\DeclareMathAlphabet\mathbfcal{OMS}{cmsy}{b}{n}
\newtcolorbox{shadedcvbox}[1][]{enhanced jigsaw,
colback=white!80!blue,
coltext={black},
boxrule=0.5pt,
arc=3mm,
auto outer arc,
boxsep=3pt,
left=4pt,
right=2pt,
bottom=2pt,
top=2pt,
#1}
\newtheorem{theorem}{Theorem}[section]
\newtheorem{algo}[theorem]{Algorithm}
\newtheorem{lemma}[theorem]{Lemma}
\theoremstyle{definition}
\newtheorem{definition}[theorem]{Definition}
\theoremstyle{remark}
\newtheorem{remark}[theorem]{Remark}
\newtheorem{example}[theorem]{Example}
\newcommand{\RR}{\mathbb{R}}
\newcommand{\uu}{\mathbf{u}} 
\newcommand{\vv}{\mathbf{v}} 
\newcommand{\bs}{\mathbf{s}} 
\newcommand{\by}{\mathbf{y}} 
\newcommand{\bom}{\mathbf{m}} 
\newcommand{\bS}{\mathbf{S}} 
\newcommand{\bU}{\mathbf{U}} 
\newcommand{\bV}{\mathbf{V}} 
\newcommand{\bY}{\mathbf{Y}} 
\newcommand{\bZ}{\mathbf{Z}} 
\newcommand{\bM}{\mathbf{M}} 
\newcommand{\bSig}{\boldsymbol{\Sigma}} 
\newcommand{\bPhi}{\boldsymbol{\Phi}} 
\newcommand{\bPsi}{\boldsymbol{\Psi}} 
\newcommand{\wtlambda}{\widetilde{\lambda}}
\newcommand{\VecMat}[2]{\mathrm{Mat}_{#1,#2}(\RR)}
\newcommand{\VecMatG}[2]{\mathrm{Mat}^{0}_{#1,#2}(\RR)}
\newcommand{\StieComp}[2]{\mathcal{S}t_{c}(#1,#2)}
\DeclareMathOperator{\tr}{tr}
\DeclareMathOperator{\Exp}{Exp}
\DeclareMathOperator{\Log}{Log}
\begin{document}
%
\title[ST Interpolation of Parametrized Rigid-Viscoplastic FEM Problems]{A Non-Intrusive Space-Time Interpolation from Compact Stiefel Manifolds of Parametrized Rigid-Viscoplastic FEM Problems}

\author{O. Friderikos}
\address[Orestis Friderikos]{ Université Paris-Saclay, ENS Paris-Saclay, CNRS,  LMT - Laboratoire de Mécanique et Technologie, 91190, Gif-sur-Yvette, France}
\email{ofriderikos@ihu.gr}

\author{M. Olive}
\address[Marc Olive]{ Université Paris-Saclay, ENS Paris-Saclay, CNRS,  LMT - Laboratoire de Mécanique et Technologie, 91190, Gif-sur-Yvette, France}
\email{marc.olive@math.cnrs.fr}

\author{E. Baranger}
\address[Emmanuel Baranger]{ Université Paris-Saclay, ENS Paris-Saclay, CNRS,  LMT - Laboratoire de Mécanique et Technologie, 91190, Gif-sur-Yvette, France}
\email{emmanuel.baranger@ens-paris-saclay.fr}

\author{D. Sagris}
\address[Dimitris Sagris]{Mechanical Engineering Department, Laboratory of Manufacturing Technology \& Machine Tools, International Hellenic University, GR-62124 Serres Campus, Greece.}
\email{dsagris@ihu.gr}

\author{C. David}
\address[Constantine David]{Mechanical Engineering Department, Laboratory of Manufacturing Technology \& Machine Tools, International Hellenic University, GR-62124 Serres Campus, Greece.}
\email{david@ihu.gr}


\maketitle
\setcounter{tocdepth}{1}
\tableofcontents
\begin{abstract}

This work aims to interpolate parametrized Reduced Order Model (ROM) basis constructed via the Proper Orthogonal Decomposition (POD) to derive a robust ROM of the system's dynamics for an unseen target parameter value. A novel non-intrusive Space-Time (ST) POD basis interpolation scheme is proposed, for which we define ROM spatial and temporal basis \emph{curves on compact Stiefel manifolds}. An interpolation is finally defined on a \emph{mixed part} encoded in a square matrix directly deduced using the space part, the singular values and the temporal part, to obtain an interpolated snapshot matrix, keeping track of accurate space and temporal eigenvectors. Moreover, in order to establish a well-defined curve on the compact Stiefel manifold, we introduce a new procedure, the so-called oriented SVD. Such an oriented SVD produces unique right and left eigenvectors for generic matrices, for which all singular values are distinct. It is important to notice that the ST POD basis interpolation does not require the construction and the subsequent solution of a reduced-order FEM model as classically is done. Hence it is avoiding the bottleneck of standard POD interpolation which is associated with the evaluation of the nonlinear terms of the Galerkin projection on the governing equations. As a proof of concept, the proposed method is demonstrated with the adaptation of rigid-thermoviscoplastic finite element ROMs applied to a typical nonlinear open forging metal forming process. Strong correlations of the ST POD models with respect to their associated high-fidelity FEM counterpart simulations are reported, highlighting its potential use for near real-time parametric simulations using off-line computed ROM POD databases.
  
\end{abstract}
\begin{table}[H]
	\begin{tabular}{p{3cm} p{4.7cm} p{6.8cm}}
		\multicolumn{3}{p{13.2cm}}{\textbf{Notations}} \\
		$\VecMat{n}{p}$ & Set of $n \times p$ matrices in $\mathbb{R}$ & \\
		\hline
		$\mathbf{I}_{p}$ &  Identity matrix in $\VecMat{p}{p}$ & \\
		\hline
		$[\by_1,\dots,\by_p]$ &  Matrix in $\VecMat{n}{p}$ & Matrix with column vectors $\by_i\in \RR^n$ \\
		\hline
		$\mathrm{O}(p)$ &  Orthogonal group on $\RR^{p}$ &  $\left\{ Q \in \VecMat{p}{p},\quad Q^T Q = \mathbf{I}_{p}  \right\}$\\
		\hline
		\hline
		$\mathcal{G}(p,n)$ & Grassmann manifold  & Set of $p$ linear subspaces in $\RR^n$ \\
		\hline
		$\pi^{-1}(\bom)$ & Fiber at $\bom \in \mathcal{G}(p,n)$ & 
		\hspace*{-0.25cm}
		\begin{tabular}{l}
		If $\by_1,\dotsc,\by_p$ is an orthonormal basis of $\bom$ \\
		$\pi^{-1}(\bom)=\left\{ \bY Q,\quad Q\in \mathrm{O}(p),\quad \bY=[\by_1,\dotsc,\by_p]\right\}$
		\end{tabular}
		\\
		\hline
		$\mathcal{T}_{\bom}:=\mathcal{T}_{\bom}\mathcal{G}(p,n)$ & Tangent space of $\mathcal{G}(p,n)$ at $\bom$ & 
		\hspace*{-0.25cm}
		\begin{tabular}{l}
			For $\bY\in \pi^{-1}(\bom)$, one model of $\mathcal{T}_{\bom}$ is \\
			$\left\{ \bZ\in \VecMat{n}{p},\quad \bZ^{T}\bY=0\right\}$
		\end{tabular}
		\\
		\hline
		\hline
		$\mathcal{S}t(p,n)$  &  Stiefel manifold & \hspace*{-0.25cm} \begin{tabular}{l} Set of ordered $p$-tuples independent\\
		vectors in $\RR^n$ \end{tabular} \\
		\hline
		$\StieComp{p}{n}$  &  Compact Stiefel manifold &  \hspace*{-0.25cm}  \begin{tabular}{l} Set of ordered $p$-tuples of orthonormal \\
			vectors in $\RR^n$ \\
		$\StieComp{p}{n}=\left\{ \bY\in \VecMat{n}{p},\quad \bY^{T}\bY=\mathbf{I}_{p}\right\}$ 
		\end{tabular}
		\\
		\hline
		$\text{Hor}_{\bY}$ & Horizontal space at $\bY$ & $\text{Hor}_{\bY}:=\left\{ \bZ\in \VecMat{n}{p},\quad \bZ^{T}\bY =\mathbf{0} \right\}$ \\
		\hline
		 $v\in \mathcal{T}_{\bom}$ & Velocity vector on the tangent plane $\mathcal{T}_{\bom}$ & Represented by a horizontal lift $\bZ\in\text{Hor}_{\bY}$, with $\bY\in \pi^{-1}(\bom)$ \\
		\hline
		$\bS^{(i)}$ & Snapshot matrix & $\bS^{(i)} \in \VecMat{n}{m}$ corresponding to parameter value $\lambda_{i}$\\	
		\hline
	\end{tabular}
\end{table}

\section{Introduction}

Computational metal forming has been widely used in academic laboratories and the manufacturing industry over the last decades, becoming nowadays a mature, well established technology. Nevertheless, new challenging fields are emerging, among others, uncertainty quantification, optimization of processes and parameter identification in design analysis~\cite{Chenot1992,Gronostajski2019}. One of the key challenging topics mentioned in~\cite{Chenot1992} is the introduction of Model Order Reduction (MOR) methods to combat the high computational cost, which is also of paramount interest in the above-mentioned fields. Moreover, due to the multiple sources of strong non-linearities inherent in manufacturing problems,  design optimization and multi-parametric studies of large scale models turns out to be prohibitively expensive. Indeed, simulation of complex configurations can be intractable since the computational times can highly increase.

To this end, meta-model techniques are often used to tackle the computational burden. These rely on a manifold learning stage during which we need to capture the original space where the solution of the model problem lies. This data collection  consists of solving the full-scale model for an ensemble of training data over the parametric range and is commonly referred to as the offline stage. 
Even though meta-models can speed up the simulation time, nevertheless their construction with standard computations based on full-order models is expensive. 

Closely related to the concept of metamodeling, Reduced Order Models (ROMs) have been chosen to reduce the problem's dimensionality while at the same time maintaining solution accuracy. ROMs can decrease the computational complexity of large-scale systems, solving parametrized problems and offering the potential for near real-time analysis.
The methods for building ROMs can be classified into two general families: \emph{a priori} and \emph{a posteriori} ones. The well known a priori MOR  includes methods such as the Proper Generalized Decomposition (PGD)~\cite{Chinesta2011}, and the \emph{a priori} reduction method (APR)~\cite{Chinesta2011,Allery2011}. The main characteristic of all these methods is that they do not require any precomputed ROMs. 
In the second class of methods, the reduced basis is built, \emph{a posteriori}, from the state variable snapshots in the parametric space. One popular method is the POD~\cite{holmes2012turbulence,Henri2005,Aubry1991}, also known as Kharhunen-Lo\`eve Decomposition (KLD)~\cite{Karhunen1946,Loeve1977}, Singular Value Decomposition (SVD)~\cite{golub1996} or Principal Component Analysis (PCA)~\cite{Jolliffe2002,Abdi2010,Jackson1980,Jackson1981}.

For nonlinear systems, even though a Galerkin projection reduces the number of unknowns, however, the computational burden for obtaining the solution could still be high due to the computational costs involved in the evaluation of nonlinear terms. Hence, the nonlinear Galerkin projection principally leads to a ROM, but its evaluation could be more expensive than the corresponding one of the original problem. To this effect, to make the resulting ROMs computationally efficient, a sparse sampling method is used, also called hyper reduction, to mention among others, the missing point estimation (MPE)~\cite{Astrid2008}, the empirical interpolation method (EIM)~\cite{Radermacher2015}, the discrete empirical interpolation method (DEIM)~\cite{Chaturantabut2010}, the Gappy POD method~\cite{Everson1995}, and the Gauss-Newton with approximated tensors (GNAT) method~\cite{Carlberg2013}. Thus, all these methods imply the solution of a new ROM FEM problem.

In the case of a parametric analysis using POD basis interpolation on Grassmann manifolds~\cite{Amsallem2009,Mosquera2018}, the method starts with a training stage during which the problem is solved for several training points. Then, using the FEM solutions, the full-order field `snapshots' are  compressed using the POD to generate a ROM that is expected to reproduce the most characteristic dynamics of its high-fidelity counterpart solution. However, the relevant information is contained in the vector spaces generated by the (left or right) singular vectors of the snapshot matrices. Now, for a new parameter value, interpolation methods have to be defined from such relevant \emph{subspaces} spanned by the POD basis vectors~\cite{Amsallem2009}. Other approaches obviously could  be considered, such as interpolations computed on the space of matrices of a fixed rank, whereby the mechanical origin of the problem imposes to consider the vector subspaces, and not the matrices themselves~\cite{Mosquera2018}. Nevertheless, such methods as the one of interpolation between two positive semidefinite matrices of fixed rank~\cite{Bonnabel2010}, may not capture the important elements obtained from the mechanical equations.

To interpolate between different vector spaces of the same dimension (encoded into the mode $p$ of the POD), a Grassmann manifold~\cite{Lee2013} must be used, which is the set of $p$-dimensional subspaces of $\RR^n$. Such a manifold is in fact a Riemannian manifold~\cite{Gallot1990}, so we can construct \emph{geodesics} between two points, and use such geodesics to define a logarithm map to \emph{linearize}, and conversely using the exponential map to return back to the Grassmann manifold. While an interpolation cannot be done directly on Grassmann manifolds, linearization allows computing such an interpolation, at least locally once a reference point has been selected~\cite{Amsallem2009,Mosquera2018}. To any new parameter value, thus we get a new subspace obtained from  interpolation between all subspaces related to the spatial eigenvectors of the snapshot matrices.
Another approach using inverse distance weighting was initiated in~\cite{Mosquera2018,Mosquera2019b}, but it also relies on several choices (as one of the weights). Furthermore, an extension of Neville-Aitken’s algorithm to Grassmann manifolds which computes the Lagrange interpolation polynomial in a recursive way from the interpolation of two points was recently presented~\cite{Muhlbach1978}. 

In the standard POD interpolation mentioned above~\cite{Amsallem2009}, the  spatial ROM basis corresponding to the target point is used to generate a ROM FEM, which is expected to have a lower computational cost compared to the high-fidelity problem. The key idea in the \emph{Space-Time} (ST) POD basis interpolation proposed by~\cite{Lu2018,Oulghelou2021}, is that the reduced spatial and temporal basis are considered separately, both defining points on two different Grassmann manifolds. However, such points are strongly related: a spatial vector directly corresponds to a temporal vector, and \emph{vice versa}. From this, firstly we need to consider the $p$-tuples of spatial (and temporal) vectors, instead of the $p$-dimensional subspace, which defines points on an associated \emph{compact Stiefel manifold}, strongly connected to Grassmann manifolds. Contrary to what is suggested in~\cite{Oulghelou2021}, we propose a different interpolation scheme, as we do not perform interpolation of the singular values, followed by spatial and temporal calibration. Instead, we  exploit the dependence between the spatial and temporal parts. Indeed, using an interpolation algorithm defined on a Grassmann manifold, we derive \emph{curves on a compact Stiefel manifold}, which are no longer interpolating, but which nevertheless allow us to obtain new singular vectors for the spatial part, and separately for the temporal part. Such space and temporal singular vectors finally are  taken to define a \emph{mixed part} on which a classical interpolation can be computed. In the end, we get in this way a ROM matrix corresponding to a new parameter value. Note that in order to obtain a well-defined curve on compact Stiefel manifolds, we have to introduce a new procedure, the so-called \emph{oriented SVD}. Such an oriented SVD produces unique right and left eigenvectors for snapshot matrices, supposed to be \emph{generic matrices}, for which all non--zero singular values are  distinct.

The off-line stage in the ST approach consists  of solving FEM problems which are corresponding to the training points of the given parameter. The on-line stage concerns the use of a curve defined on a compact Stiefel manifold to determine the spatial and temporal ROM basis for the target point, in order to construct the related ROM snapshot matrix. In fact, the ST interpolation offers the advantage of reconstructing a snapshot matrix without relaunching ROM FEM computations. To this end, it results in near-real-time solutions due to direct matrix multiplications in the on-line stage.

We could also mention some other ST approaches~\cite{Shinde2016,Audouze2009,Choi2019,Choi2021}, where neither Grassmann nor compact Stiefel manifolds are considered. For instance, an approximation of the spatial and temporal basis functions by linear interpolation of their modes is proposed in~\cite{Shinde2016} to study the flow past a cylinder at low Reynolds numbers. A non-intrusive ROM approach for nonlinear parametrized time-dependent PDEs based on a two-level POD method by using Radial Basis Functions interpolation is presented in~\cite{Audouze2009,Audouze2013}.

The method proposed in this work is applied to a coupled thermomechanical   rigid visco-plastic (RVP) FEM analysis based on an incremental implicit approach~\cite{Kobayashi1989,Lee1973,Kobayashi1977,Feng1996}. Note that the RVP formulation specifically is  tailored for metal forming simulations, where the plastic flow is unconstrained and usually of finite magnitude, involving  large strain-rates and high temperatures. In the present study, all simulations are performed by using an in-house Matlab code which consists of two independent FEM solvers. A mechanical solver for the viscoplastic deformation analysis~\cite{Friderikos2011} and a thermal solver for the heat transfer analysis. A staggered procedure is used to solve the system of coupled equations.

The paper is organized as follows: in~\autoref{sec:POD}, the Proper Orthogonal Decomposition is presented, followed by an introduction to some basic notions about the geometry of the Grassmann and Stiefel manifolds to make the article reasonable self-contained. POD basis interpolation on Grassmannian manifolds is introduced considering the underlying  formulation of the logarithm and the exponential map. The core of this paper is illustrated in~\autoref{sec:ST_Interpolation}, where the computational framework for the ROM adaptation based on a novel non-intrusive Space-Time POD basis interpolation on compact Stiefel manifolds is developed. The following~\autoref{sec:Rigid-Viscoplastic_FEM} covers the rigid visco-plastic formulation, the general framework of the thermal field equations, and the thermomechanical coupling. In~\autoref{sec:Numerical_Investigations}, the interpolation performance applied to a metal forming process is shown, as well as further computational aspects are discussed. Finally,~\autoref{sec:Conclusions} highlights the main results and some important outcomes. 

\section{Space--Time POD, Grassmann and compact Stiefel manifolds}\label{sec:POD}

Let us recall here the important link between Proper Orthogonal Decomposition and Grassmann manifold~\cite{Amsallem2009,Edelman1998g,Absil2004,Mosquera2018,Mosquera2019b}. 

Assume $\bS\in \VecMat{n}{m}$ to be any real matrix of size $n\times m$ (with $n\geq m$), taken here to be a snapshot matrix with $n=3N_S$ obtained from the spatial discretization $N_s$, and $m=N_t$ obtained from the time one. Any spatial POD of mode $p$ leads to a $p$-dimensional vector space $\mathcal{V}_p\subset \RR^{m}$ such that the Frobenius norm
\begin{linenomath}
\begin{equation*}
\|\bS-\boldsymbol{\Pi}_{p}\bS\|_{\text{F}}^{2}
\end{equation*} 
\end{linenomath}
is minimal, where matrix $\boldsymbol{\Pi}_{p}$ corresponds to the orthogonal projection on $\mathcal{V}_p$ (see \cite{Mosquera2018} for more details). Such a matrix $\boldsymbol{\Pi}_{p}$ is directly obtained from a Singular Value Decomposition (SVD) of $\bS$. Indeed, let us write a SVD
\begin{linenomath} \begin{equation*}
	\bS=\bPhi \bSig \bPsi^T
\end{equation*} \end{linenomath}
with $\bPhi=[\phi_1,\dots,\phi_r]$ and $\bPsi=[\psi_1,\dots,\psi_r]$, where the columns $\phi_k\in \RR^n$ and $\psi_k\in \RR^m$ form a set of orthonormal vectors, and $\bSig \in \VecMat{r}{r}$ is a diagonal matrix, where $r$ denotes the rank of $\bS$. Then, we can define $\bPhi_{p}:=[\phi_1,\dots,\phi_p]\in\VecMat{n}{p}$ and we obtain $\boldsymbol{\Pi}_{p}=\bPhi_{p}\bPhi_{p}^{T}$.

In this classical approach, the relevant object is not the reduced matrix $\bS_p:=\boldsymbol{\Pi}_{p}\bS$, supposed to be of maximal rank, but the $p$-dimensional vector space $\mathcal{V}_p$ spanned by vectors $\phi_1,\dots,\phi_p$, and thus the image of the matrix $\bPhi_{p}$. From this, interpolation has to be considered on the set of all $p$-dimensional vector spaces, that is on the so--called \emph{Grassmann manifold} $\mathcal{G}(p,n)$:
\begin{linenomath} \begin{equation*}
	\mathcal{G}(p,n):=\left\{\mathcal{V}_p\subset \RR^n,\quad \dim(\mathcal{V}_p)=p\right\}.
\end{equation*} \end{linenomath}
	
Note here that the \emph{point} $\bom:=\mathcal{V}_p\in \mathcal{G}(p,n)$   defines a vector space spanned by the set $\phi_1,\dots,\phi_p$  \emph{represented} by matrix $\bPhi_{p}$, however this matrix representation is not unique (see Example~\ref{ex:Point_on_Grassman}).
 
Take now a set $\{\lambda_{1},\dotsc,\lambda_{N}\}$ of parameter values leading to snapshot matrices $\bS^{(1)},\dotsc,\bS^{(N)}$ with SVD
\begin{linenomath} \begin{equation*}
		\bS^{(k)}=\bPhi^{(k)} \bSig^{(k)} \bPsi^{(k)},\quad \bPhi^{(k)}=[\phi_1^{(k)},\dots,\phi_{r}^{(k)}],\quad \bPsi^{(k)}=[\psi_1^{(k)},\dots,\psi_r^{(k)}],
\end{equation*} \end{linenomath}
where $\phi_i^{(k)}$ are orthonormal vectors in $\RR^n$ and $\psi_j^{(k)}$ are  orthonormal vectors in $\RR^m$.

The classical approach~\cite{Amsallem2009,Mosquera2018} then considers the spatial POD of the snapshot matrices $\bS_p^{(1)},\dotsc,\bS_p^{(N)}$ of mode $p$, so that we obtain points $\bom_{i}$ ($i=1,\dotsc,N$) on $\mathcal{G}(p,n)$, respectively represented by the matrices 
\begin{linenomath} \begin{equation*}
	\bPhi_{p}^{(k)}:=[\phi_1^{(k)},\dots,\phi_p^{(k)}]\in \VecMat{n}{p},\quad \left(\bPhi_{p}^{(k)}\right)^T \bPhi_{p}^{(k)}=\mathbf{I}_p.
\end{equation*} \end{linenomath}

To any new parameter value $\wtlambda$, it is possible to make an interpolation considering the spatial part based on the points $\bom_i \in \mathcal{G}(p,n)$, using a local chart given by normal coordinates~\cite{Amsallem2009,Mosquera2018,Mosquera2019b}, in order to obtain a point $\widetilde{\bom}\in \mathcal{G}(p,n)$ represented by a matrix $\widetilde{\bPhi}$. From such a point $\widetilde{\bom}\in\mathcal{G}(p,n)$, we deduce a $p$-dimensional vector space on which some POD-Galerkin approach~\cite{Mosquera2018} can lead to a new ROM model.

On the contrary, we propose  another approach as we consider a \emph{Space--Time interpolation}, using both the spatial vector spaces represented by matrices $\bPhi_{p}^{(k)}$ and the temporal vector spaces represented by matrices
\begin{linenomath} \begin{equation*}
	\bPsi_{p}^{(k)}:=[\psi_1^{(k)},\dots,\psi_p^{(k)}]\in \VecMat{m}{p},\quad \left(\bPsi_{p}^{(k)}\right)^T \bPsi_{p}^{(k)}=\mathbf{I}_p.
\end{equation*} \end{linenomath} 
An important observation now is that matrices $\bPhi_{p}^{(k)}$ (resp. $\bPsi_{p}^{(k)}$) directly define an \emph{ordered $p$-tuple of orthonormal vectors} in $\RR^n$ (resp. $\RR^m$), that is a point on the \emph{compact Stiefel manifold}
\begin{linenomath} \begin{equation*}
	\StieComp{p}{n}:=\left\{\text{Ordered orthonormal } p \text{-tuple of vectors in } \RR^n\right\}.
\end{equation*} \end{linenomath}
To obtain a Space-Time POD interpolation (instead of a spatial POD interpolation followed by Galerkin approach), we finally adopted the following strategy, when dealing with a parameter value $\widetilde{\lambda}$:
\begin{enumerate}
	\item Define a \emph{curve} on the compact Stiefel manifold corresponding to the spatial part
		\begin{linenomath} \begin{equation*}
			\lambda\mapsto \bPhi(\lambda)\in  \StieComp{p}{n}
		\end{equation*} \end{linenomath} 
		obtained using the already known interpolation algorithm on Grassmann manifold.
	\item In the same way, define a \emph{curve} on the compact Stiefel manifold corresponding to the temporal part
	\begin{linenomath} \begin{equation*}
		\lambda\mapsto \bPsi(\lambda)\in  \StieComp{p}{m}.
	\end{equation*} \end{linenomath} 
	\item Construct an \emph{interpolated curve} $\lambda\mapsto \bS(\lambda)$ passing through the POD of mode $p$ snapshot matrices $\bS_p^{(k)}$, in order to obtain an interpolation of a ROM matrix $\widetilde{\bS}:=\bS(\widetilde{\lambda})$.
\end{enumerate}
In the next subsections, we give all important details to obtain such an interpolated curve $\lambda\mapsto \bS(\lambda)$. First, in~\autoref{subsec:Rieman_Geo_Grass} we explain how to compute on Grassmann manifolds using their Riemannian structure to obtain explicit formulae for the \emph{geodesics} defining \emph{normal coordinates}. From this explicit formulae, we can deduce in~\autoref{subsec:Target_Algo_Stiefel} a \emph{target algorithm} in order to define the curves 
\begin{linenomath} \begin{equation*}
	\lambda\mapsto \bPhi(\lambda)\in  \StieComp{p}{n},\quad \lambda\mapsto \bPsi(\lambda)\in  \StieComp{p}{m}
\end{equation*} \end{linenomath}
on compact Stiefel manifolds. The question on how to define an interpolated curve for matrices $\bS_p^{(k)}$ will then be addressed in \autoref{sec:ST_Interpolation}.

\subsection{Riemannian geometry on Grassmann manifolds}%
\label{subsec:Rieman_Geo_Grass}

We will summarize now some essential results about Grassmann manifolds. Such manifolds are in fact \emph{complete Riemannian manifolds}~\cite{Gallot1990}, meaning for instance that we can define the \emph{length} of a curve. Moreover, we can always construct a curve of the shortest length between two points, which is called a \emph{geodesic}, and it will be the starting point to define \emph{normal coordinates} via the exponential and logarithm map (Definition~\ref{def:Exp_Map} and~\ref{def:Log_map_Grass}). As we cannot do direct computations on Riemann manifolds, normal coordinates enable us to obtain formulae of curves, such as the Lagrangian polynomials.
Note finally that a rigorous mathematical background of all of this is given in~\cite{Kozlov1997}. 

After we give a definition of the Grassmann manifold and how to \emph{represent} its points with matrices, we propose to define the \emph{tangent plane} using matrix representative, to have formulae for a scalar product, given by~\eqref{eq:rieman_scalar_product_grass}. From this, we deduce a classical expression for geodesics (Theorem~\ref{thm:Eq_Geo_Grass}).

Let $p\leq n$ be two non-zero integers and $\mathcal{G}(p,n)$ the Grassmann manifold of $p$-dimensional subspaces in $\RR^{n}$. In fact, Grassmann manifolds are special cases of \emph{quotient manifolds}, meaning that a point on such a manifold can have many \emph{representatives}. Let us consider indeed a $p$-dimensional linear subspace $\mathcal{V}$ of $\RR^n$. Such a subspace can be defined using any ordered set of $p$ independent vectors $\vv_1,\dots,\vv_p$ in $\RR^n$, encoded into a full rank matrix
\begin{linenomath} \begin{equation*}
	\mathbf{M}:=[\vv_1,\dots,\vv_p]\in \VecMat{n}{p}.	
\end{equation*} \end{linenomath}
Any other basis $\vv'_1,\dots,\vv'_p$ of $\mathcal{V}$ will then lead to another full rank matrix
\begin{linenomath} \begin{equation*}
	\mathbf{M}':=[\vv'_1,\dots,\vv'_p]\in \VecMat{n}{p},	
\end{equation*} \end{linenomath} 
and we necessary have
\begin{linenomath} \begin{equation*}
	\mathbf{M}'=\mathbf{M}\mathrm{P}
\end{equation*} \end{linenomath}
where $\mathrm{P}\in \mathrm{GL}(p)$ is some invertible matrix in $\VecMat{p}{p}$. From all this, we deduce that the point $\bom:=\mathcal{V}\in \mathcal{G}(p,n)$ is represented by the infinite set of matrices
\begin{linenomath} \begin{equation*}
	\left\{ \bM\mathrm{P},\quad \mathrm{P}\in \mathrm{GL}(p)\right\}.
\end{equation*} \end{linenomath}

Now, the ordered set of $p$ independent vectors in $\RR^n$ and thus the set of full rank matrices in $\VecMat{n}{p}$ define the Stiefel manifold (see Figure~\ref{fig:Two_Points_Stiefel})	
\begin{linenomath} \begin{equation*}
	\mathcal{S}t(p,n):=\left\{\mathbf{M}=[\vv_1,\dots,\vv_p]\in \VecMat{n}{p},\quad \text{rg}(\mathbf{M})=p\right\}
\end{equation*} \end{linenomath} 
so that we obtain a natural map from such Stiefel manifold and the Grassmann manifold $\mathcal{G}(p,n)$ (see Figure~\ref{fig:Two_Points_Grassmann}):
\begin{linenomath} \begin{equation*}
	\mathbf{M}=[\vv_1,\dots,\vv_p]\in \mathcal{S}t(p,n)\mapsto \bom=\left\{ \bM\mathrm{P},\quad \mathrm{P}\in \mathrm{GL}(p)\right\}.
\end{equation*} \end{linenomath}
In our situation, nevertheless, we will only focus on \emph{orthonormal bases} of $p$-dimensional subspaces. Doing so, we thus consider matrices defined by orthonormal vectors, leading to the so-called \emph{compact Stiefel manifold} 
\begin{equation}\label{eq:Stief_Comp}
\StieComp{p}{n}:=\left\{ \bY\in \VecMat{n}{p},\quad \bY^{T}\bY=\mathbf{I}_{p}\right\}
\end{equation}
and any point $\bom\in \mathcal{G}(p,n)$ will then be represented by the infinite set
\begin{linenomath} \begin{equation*}
	\left\{ \bY \mathrm{Q},\quad \mathrm{Q}\in \mathrm{O}(p)\right\}
\end{equation*} \end{linenomath}
where $\bY=[\by_1,\dots,\by_p]$ is defined using an orthonormal basis $\by_1,\dots,\by_p$ of $\bom$. This defines a \emph{surjective} map
\begin{linenomath} \begin{equation*}
\pi \: : \: \bY\in \StieComp{p}{n}\mapsto \bom= \pi(\bY)=\left\{ \bY \mathrm{Q},\quad \mathrm{Q}\in \mathrm{O}(p)\right\}\in \mathcal{G}(p,n)
\end{equation*} \end{linenomath}
and the set of all matrices representing the same point $\bom\in \mathcal{G}(p,n)$ is called the \emph{fiber} of $\pi$ at $\bom$ (see Figure~\ref{fig:Grasmann_Quotient} for an illustration of a fiber): 
\begin{linenomath} \begin{equation*}
\pi^{-1}(\bom)=\left\{ \bY Q,\quad Q\in \mathrm{O}(p)\right\}.
\end{equation*} \end{linenomath}

\begin{remark}
An important point here is that, from now on, any computation on $\mathcal{G}(p,n)$ will be done using a \emph{choice in the fibers}. Nevertheless, for any point $\bom\in \mathcal{G}(p,n)$, \emph{there is no canonical} way to choose an element $\bY\in \pi^{-1}(\bom)$, so any computation has to be independent of that choice.  
\end{remark}

We need now to define the \emph{geodesics} of Grassmann manifold, which can be done once we have defined the \emph{tangent plane} at each point $\bom\in \mathcal{G}(p,n)$ and a Riemaniann metric. Take any point $\bom \in \mathcal{G}(p,n)$ represented by a matrix $\bY=[\by_1,\dots,\by_p]$ of orthonormal vectors, the tangent plane $\mathcal{T}_{\bom}:=\mathcal{T}_{\bom}\mathcal{G}(p,n)$ is then represented by the $p(n-p)$ dimensional vector space
\begin{equation}\label{eq:Hor_Space}
	\text{Hor}_{\bY}:=\left\{ \bZ\in \VecMat{n}{p},\quad \bZ^{T}\bY=0\right\},
\end{equation}
called the \emph{horizontal space}, where $\bY\in \pi^{-1}(\bom)$. From all this, a vector $v\in \mathcal{T}_{\bom}$ will be called a \emph{velocity vector}, which can be \emph{represented} by a matrix $\bZ\in \VecMat{n}{p}$ such that $\bZ^{T}\bY=0$, and $\bZ$ is called a \emph{horizontal lift} of $v$.

\begin{example}\label{ex:Point_on_Grassman}
	Take here $p=2$ and $n=5$, so that $\mathcal{G}(2,5)$ is the set of planes in a five dimensional space. The matrices
	\begin{linenomath} \begin{equation*}
		\bY=\begin{bmatrix}
		\frac{1}{2} & 0 \\
		-\frac{1}{2} & \frac{\sqrt{2}}{2} \\
		0 & 0 \\
		\frac{1}{2} & \frac{\sqrt{2}}{2} \\
		\frac{1}{2} & 0 \\
		\end{bmatrix},
		\quad
		\bY'=\left[ \begin {array}{cc} 
		\frac{\sqrt {2}}{4}&-\frac{\sqrt {2}}{4} \\ 
		\frac{2-\sqrt {2}}{4}&\frac{2+\sqrt {2}}{4} \\ 
		0&0 \\ 
		\frac{2+\sqrt {2}}{4} &\frac{2-\sqrt {2}}{4} \\ 
		\frac{\sqrt {2}}{4}&-\frac{\sqrt {2}}{4}
		\end {array} \right] 
	\end{equation*} \end{linenomath}
are in the compact Stiefel manifold $\StieComp{2}{5}$, representing the same plane $\bom\in \mathcal{G}(2,5)$. The horizontal space $\text{Hor}_{\bY}$ defined by~\eqref{eq:Hor_Space} is a 6-dimensional vector space of matrices $\bZ$, for instance given by
\begin{linenomath} \begin{equation*}
	\bZ=\small\begin{bmatrix}
	u_1 & v_1 \\
	u_2 & v_2 \\
	u_3 & v_3 \\
	-u_2 & -v_2 \\ 
	-u_1+u_2-u_4\hspace*{0.5cm} & -v_1+v_2-v_4
	\end{bmatrix},\quad u_i,v_i\in \RR.
\end{equation*} \end{linenomath} 	
\end{example}

Taking now velocity vectors $v_1,v_2\in \mathcal{T}_{\bom}$ with respective horizontal lifts $\bZ_1,\bZ_2\in \text{Hor}_{\bY}$ we define the point--wise scalar product~\cite{Wong1967,Absil2004}:
\begin{equation}\label{eq:rieman_scalar_product_grass}
\langle v_1,v_2\rangle_{\bom}:=\tr\left(\bZ_1^{T}\bZ_2\right).
\end{equation}

Such a Riemannian metric leads to explicit geodesics given by~\cite{Absil2004,Edelman1998g}:
\begin{theorem}\label{thm:Eq_Geo_Grass}
Let $\bom\in \mathcal{G}(p,n)$ represented by $\bY\in \StieComp{p}{n}$.  For any $v\in \mathcal{T}_{\bom}$ with horizontal lift given by $\bZ$ in $\text{\emph{Hor}}_{\bY}$, let $\bZ=\bU\bSig\bV^{T}$ be a thin SVD of $\bZ$. Then 
\begin{equation}\label{eq:geo_v}
\alpha_{v} : \: t\in \RR\mapsto \alpha_{v}(t):=\pi\left[\left(\bY\bV\cos(t\bSig)+\bU\sin(t\bSig)\right)\bV^{T}\right]  \in \mathcal{G}(p,n) \\
\end{equation}
is the unique maximal geodesic such that $\alpha_{v}(0)=\bom$ and initial velocity
\begin{linenomath} 
\begin{equation*}
 	\dot{\alpha_{v}}(0):=\frac{\partial \alpha_{v}(t)}{\partial t}\mid_{t=0}=v. 
\end{equation*}
\end{linenomath} 
\end{theorem}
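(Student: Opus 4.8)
The plan is to verify directly that the curve $\alpha_v(t)$ defined by~\eqref{eq:geo_v} satisfies the three defining properties: it lies in $\mathcal{G}(p,n)$ (i.e. the bracketed matrix is a legitimate representative, which requires checking it has orthonormal columns), it passes through $\bom$ at $t=0$ with velocity $v$, and it solves the geodesic equation of the Riemannian metric~\eqref{eq:rieman_scalar_product_grass}. Uniqueness then follows from the standard ODE uniqueness for geodesics on a complete Riemannian manifold, so the real content is the first three checks.

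First I would set $\bM(t):=\bigl(\bY\bV\cos(t\bSig)+\bU\sin(t\bSig)\bigr)\bV^{T}$ and check $\bM(t)^{T}\bM(t)=\mathbf{I}_p$, so that $\bM(t)\in\StieComp{p}{n}$ and $\alpha_v(t)=\pi(\bM(t))$ is well-defined. This uses $\bY^{T}\bY=\mathbf{I}_p$, $\bU^{T}\bU=\mathbf{I}_r$, $\bV^{T}\bV=\mathbf{I}_r$ (thin SVD of $\bZ$), the horizontality relation $\bZ^{T}\bY=\mathbf{0}$ which forces $\bU^{T}\bY=\mathbf{0}$ on the relevant block, and the Pythagorean identity $\cos^2(t\bSig)+\sin^2(t\bSig)=\mathbf{I}$ for the diagonal matrix $t\bSig$; expanding the product the cross terms cancel by orthogonality and what remains is $\bV\bigl(\cos^2(t\bSig)+\sin^2(t\bSig)\bigr)\bV^{T}=\bV\bV^{T}$, which acts as the identity on the row space — one has to be slightly careful here with the thin-SVD convention (whether $\bV\bV^T=\mathbf{I}_p$ or only a projector) and pad to full rank if needed. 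Then $\bM(0)=\bY\bV\bV^{T}=\bY$ (again modulo the thin-SVD convention, $\bV\bV^T=\mathbf{I}_p$ when $\bZ$ has full column rank $p$), so $\alpha_v(0)=\pi(\bY)=\bom$. Differentiating, $\dot{\bM}(t)=\bigl(-\bY\bV\bSig\sin(t\bSig)+\bU\bSig\cos(t\bSig)\bigr)\bV^{T}$, so $\dot{\bM}(0)=\bU\bSig\bV^{T}=\bZ$, and since $\bZ\in\text{Hor}_{\bY}$ is by definition the horizontal lift of $v$, we get $\dot{\alpha_v}(0)=v$.

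The main obstacle is the geodesic equation itself: one must show $\alpha_v$ has zero covariant acceleration for the Levi-Civita connection of~\eqref{eq:rieman_scalar_product_grass}. I would argue via the horizontal lift: a curve in $\mathcal{G}(p,n)$ is a geodesic iff its horizontal lift $\bM(t)$ in $\StieComp{p}{n}$ (for the submersion $\pi$, which is Riemannian for the metric induced from $\tr(\bZ_1^T\bZ_2)$) is a geodesic of the compact Stiefel manifold whose velocity stays horizontal, equivalently iff $\ddot{\bM}(t)$ is everywhere normal to $\StieComp{p}{n}$, i.e. $\ddot{\bM}(t)=\bM(t)\Lambda(t)$ for some symmetric matrix $\Lambda(t)$. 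Computing $\ddot{\bM}(t)=-\bigl(\bY\bV\bSig^2\cos(t\bSig)+\bU\bSig^2\sin(t\bSig)\bigr)\bV^{T}=-\bM(t)\,(\bV\bSig^2\bV^{T})$, and since $\bV\bSig^2\bV^{T}$ is symmetric and $t$-independent, this is exactly of the required form; one also checks $\dot{\bM}(t)^T\bM(t)=\mathbf{0}$ stays satisfied (horizontality is preserved), which follows from the same orthogonality relations. I expect the bookkeeping of the thin-SVD dimensions ($r=\mathrm{rg}(\bZ)\le p$ versus full column rank) and invoking the precise characterization "geodesic downstairs $\Leftrightarrow$ horizontal geodesic upstairs $\Leftrightarrow$ acceleration normal to the constraint manifold" to be where care is needed; the trigonometric computations themselves are routine. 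Finally, uniqueness and maximality: geodesics through a given point with given initial velocity are unique by the Picard–Lindelöf theorem applied to the geodesic ODE, and since the formula is defined and smooth for all $t\in\RR$, the geodesic is maximal (consistent with completeness of $\mathcal{G}(p,n)$).
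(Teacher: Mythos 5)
Your proposal is correct in outline, but note that the paper itself does not prove this theorem: the geodesic formula is quoted from the cited references (Absil--Mahony--Sepulchre, Edelman--Arias--Smith), and the only verifications carried out in the paper are the two remarks flanking the statement, namely that $\bY(t):=\left(\bY\bV\cos(t\bSig)+\bU\sin(t\bSig)\right)\bV^{T}$ lies in $\StieComp{p}{n}$ (via $\bZ^{T}\bY=0\Rightarrow\bSig\bU^{T}\bY=0$, hence $\sin(t\bSig)\bU^{T}\bY=0$) and that $\dot{\bY}(0)=\bU\bSig\bV^{T}=\bZ$; these coincide with the first half of your argument, including your hedging about the rank-deficient case, which is resolved exactly by the $\sin(t\bSig)\bU^{T}\bY=0$ trick rather than by $\bU^{T}\bY=0$. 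What you add, and what the paper delegates to the literature, is the geodesic property itself: the computation $\ddot{\bY}(t)=-\bY(t)\,\bV\bSig^{2}\bV^{T}$, the observation that this acceleration lies in the normal space $\{\bY(t)S:\ S=S^{T}\}$ of the compact Stiefel manifold embedded with the trace metric, the check $\bY(t)^{T}\dot{\bY}(t)=0$ that the velocity stays horizontal, and the O'Neill Riemannian-submersion principle that a horizontal geodesic of $\StieComp{p}{n}$ projects to a geodesic of $\mathcal{G}(p,n)$, with uniqueness and maximality coming from the geodesic ODE and the global-in-$t$ formula. This is essentially the standard proof found in the cited sources, so your route is a legitimate, self-contained alternative to the paper's citation; the two points you should make explicit rather than leave implicit are (i) that $\pi$ is a Riemannian submersion for the metric of~\eqref{eq:rieman_scalar_product_grass}, i.e.\ that $\text{Hor}_{\bY}$ is the orthogonal complement, inside the tangent space of $\StieComp{p}{n}$ at $\bY$, of the vertical space $\{\bY A,\ A^{T}=-A\}$, and that the resulting base metric does not depend on the representative in the fiber, and (ii) that normality of $\ddot{\bY}$ alone only says the curve is a Stiefel geodesic, so the separate horizontality check you perform is genuinely needed before invoking the projection step.
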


\begin{remark}
Up to our knowledge, there is no proof that 
\begin{equation}\label{eq:Geo_path}
\bY(t):=\left(\bY\bV\cos(t\bSig)+\bU\sin(t\bSig)\right)\bV^{T}\in \StieComp{p}{n}.
\end{equation} 
In fact, this follows by direct computation. Indeed, $\bZ=\bU\bSig\bV^{T}$ being a thin SVD, we have $\bV\in \mathrm{O}(p)$ and
\begin{linenomath} \begin{equation*}
\bZ^{T}\bY=\bV\bSig\bU^{T}\bY=0\implies \bSig\bU^{T}\bY=0
\end{equation*} \end{linenomath}
so that 
\begin{linenomath} \begin{equation*}
\sin(t\bSig)\bU^{T}\bY=0 \text{ and } \bY^{T}\bU\sin(t\bSig)=0.
\end{equation*} \end{linenomath}
Finally, we have: 
\begin{multline*}
\bY^{T}(t)\bY(t)=
\bV\left( \cos^{2}(t\bSig)+\underbrace{\sin(t\bSig)\bU^{T}\bY}_{=0}\bV\cos(t\bSig)+\right. \\
\left.\cos(t\bSig)\underbrace{\bV^{T}\bY^{T}\bU\sin(t\bSig)}_{=0}+\sin^2(t\bSig)\right)\bV^{T}
\end{multline*}
which concludes the proof.
\end{remark}

\begin{remark}\label{rem:Exp_in_Stief}
In many cases, formulas of the geodesic do not use the right multiplication by $\bV^{T}$, as for instance in~\cite{Absil2004,Mosquera2018}. Of course, as $\bV$ being in $\mathrm{O}(p)$ both matrices \begin{linenomath} \begin{equation*}
\left(\bY\bV\cos(t\bSig)+\bU\sin(t\bSig)\right)\bV^{T} \text{ and } \bY\bV\cos(t\bSig)+\bU\sin(t\bSig)
\end{equation*} \end{linenomath}
define the same point on $\mathcal{G}(p,n)$. Now, the choice of such right multiplication in~\eqref{eq:Geo_path} is related to the choice of the horizontal lift $\bZ=\bU\bSig\bV^{T}$. Indeed, taking back the path given by~\eqref{eq:Geo_path}, we have
\begin{linenomath} \begin{equation*}
\dot{\bY}(t)=\left(-\bY\bV\bSig\sin(t\bSig)+\bU\bSig\cos(t\bSig)\right)\bV^{T}\implies \dot{\bY}(0)=\bU\bSig\bV^{T}=\bZ
\end{equation*} \end{linenomath}
which corresponds to the choice of the horizontal lift for velocity vector $v\in \mathcal{T}_{\bom}$.
\end{remark}
A consequence of Theorem~\ref{thm:Eq_Geo_Grass} is an explicit formula for the exponential map~\cite{Absil2004,Mosquera2018} (see Figure~\ref{fig:Log_Exp_map}):
\begin{definition}\label{def:Exp_Map}
Let $\bom\in \mathcal{G}(p,n)$ be represented by $\bY\in \StieComp{p}{n}$. For any velocity vector $v\in \mathcal{T}_{\bom}$ with horizontal lift $\bZ\in \text{\emph{Hor}}_{\bY}$, take $\bZ=\bU\bSig\bV^{T}$ to be a thin SVD of $\bZ$. Then we define the exponential map
\begin{multline*}
\Exp_{\bom} \: : \: \mathcal{T}_{\bom}\longrightarrow \mathcal{G}(p,n),\\ 
v\mapsto \Exp_{\bom}(v):=\pi\left[\left(\bY\bV\cos(\bSig)+\bU\sin(\bSig)\right)\bV^{T}\right]=\alpha_{v}(1).
\end{multline*}
\end{definition}

Now, it is possible to define directly some inverse map of the exponential map, called the logarithm map~\cite{Absil2004}, but only \emph{locally}. For any $\bom$ and $\bY$ in its fiber, let us first define the open space
\begin{equation}\label{eq:Open_set_Um}
\mathrm{U}_{\bom}:=\{ \bom_{1}\in \mathcal{G}(p,n),\quad \bY^{T}\bY_{1} \text{ is invertible},\quad \bY_{1}\in \pi^{-1}(\bom_{1})\}.
\end{equation}
Then we have:

\begin{definition}[Logarithm map on Grassmannian manifold]\label{def:Log_map_Grass}
Let $\bom\in \mathcal{G}(p,n)$ be represented by a matrix $\bY\in  \StieComp{p}{n}$. For any point $\bom_1$ in the open space $\mathrm{U}_{\bom}$ represented by a matrix $\bY_1\in \StieComp{p}{n}$, define a thin SVD
\begin{linenomath} 
	\begin{equation*}
	\bY_{1}\left(\bY^{T}\bY_{1}\right)^{-1}-\bY=\bU\bSig \bV^{T}.
	\end{equation*} 
\end{linenomath}
Then the logarithm $\Log_{\bom}(\bom_{1})\in \mathcal{T}_{\bom}$ is the velocity vector in $\mathcal{T}_{\bom}$ with horizontal lift
\begin{linenomath} 
	\begin{equation*}
	\bZ=\bU \arctan(\bSig) \bV^{T}\in \text{\emph{Hor}}_{\bY}.
	\end{equation*}
\end{linenomath} 
\end{definition}

\begin{remark}\label{rem:Log_and_Exp_Inv}
The logarithm map is only defined on some \emph{open set} $\mathrm{U}_{\bom}$. This means that for any point $\bom_1\notin \mathrm{U}_{\bom}$, the associated matrix $\bY^{T}\bY_{1}$ is not invertible, so that the computation of
\begin{linenomath} 
	\begin{equation*}
	\bY_{1}\left(\bY^{T}\bY_{1}\right)^{-1}-\bY
	\end{equation*}
\end{linenomath} 
can not be done. Note finally that such an open set is strongly related to the \emph{cut-locus} of a Grassmann manifold~\cite{Berceanu1997}. 
\end{remark}

\subsection{Target Algorithm on compact Stiefel manifolds}%
\label{subsec:Target_Algo_Stiefel}
All the mathematical background  summarized in~\autoref{subsec:Rieman_Geo_Grass} can be used to obtain an interpolation curve between points $\bom_1,\dotsc,\bom_N$ on Grassmann manifold $\mathcal{G}(p,n)$~\cite{Amsallem2009,Mosquera2018}, where each point $\bom_i$ corresponds to a parameter value  $\lambda_i$. Indeed, once a \emph{reference point} $\bom_{i_0}\in \left\{\bom_1,\dotsc,\bom_N \right\}$ is chosen~(see Figure~\ref{fig:Log_Exp_map}): 
\begin{itemize}
	\item[$\bullet$] We use the logarithm map $\Log_{\bom_{i_0}}$ to \emph{linearize}, i.e., meaning we define velocity vectors $v_i:=\Log_{\bom_{i_0}}(\bom_i)$ on the \emph{vector space} $\mathcal{T}_{\bom_{i_0}}$.
	\item[$\bullet$] We obtain an interpolation curve $\lambda \mapsto v(\lambda)$ between vectors $v_i$, using for instance Lagrangian polynomial, and thus
	\begin{linenomath} \begin{equation*}
		v(\lambda_{i})=v_i,\quad \forall i=1,\dots,N.
	\end{equation*} \end{linenomath}
	\item[$\bullet$] Taking the exponential map $\Exp_{\bom_{i_0}}$, we obtain back an interpolation curve 
	\begin{linenomath} \begin{equation*}
		\lambda\mapsto \bom(\lambda):=\Exp_{\bom_{i_0}}(v(\lambda))	
	\end{equation*} \end{linenomath}
	between the points $\bom_1,\dotsc,\bom_N$ on $\mathcal{G}(p,n)$, so that
	\begin{linenomath} \begin{equation*}
		\bom(\lambda_{i})=\bom_i,\quad \forall i=1,\dots,N.
	\end{equation*} \end{linenomath}
\end{itemize}

We propose here to define curves on the compact Stiefel manifold $\mathcal{S}t(p,n)$ instead of the ones defined on the Grassmann manifold $\mathcal{G}(p,n)$. The starting point is a set of matrices $\bY_{1},\dotsc,\bY_{N}$ in the compact Stiefel manifold $\mathcal{S}t(p,n)$, corresponding to parameter values $\lambda_{1},\dots,\lambda_{N}$. Once a reference parameter value $\lambda_{\i_0}$ has been chosen, we obtain a curve
\begin{linenomath} \begin{equation*}
	\lambda \mapsto \bY(\lambda)
\end{equation*} \end{linenomath} 
where in general,
\begin{linenomath} \begin{equation*}
	\bY(\lambda_{i})\neq \bY_i. 
\end{equation*} \end{linenomath}
As a consequence, such a curve will not be an interpolation curve between the matrices $\bY_{1},\dotsc,\bY_{N}$ (see Remark~\ref{rem:Target_Not_Interpolated_Curve}).  Before doing so, and to obtain well-defined curves, we need to make a specific definition:

\begin{definition}[Genericity]\label{def:Genericity}
A matrix is said to be \emph{generic} if all its non--zero singular values are distinct. The set of generic matrices in $\VecMat{n}{p}$ is denoted $\VecMatG{n}{p}$.
\end{definition}

For any generic matrix $\bM\in \VecMatG{n}{p}$, we know that its thin SVD $\bM=\bU\bSig\bV^{T}$ is well defined. Indeed, taking $\sigma_1>\dotsc>\sigma_p$ to be its ordered singular values, we can write 
\begin{equation}\label{eq:SVD_Generic}
	\bM=\sum_{i=1}^p \sigma_{i} \uu^i \vv_i^{T}
\end{equation}
where $\uu_i$ (resp. $\vv_i$) is a left singular vector associated to $\sigma_{i}$ (resp. a right singular vector). All singular values being distinct, the only other possibility is to consider singular vectors $\epsilon_i\uu_i$ and $\epsilon_i\vv_i$, with $\epsilon_i=\pm 1$, so that the decomposition~\eqref{eq:SVD_Generic} remains the same. We thus deduce that the target Algorithm below is well defined:

\begin{algo}[Target algorithm]\label{alg:Target_Alg}

\begin{itemize}
	\item[$\bullet$] \textbf{Inputs}:
		\begin{itemize}
			\item Matrices $\bY_{1},\dotsc,\bY_{N}$ in $\StieComp{p}{n}$, corresponding to parameter values $\lambda_{1}<\dotsc <\lambda_{N}$.
			\item A reference parameter value $\lambda_{i_0}$ with $i_0\in \{ 1,\dotsc,N\}$. 
			\item A parameter value $\lambda$.
		\end{itemize}
	\item[$\bullet$] \textbf{Output}: A matrix $\bY(\lambda)\in \StieComp{p}{n}$. 
\end{itemize}

\begin{enumerate}
\item Define $\bZ_{i_0}:=\mathbf{0}$ and for each $k\in \{1,\dotsc,N\}$ with $k\neq i_0$ compute a thin SVD of the generic matrix
\begin{linenomath} \begin{equation*}
	\bY_{k}\left(\bY_{i_0}^{T}\bY_{k}\right)^{-1}-\bY_{i_0}=\bU_k \bSig_k  \bV_k^{T}
\end{equation*} \end{linenomath}
and define
\begin{linenomath} \begin{equation*}
\bZ_{k}:=\bU_{k} \arctan(\bSig_{k}) \bV_{k}^{T},\quad \text{ with assumption } \bZ_{k}\in \VecMatG{n}{p}.
\end{equation*} \end{linenomath}

\item Define an $n\times p$ matrix and compute a thin SVD
\begin{linenomath} \begin{equation*}
\bZ(\lambda):=\sum_{i=1}^{N} \prod_{i \neq j} \frac{\lambda-\lambda_j}{\lambda_i-\lambda_j}\mathbf{Z}_i=\bU(\lambda)\bSig(\lambda)\bV(\lambda)^T, 
\end{equation*} \end{linenomath}
with assumption $\bZ(\lambda)\in \VecMatG{n}{p}$.
\item Define the $n\times p$ matrix in $\StieComp{p}{n}$ (see Remark~\ref{rem:Exp_in_Stief}):
\begin{equation}\label{eq:Y_lambda}
\mathbf{Y}(\lambda) := 
[\mathbf{Y}_{i_0} \bV(\lambda)\text{cos} (\bSig(\lambda)) + 
\bU(\lambda) \text{sin} (\bSig(\lambda))]  \bV(\lambda)^T.
\end{equation}
Note: cos and sin act only on diagonal entries.
\end{enumerate}
\end{algo}

In this algorithm, as already noticed and following the assumptions of genericity, the matrices $\bZ_{k}$, $\bZ(\lambda)$ and $\bY(\lambda)$ do not depend on the choice of matrices in the associated thin SVD. 

\begin{remark}\label{rem:Target_Not_Interpolated_Curve}
	Using this target Algorithm to parameter value $\lambda:=\lambda_{k}$ leads to some matrix $\bY(\lambda_{k})$ generally different from $\bY_{k}$ (except for $k=i_0$). Thus, such an algorithm computed on compact Stiefel manifold do not produce an \emph{interpolation} on the points $\bY_{1},\dotsc,\bY_{N}$ (see Figure~\ref{fig:Grasmann_Quotient}). Indeed, to represent an interpolation curve between these points means that if we consider the parameter value $\lambda=\lambda_k$ (with $k\in \{1,\dots,N\}$) as input in the algorithm, one should expect to return as output  $\bY(\lambda_{k})$ (given by~\eqref{eq:Y_lambda}) the initial matrix $\bY_k$, which is not the case in general.  
	
	Nevertheless, matrices $\bY(\lambda_{k})$ and $\bY_k$ define the same point on the Grassmann manifold $\mathcal{G}(p,n)$, meaning that they both define an \emph{orthonormal basis of the same subspace} $\bom_k$ (see Remark~\ref{rem:Log_and_Exp_Inv}). As a consequence, a projection matrix onto the subspace $\bom_k$ is given by $\bY(\lambda_k)^{T}\bY(\lambda_k)$ or equivalently by $\bY_k^{T}\bY_k$. 
\end{remark}

\begin{example}
	Take for instance the compact Stiefel manifold $\StieComp{2}{5}$, and the three matrices 
	\begin{linenomath} \begin{equation*}
		\bY_1:=\begin{bmatrix}
		1 & 0  \\
		0 & 1  \\
		0 & 0  \\
		0 & 0  \\
		0 & 0 
		\end{bmatrix},\quad 
		\bY_2:=\begin{bmatrix}
		\frac{\sqrt{3}}{3} &  \frac{\sqrt{3}}{3} \\
		0 & \frac{\sqrt{3}}{3}  \\
		\frac{\sqrt{3}}{3} & 0 \\
		-\frac{\sqrt{3}}{3} & \frac{\sqrt{3}}{3} \\
		0 & 0 
		\end{bmatrix},\quad
		\bY_3:=\begin{bmatrix}
		\frac{\sqrt{3}}{3} &  -\frac{\sqrt{6}}{6} \\
		0 & \frac{\sqrt{6}}{4}  \\
		\frac{\sqrt{3}}{3} & \frac{\sqrt{6}}{12} \\
		0 & \frac{\sqrt{6}}{4} \\
		\frac{\sqrt{3}}{3} & \frac{\sqrt{6}}{12} 
		\end{bmatrix}
	\end{equation*} \end{linenomath}
	which correspond respectively to $\lambda_{1}=15$, $\lambda_{2}=22$ and $\lambda_{3}=27$. Choosing the reference parameter value to be $\lambda_1$ and following the target Algorithm~\ref{alg:Target_Alg} we obtain
	\begin{linenomath} \begin{equation*}
		\bY_2(\bY_1^T\bY_2)^{-1}-\bY_1=\begin{bmatrix}
		0 & 0  \\
		0 & 0  \\
		1 & -1  \\
		-1 & 2  \\
		0 & 0
		\end{bmatrix},\quad 
		\bY_3(\bY_1^T\bY_3)^{-1}-\bY_1=\begin{bmatrix}
		0 & 0  \\
		0 & 0  \\
		1 & 1  \\
		0 & 1  \\
		1 & 1
		\end{bmatrix}
	\end{equation*} \end{linenomath}
	Taking $\lambda=\lambda_2$ and $\lambda=\lambda_3$ as inputs in the algorithm, we finally obtain the matrices (with computation done using 5 digits):
	\begin{linenomath} \begin{equation*}
	\bY(\lambda_{2})=
	\begin{bmatrix}
	0.77460 & 0.25820 \\
	0.25820 & 0.51640 \\ 
	0.51640 & -0.25820 \\ 
	- 0.25820 & 0.77460 \\ 
	0 & 0 
	\end{bmatrix}\neq \bY_2,\quad
	\bY(\lambda_{3})=
	\begin{bmatrix}
	0.67860 & -0.19876 \\ 
	-0.19876 & 0.57922 \\ 
	0.47984 & 0.38046 \\ 
	-0.19876 & 0.57922 \\ 
	0.47984 & 0.38046
	\end{bmatrix}\neq \bY_3.
	\end{equation*} \end{linenomath}
\end{example}

\section{Space-Time Interpolation on compact Stiefel manifolds}
\label{sec:ST_Interpolation}
As already noticed, POD is extracting the optimal space structures and the associated temporal modes. An important property is that the spatial and temporal orthogonal modes are \emph{coupled}: each space component is associated with a temporal component partner and there is a one-to-one correspondence between both spaces.
Taking advance of this decomposition into orthogonal modes, it is natural to try a \emph{Space-Time} interpolation on compact Stiefel manifolds based on the target Algorithm~\ref{alg:Target_Alg}, instead of an interpolation of the space part alone, followed by a Galerkin approach as is classically done~\cite{Amsallem2009,Mosquera2018}. 

As a starting point, take a set of \emph{snapshot matrices} $\bS^{(1)},\dotsc,\bS^{(N)}$, where each matrix $\bS^{(k)}\in \VecMat{n}{m}$ corresponds to a given parameter value $\lambda_{k}\in \RR$, with $\lambda_1<\dotsc<\lambda_{N}$ and $n=3N_s$ corresponding to the spatial part, while $m=N_t$ corresponds to the temporal part. For a given mode $p\leq N_t$, our goal is to
\begin{enumerate}
	\item Extract in a unique way a POD of mode $p$ of each matrix $\bS^{(k)}$, so that we have a well defined map
	\begin{linenomath} \begin{equation*}
		\bS^{(k)}\in \VecMat{n}{m} \mapsto \bS^{(k)}_p \in \VecMat{n}{m}.
	\end{equation*} \end{linenomath}
	\item Obtain for each $\bS^{(k)}_p \in \VecMat{n}{m}$ a unique matrix $\bPhi_{p}^{(k)}\in \StieComp{p}{n}$ for the spatial part and another unique matrix $\bPsi_{p}^{(k)}\in \StieComp{p}{m}$ for the temporal part. 
	\item Use the target Algorithm~\ref{alg:Target_Alg} on matrices $\bPhi_{p}^{(k)}$ first, and then on matrices $\bPsi_{p}^{(k)}$, in order to obtain two curves 
	\begin{equation}\label{eq:Spatial_Time_Curve}
	 \lambda\mapsto \bPhi(\lambda),\quad \lambda\mapsto\bPsi(\lambda)
	\end{equation}
	\emph{which are not interpolated curves}, as in general $\bPhi(\lambda_k)\neq \bPhi_{p}^{(k)}$ and $\bPsi(\lambda_k)\neq \bPsi_{p}^{(k)}$ (see Remark~\ref{rem:Target_Not_Interpolated_Curve}).
	\item Define an interpolation curve $\lambda\mapsto \bS(\lambda)$ between matrices $\bS_p^{(1)},\dots,\bS_p^{(N)}$, using curves obtained by~\eqref{eq:Spatial_Time_Curve}.
\end{enumerate}
We now detail two key points: the first concerns a new type of SVD, called oriented SVD, which allows defining the matrices $\bPhi_{p}^{(k)}$ and $\bPsi_{p}^{(k)}$ in a unique way. Finally, we will explain in \autoref{subsec:Space_Time_Algo} how to construct the curve $\lambda\mapsto \bS(\lambda)$, which requires the introduction of a mixed part.

\subsection{Oriented SVD on generic matrices}

As already noticed in~\autoref{sec:POD}, any computation of a POD of mode $p$ of a matrix $\bS\in \VecMat{n}{m}$ can be obtained from a SVD. Suppose now that $\bS$ is of rank $r\geq p$. Any SVD of $\bS$ with singular values $\sigma_{1}>\dotsc>\sigma_{r}$ leads to \emph{spatial orthonormal vectors} $\phi_{1},\dotsc,\phi_{r}$ in $\RR^{n}$ (the left singular vectors) and \emph{temporal orthonormal vectors} $\psi_{1},\dotsc,\psi_{r}$ in $\RR^{m}$ (the right singular vectors). A POD of mode $p$ then writes
\begin{equation}\label{eq:POD_mode_p}
	\bS_p=\bPhi_p \bSig_p \bPsi_p^T,\quad \bPhi_p:=[\phi_1,\dots,\phi_p],\quad \bSig_p:=\text{diag}(\sigma_{1},\dotsc,\sigma_{p}),\quad \bPsi_p:=[\psi_1,\dots,\psi_p].
\end{equation}
Now, because of sign indeterminacy of the spatial vectors $\phi_i$ and temporal vectors $\psi_i$, the matrices $\bPhi_p,\bPsi_p$ are not uniquely defined.

To overcome this difficulty, we need to introduce a \emph{new SVD} so that, under the assumption of genericity (see Definition~\ref{def:Genericity}), the matrices $\bPhi_p$ and $\bPsi_p$ given by~\eqref{eq:POD_mode_p} can be well-defined.   

The main idea of the new SVD introduced here is to make an intrinsic choice on the orientation for each space and temporal vector. Indeed, for each spatial vector $\phi$, only two choices can occur: $\phi$ or $-\phi$ (thus inducing a choice on the associated temporal vector). A choice of orientation is then made as follows. Taking the column vectors $\bS=[\bs_1,\dotsc,\bs_m]$ and $\bs$ to be the first column vector such that the scalar product $\langle \bs,\phi\rangle$ is non zero, we impose the sign taking $\langle \bs,\phi\rangle>0$.  

Let us now give all details to compute the oriented SVD before obtaining algorithm~\ref{alg:compact_oriented_svd}. A first Lemma, obtained by direct computation, allows us to use a column vector of the initial snapshot matrix $\bS$ to choose orientation: 
\begin{lemma}
	Let us consider $\bs_{1},\dotsc,\bs_{m}\in \RR^{n}$ to be the column vectors of $\bS\in \VecMat{n}{m}$ and take $\phi\in \RR^{n}$ to be a unit spatial vector of $\bS$, associated with a non--zero singular value $\sigma$. Then, there exists $i\in \{ 1,\dotsc,m\}$ such that $\langle \bs_{i},\phi\rangle=\bs_{i}^{T} \phi\neq 0$. 
\end{lemma}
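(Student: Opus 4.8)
The plan is to argue by contradiction using the defining relations of the SVD. Suppose, for contradiction, that $\bs_i^T\phi = 0$ for every $i\in\{1,\dotsc,m\}$. Stacking these scalar equations, this is exactly the statement that $\bS^T\phi = \mathbf{0}\in\RR^m$, i.e. $\phi$ is orthogonal to every column of $\bS$, hence orthogonal to the column space of $\bS$.

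Next I would bring in the hypothesis that $\phi$ is a unit left singular vector of $\bS$ attached to a non-zero singular value $\sigma$. By definition of the SVD, there is a corresponding unit right singular vector $\psi\in\RR^m$ with
\begin{linenomath}
\begin{equation*}
\bS\psi = \sigma\phi,\qquad \bS^T\phi = \sigma\psi .
\end{equation*}
\end{linenomath}
(The second identity follows from the first together with orthonormality of the singular vectors, or directly from writing $\bS=\bPhi\bSig\bPsi^T$.) Since $\sigma\neq 0$ and $\psi$ is a unit vector, $\sigma\psi\neq\mathbf{0}$, so $\bS^T\phi\neq\mathbf{0}$. This contradicts $\bS^T\phi=\mathbf{0}$ obtained above, and therefore at least one of the numbers $\bs_i^T\phi$ must be non-zero.

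There is essentially no obstacle here; the only point requiring a little care is making explicit the relation $\bS^T\phi=\sigma\psi$, i.e. that a left singular vector with non-zero singular value cannot be annihilated by $\bS^T$. This is where the assumption $\sigma\neq 0$ is used — a left singular vector associated with a zero singular value could well be orthogonal to the whole column space, so the statement would fail without that hypothesis. One could alternatively phrase the whole argument without picking out $\psi$: the orthogonal projection of $\phi$ onto the column space of $\bS$ equals $\phi$ itself because $\phi\in\mathrm{Im}(\bS)$ (as $\phi = \sigma^{-1}\bS\psi$), so $\phi$ cannot simultaneously be orthogonal to $\mathrm{Im}(\bS)$ unless $\phi=\mathbf{0}$, which is impossible since $\|\phi\|=1$.
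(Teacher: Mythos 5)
Your proof is correct: the paper itself offers no written argument (it states the lemma is ``obtained by direct computation''), and your observation that $\bS^{T}\phi=\sigma\psi\neq\mathbf{0}$ whenever $\sigma\neq0$, contradicting $\bs_{i}^{T}\phi=0$ for all $i$, is precisely that direct computation. Nothing is missing, and your remark that the hypothesis $\sigma\neq0$ is essential is a useful clarification.
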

From this, for any unit spatial vector $\phi\in \RR^{n}$ of $\bS$, let us define $\bs(\phi)$ to be the first column vector $\bs_i$ in $\bS=[\bs_1,\dotsc,\bs_n]$ such that $\langle\phi,\bs_i\rangle\neq 0$:
\begin{equation}\label{eq:Def_Associated_SVector}
	\bs(\phi):=\bs_i,\quad i:=\min\left\{j,\quad \langle \bs_{j},\phi\rangle \neq 0 \right\}.
\end{equation}

Any spatial eigenvector can therefore have a specific orientation:
\begin{definition}[Oriented eigenvectors]
	Let $\bS\in \mathrm{Mat}_{n,m}(\RR)$ and $\phi\in \RR^{n}$ a unit spatial vector associated to a non--zero singular value $\sigma$. Then $\phi$ is said to be \emph{oriented} if $\langle \bs(\phi),\phi\rangle>0$.
\end{definition}
From all this, let us now deduce the new SVD:
\begin{lemma}[Oriented SVD]\label{lem:SVD_sv_Generic}
	Let $\bS\in \mathrm{Mat}^{0}_{n,m}(\RR)$ ($m\leq n$) of rank $r$ such that all its non-zero singular values are distinct. Then, there exists one and only one couple of matrices 
	\begin{equation}
	\bPhi=[\phi_{1},\dotsc,\phi_{r}]\in \mathrm{Mat}_{n,r}(\RR),\quad \bPsi=[\psi_{1},\dotsc,\psi_{r}]\in \mathrm{Mat}_{m,r}(\RR)
	\end{equation}
	such that  
	\begin{equation}
	\langle \phi_{i},\phi_{j}\rangle=\langle \psi_{i},\psi_{j}\rangle=\delta_{ij},\quad \bS=\bPhi\bSig\bPsi^{T},\quad \bSig:=\text{Diag}(\sigma_1,\dotsc,\sigma_{r})\in \mathrm{Mat}_{r,r}(\RR)
	\end{equation}
	and $\phi_{i}$ are oriented spatial unit eigenvectors:
	\begin{equation}
	\langle \bs(\phi_{i}),\phi_{i}\rangle>0
	\end{equation}
	with $\bs(\phi_{i})$ defined by~\eqref{eq:Def_Associated_SVector}. Such a decomposition is called an \emph{oriented SVD}.
\end{lemma}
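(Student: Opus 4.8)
The plan is to reduce everything to the spectral theorem applied to the symmetric positive semidefinite matrix $\bS\bS^{T}$, and then to check that the orientation rule removes exactly the residual sign ambiguity. First I would record the following normalization fact. For \emph{any} SVD $\bS=\bPhi\bSig\bPsi^{T}$ of the stated shape, expanding $\bS\bS^{T}=\sum_{i=1}^{r}\sigma_{i}^{2}\phi_{i}\phi_{i}^{T}$ shows that each column $\phi_{i}$ is a unit eigenvector of $\bS\bS^{T}$ for the eigenvalue $\sigma_{i}^{2}$. By genericity the nonzero eigenvalues $\sigma_{1}^{2}>\dotsc>\sigma_{r}^{2}$ of $\bS\bS^{T}$ are pairwise distinct, hence simple, so each eigenspace is a line; therefore $\phi_{i}$ is determined up to a sign $\epsilon_{i}=\pm 1$, and the ordering $\phi_{1},\dotsc,\phi_{r}$ is itself forced by the decreasing order of the $\sigma_{i}$. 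Moreover $\bS^{T}\bPhi=\bPsi\bSig$ (a consequence of $\bPhi^{T}\bPhi=\mathbf{I}_{r}$) gives $\psi_{i}=\sigma_{i}^{-1}\bS^{T}\phi_{i}$, so $\bPsi$ is completely determined by $\bPhi$; in particular a sign flip $\phi_{i}\mapsto\epsilon_{i}\phi_{i}$ forces $\psi_{i}\mapsto\epsilon_{i}\psi_{i}$. Conversely, starting from unit eigenvectors $\phi_{i}$ of $\bS\bS^{T}$ and setting $\psi_{i}:=\sigma_{i}^{-1}\bS^{T}\phi_{i}$, both the orthonormality of the $\psi_{i}$ and the identity $\bS=\sum_{i}\sigma_{i}\phi_{i}\psi_{i}^{T}$ follow by direct computation (for the latter, using $\mathrm{Im}(\bS\bS^{T})=\mathrm{Im}(\bS)$). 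Thus the whole question is: among the $2^{r}$ sign choices, is there exactly one making every $\phi_{i}$ oriented?

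Next I would establish existence. Fix one such family $\phi_{1},\dotsc,\phi_{r}$. By the preceding Lemma, for each $i$ there is a column $\bs_{j}$ of $\bS$ with $\langle\bs_{j},\phi_{i}\rangle\neq 0$, so the index defining $\bs(\phi_{i})$ in~\eqref{eq:Def_Associated_SVector} exists and $\bs(\phi_{i})$ is well defined. The key elementary observation is that the index $\min\{j:\langle\bs_{j},\phi_{i}\rangle\neq 0\}$ is unchanged when $\phi_{i}$ is replaced by $-\phi_{i}$, because $\langle\bs_{j},-\phi_{i}\rangle$ vanishes exactly when $\langle\bs_{j},\phi_{i}\rangle$ does; hence $\bs(\epsilon\phi_{i})=\bs(\phi_{i})$ for $\epsilon=\pm 1$. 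Now set $\epsilon_{i}:=\mathrm{sign}\langle\bs(\phi_{i}),\phi_{i}\rangle$ and replace $(\phi_{i},\psi_{i})$ by $(\epsilon_{i}\phi_{i},\epsilon_{i}\psi_{i})$: this is still an SVD of $\bS$ with orthonormal columns, and $\langle\bs(\epsilon_{i}\phi_{i}),\epsilon_{i}\phi_{i}\rangle=\epsilon_{i}\langle\bs(\phi_{i}),\phi_{i}\rangle=|\langle\bs(\phi_{i}),\phi_{i}\rangle|>0$, so the new couple $(\bPhi,\bPsi)$ is oriented.

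Finally I would prove uniqueness. Let $(\bPhi,\bPsi)$ and $(\widehat{\bPhi},\widehat{\bPsi})$ both satisfy the conclusion. By the first paragraph $\widehat{\phi}_{i}=\epsilon_{i}\phi_{i}$ and $\widehat{\psi}_{i}=\epsilon_{i}\psi_{i}$ for some $\epsilon_{i}\in\{\pm 1\}$, and $\bs(\widehat{\phi}_{i})=\bs(\phi_{i})$ by the sign-invariance just noted. The orientation condition on both families gives $0<\langle\bs(\widehat{\phi}_{i}),\widehat{\phi}_{i}\rangle=\epsilon_{i}\langle\bs(\phi_{i}),\phi_{i}\rangle$ while also $\langle\bs(\phi_{i}),\phi_{i}\rangle>0$, forcing $\epsilon_{i}=1$. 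Hence $\widehat{\bPhi}=\bPhi$ and $\widehat{\bPsi}=\bPsi$.

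I expect the only genuine subtlety — the step most worth spelling out — to be the invariance of the index $\min\{j:\langle\bs_{j},\phi\rangle\neq 0\}$ under $\phi\mapsto-\phi$, together with the appeal to the preceding Lemma guaranteeing that this index exists; once these are in hand, the remainder is the spectral theorem plus bookkeeping of signs. It is also worth pausing to justify $\mathrm{Im}(\bS\bS^{T})=\mathrm{Im}(\bS)$ and the orthonormality of the $\psi_{i}$ if one prefers to build existence directly from $\bS\bS^{T}$ rather than from an abstract SVD.
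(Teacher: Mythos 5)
Your proof is correct and follows essentially the same route as the paper's: by genericity each spatial singular vector is determined up to a sign with its temporal partner $\psi_i=\sigma_i^{-1}\bS^{T}\phi_i$ uniquely attached, and the orientation rule $\langle \bs(\phi_i),\phi_i\rangle>0$ then fixes the remaining sign ambiguity. You simply spell out what the paper leaves implicit — the spectral-theorem justification of the modulo-$\pm 1$ claim, the explicit existence construction by sign flips, and the key invariance $\bs(-\phi)=\bs(\phi)$ — so no gap remains.
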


\begin{proof}
	First, any couple $(\phi,\psi)$ of spatial--temporal unit eigenvector for $\bS$ is defined modulo $\pm 1$, and $\psi$ is obtained in a unique way from $\phi$. 
	
	Let us suppose now we do not have uniqueness, so that there exist two unit spatial vectors $\phi$ and $\phi'$ associated to $\sigma$ such that
	\begin{linenomath} \begin{equation*}
	\langle \bs(\phi),\phi\rangle>0 \text{ and } \langle \bs(\phi'),\phi'\rangle>0.
	\end{equation*} \end{linenomath}
    We necessary have $\phi'=-\phi$ and $\bs(\phi)=\bs(\phi')$ so we deduce that
	\begin{linenomath} \begin{equation*}
	\langle \bs(\phi'),\phi'\rangle>=-\langle \bs(\phi),\phi\rangle>0
	\end{equation*} \end{linenomath}
	which is a contradiction, and we can conclude our proof.
\end{proof}

We give now an algorithm to obtain such an oriented SVD:

\begin{algo}[Oriented SVD]\label{alg:compact_oriented_svd}
	
		\begin{itemize}
			\item[$\bullet$] \textbf{Inputs}: $m\leq n$ and $\bS\in \mathrm{Mat}^{0}_{n,m}(\RR)$ of rank $r$.
			\item[$\bullet$] \textbf{Output}: Unique matrices $\bPhi$ and $\bPsi$ for an oriented SVD of $\bS$. 
		\end{itemize}
	
	\begin{enumerate}
		\item Compute a SVD of $\bS$ so that to obtain spatial unit vectors $\phi_{1},\dotsc,\phi_{r}$ and temporal unit vectors $\psi_{1},\dotsc,\psi_{r}$. 
		\item Consider the column vectors $\bs_{1},\dotsc,\bs_{m}$ of $\bS$.
		\item For $i=1,\dots,r$ define
		\begin{linenomath} \begin{equation*}
		\varepsilon_{i}:=\frac{\langle \bs(\phi_i),\phi_i\rangle}{\vert \langle \bs(\phi_i),\phi_i\rangle\vert}
		\end{equation*} \end{linenomath}
		where $\bs(\phi_i)$ is the first column vector $\bs$ of $\bS$ such that $\langle \phi_i,\bs\rangle\neq 0$, see~\eqref{eq:Def_Associated_SVector}.
		\item For $i=1,\dots,r$, make sign replacement
		\begin{linenomath} \begin{equation*}
		\phi_{i}\leftarrow \varepsilon_{i}\phi_{i},\quad \psi_{i}\leftarrow \varepsilon_{i}\psi_{i}.
		\end{equation*} \end{linenomath}
	\end{enumerate}
\end{algo}

\begin{example}
	Assume the rank 3 matrix
	\begin{linenomath} \begin{equation*}
		\bS=\begin{bmatrix}
		1 & 0 & 1 \\ 
		-1 & 1 & 0 \\
		0 & 2 & -1 \\
		0 & -1 & 0 \\
		1 & 0 & 1 \\
		0 & 0 & 0
		\end{bmatrix}=[\bs_1,\bs_2,\bs_3]
	\end{equation*} \end{linenomath}
	where a unit spatial vector corresponding to the largest singular value is given by (with 5 digits)
	\begin{linenomath} \begin{equation*}
		\phi_1=\begin{bmatrix}
			-0.31145 \\ 0.41763 \\ 0.74265 \\ -0.28294 \\ -0.31145
		\end{bmatrix}
	\end{equation*} \end{linenomath}
	and we can check that $\bs(\phi_1)=\bs_1$ with $\langle \phi_1,\bs_1\rangle<0$ so that we consider $-\phi_1$ instead of $\phi_1$, and so on.
\end{example}

\subsection{Space--Time interpolation algorithm}\label{subsec:Space_Time_Algo}

In this subsection, we define a Space--Time interpolation on any family of POD of mode $p$ taken from \emph{generic} snapshot matrices (see an overview in Figure~\ref{fig:Space_Time_Algo}). Such interpolation captures both the spatial and temporal part of such matrices, which is necessary from the point of view of mechanical equations, but we will also need to define a specific \emph{mixed part} of each POD (see lemma~\ref{lem:Mixed_Part_Interpol}).

Take back parameter values $\lambda_{1}<\dotsc <\lambda_{N}$, corresponding to snapshot matrices $\bS^{(1)},\dotsc,\bS^{(N)}$ in $\VecMat{n}{m}$, with $n=3N_s$ and $m=N_t$. To make use of the oriented SVD, let us suppose:

\textbf{Genericity assumption:} All snapshot matrices $\bS^{(1)},\dotsc,\bS^{(N)}$ have distinct non--zero singular values.

Take now $p$ to be some integer (less or equal than the minimum rank of all matrices $\bS^{(k)}$). Using the oriented SVD given by Algorithm~\ref{alg:compact_oriented_svd}, we can consider a POD of mode $p$ on each matrix $\bS^{(k)}$:
\begin{equation}
\bS_p^{(k)}:=\bPhi_{p}^{(k)} \bSig_{p}^{(k)}{\bPsi_{p}^{(k)}}^{T}\in \VecMat{n}{m}
\end{equation}
where $\bSig_{k}$ corresponds to singular values, and $\bPhi_{p}^{(k)}$ as well as $\bPsi_{p}^{(k)}$ uniquely define points in a compact Stiefel manifold:
\begin{equation}
	\bPhi_{p}^{(k)}:=[\phi_{1}^{(k)},\dotsc,\phi_{p}^{(k)}]\in \StieComp{p}{n},\quad \bPsi_{p}^{(k)}:=[\psi_{1}^{(k)},\dotsc,\psi_{p}^{(k)}]\in \StieComp{p}{m},
\end{equation}
Recall that in previous equation, $\phi_{1}^{(k)},\dotsc,\phi_{p}^{(k)}$ (resp. $\psi_{1}^{(k)},\dotsc,\psi_{p}^{(k)}$) correspond to spatial oriented eigenvectors (resp. temporal ones) of $\bS^{(k)}$.

Using the target Algorithm~\ref{alg:Target_Alg} first for the spatial matrices $\bPhi_{p}^{(k)}$ and then for the temporal matrices $\bPsi_{p}^{(k)}$, we obtain two curves
\begin{linenomath} \begin{equation*}
	\lambda \mapsto \bPhi(\lambda),\quad \lambda \mapsto \bPsi(\lambda).
\end{equation*} \end{linenomath}
Now, our goal is to produce an interpolation curve between the matrices $\bS_p^{(k)}$, taking into account both spatial and temporal curves defined above. Such a curve is given by
\begin{equation}\label{eq:Interpol_Curve_ST}
	\lambda \mapsto \bS(\lambda):=\bPhi(\lambda)\bM(\lambda)\bPsi(\lambda)^T \text{ with } \bS(\lambda_{k})=\bS_p^{(k)}.
\end{equation}
\begin{lemma}\label{lem:Mixed_Part_Interpol}
	For a curve defined~\eqref{eq:Interpol_Curve_ST} to be an interpolation curve between the matrices $\bS_p^{(k)}$, we necessary have
	\begin{equation}\label{eq:Formula_Mixed_Part}
		\bM(\lambda_{k})=\bPhi(\lambda_{k})^T\bS_{p}^{(k)}\bPsi(\lambda_{k}).
	\end{equation} 
\end{lemma}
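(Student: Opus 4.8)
The plan is to simply evaluate the purported interpolation identity at the training nodes and then peel off the outer factors using the orthonormality that is built into the target algorithm. First I would fix $k\in\{1,\dots,N\}$ and start from the defining requirement of an interpolation curve, namely $\bS(\lambda_{k})=\bS_p^{(k)}$, which by the ansatz~\eqref{eq:Interpol_Curve_ST} reads
\begin{linenomath}
\begin{equation*}
\bPhi(\lambda_{k})\,\bM(\lambda_{k})\,\bPsi(\lambda_{k})^{T}=\bS_p^{(k)}.
\end{equation*}
\end{linenomath}

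Next I would recall that, by construction, the curves produced by the target Algorithm~\ref{alg:Target_Alg} take values in the compact Stiefel manifolds: $\bPhi(\lambda_{k})\in\StieComp{p}{n}$ and $\bPsi(\lambda_{k})\in\StieComp{p}{m}$ (this is precisely the content of step~3 of Algorithm~\ref{alg:Target_Alg} together with the remark following Theorem~\ref{thm:Eq_Geo_Grass}, where it is verified by direct computation that $\bY(\lambda)^{T}\bY(\lambda)=\mathbf{I}_p$). Hence $\bPhi(\lambda_{k})^{T}\bPhi(\lambda_{k})=\mathbf{I}_p$ and $\bPsi(\lambda_{k})^{T}\bPsi(\lambda_{k})=\mathbf{I}_p$. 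Left-multiplying the displayed identity by $\bPhi(\lambda_{k})^{T}$ and right-multiplying by $\bPsi(\lambda_{k})$ then collapses the two Gram matrices to identities and yields
\begin{linenomath}
\begin{equation*}
\bM(\lambda_{k})=\bPhi(\lambda_{k})^{T}\bS_p^{(k)}\bPsi(\lambda_{k}),
\end{equation*}
\end{linenomath}
which is~\eqref{eq:Formula_Mixed_Part}. I would also note for completeness that this $\bM(\lambda_k)$ is the \emph{unique} $p\times p$ matrix with the required property, since $\bPhi(\lambda_k)$ and $\bPsi(\lambda_k)$ have full column rank $p$, so the linear map $\bM\mapsto\bPhi(\lambda_k)\bM\bPsi(\lambda_k)^{T}$ on $\VecMat{p}{p}$ is injective.

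There is essentially no obstacle here: the argument is a one-line manipulation once one has the Stiefel-manifold membership of the algorithm's output. The only point that requires care — and the only thing worth stating explicitly — is the justification that $\bPhi(\lambda_k)$ and $\bPsi(\lambda_k)$ really do satisfy the orthonormality relations, i.e. that the target algorithm outputs points of $\StieComp{p}{n}$ and $\StieComp{p}{m}$ and not merely representatives of Grassmann points up to a $\mathrm{GL}(p)$ factor; this is exactly why the normalization by $\bV(\lambda)^{T}$ in~\eqref{eq:Y_lambda} matters, and it is what makes the formula for the mixed part well posed.
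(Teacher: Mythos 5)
Your argument is correct and is essentially identical to the paper's own proof: evaluate the ansatz~\eqref{eq:Interpol_Curve_ST} at $\lambda_k$, invoke that $\bPhi(\lambda_k)\in\StieComp{p}{n}$ and $\bPsi(\lambda_k)\in\StieComp{p}{m}$ by construction of the target Algorithm~\ref{alg:Target_Alg}, and cancel the Gram matrices by left- and right-multiplication. Your added remark on uniqueness of $\bM(\lambda_k)$ via full column rank is a harmless (and correct) extra not present in the paper.
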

\begin{proof}
	To satisfy~\eqref{eq:Interpol_Curve_ST}, we necessary have
	\begin{equation}\label{eq:Rel_for_lemma_Mixed_part}
		\bS(\lambda_{k})=\bPhi(\lambda_k)\bM(\lambda_k)\bPsi(\lambda_k)^T=\bS_p^{(k)},
	\end{equation}
	where, by construction, we have $\bPhi(\lambda_k)\in \StieComp{p}{n}$ and $\bPsi(\lambda_k)\in \StieComp{p}{m}$ (see target Algorithm~\ref{alg:Target_Alg}). We thus have
	\begin{linenomath} \begin{equation*}
		\bPhi(\lambda_{k})^T\bPhi(\lambda_k)=\bPsi(\lambda_{k})^T\bPsi(\lambda_k)=\mathbf{I}_p,
	\end{equation*} \end{linenomath}
	so that by the left and right multiplication of~\eqref{eq:Rel_for_lemma_Mixed_part} we obtain formula~\eqref{eq:Formula_Mixed_Part} for the mixed part. 
\end{proof}

Now we have:
\begin{lemma}
Let $\lambda\mapsto \bM(\lambda)\in \VecMat{p}{p}$ be any interpolated curve between the mixed part matrices 
\begin{linenomath} \begin{equation*}
	\bM_k:=\bPhi(\lambda_{k})^T\bS_{p}^{(k)}\bPsi(\lambda_{k})\in \VecMat{p}{p}
\end{equation*} \end{linenomath}
so that $\bM(\lambda_k)=\bM_k$ for $k=1,\dotsc,N$. Then, using the curve $\lambda\mapsto \bPhi(\lambda)$ (resp. $\lambda\mapsto \bPsi(\lambda)$) defined by the target Algorithm~\ref{alg:Target_Alg} applied on the matrices $\bPhi_{p}^{(k)}$ (resp. $\bPsi_{p}^{(k)}$), the curve
\begin{equation}
\kappa\: : \: \lambda\mapsto \bPhi(\lambda)\bM(\lambda)\bPsi(\lambda)^{T}
\end{equation}
is an interpolated curve between the matrices $\bS_p^{(1)},\dotsc,\bS_p^{(N)}$, so that $\kappa(\lambda_k)=\bS_p^{(k)}$ for each $k=1,\dotsc,N$.
\end{lemma}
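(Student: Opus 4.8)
The plan is to reduce the statement to an elementary projection argument, once we know that evaluating the target Algorithm~\ref{alg:Target_Alg} at one of its own nodes recovers the correct subspace. First I would use the hypothesis that $\lambda\mapsto\bM(\lambda)$ interpolates the mixed parts: at a node $\lambda_k$ this gives $\bM(\lambda_k)=\bM_k=\bPhi(\lambda_k)^T\bS_p^{(k)}\bPsi(\lambda_k)$, so that
\begin{equation*}
\kappa(\lambda_k)=\bPhi(\lambda_k)\,\bM(\lambda_k)\,\bPsi(\lambda_k)^T=\bPhi(\lambda_k)\bPhi(\lambda_k)^T\,\bS_p^{(k)}\,\bPsi(\lambda_k)\bPsi(\lambda_k)^T .
\end{equation*}
Hence it suffices to prove the two identities $\bPhi(\lambda_k)\bPhi(\lambda_k)^T\bS_p^{(k)}=\bS_p^{(k)}$ and $\bS_p^{(k)}\bPsi(\lambda_k)\bPsi(\lambda_k)^T=\bS_p^{(k)}$.

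The second step is to establish that $\mathrm{col}\,\bPhi(\lambda_k)=\mathrm{col}\,\bPhi_p^{(k)}$ and $\mathrm{col}\,\bPsi(\lambda_k)=\mathrm{col}\,\bPsi_p^{(k)}$. This is exactly where the structure of Algorithm~\ref{alg:Target_Alg} is used. The Lagrange interpolation in step~2 is exact at the nodes, so $\bZ(\lambda_k)=\bZ_k$; by construction $\bZ_k$ is the horizontal lift of $v_k:=\Log_{\bom_{i_0}}(\bom_k)$ (with $\bZ_{i_0}=\mathbf{0}$), so step~3 returns precisely $\Exp_{\bom_{i_0}}(v_k)=\Exp_{\bom_{i_0}}\bigl(\Log_{\bom_{i_0}}(\bom_k)\bigr)$. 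Since on the open set $\mathrm{U}_{\bom_{i_0}}$ the logarithm map is a genuine inverse of the exponential map (Definitions~\ref{def:Exp_Map} and~\ref{def:Log_map_Grass}), this equals $\bom_k$ as a point of $\mathcal{G}(p,n)$; equivalently, $\bPhi(\lambda_k)$ and $\bPhi_p^{(k)}$ are two orthonormal bases of the same $p$-dimensional subspace — this is precisely the content of Remark~\ref{rem:Target_Not_Interpolated_Curve}. The same argument, applied to the temporal matrices $\bPsi_p^{(k)}\in\StieComp{p}{m}$, gives $\mathrm{col}\,\bPsi(\lambda_k)=\mathrm{col}\,\bPsi_p^{(k)}$.

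The third step is the conclusion. From the oriented SVD $\bS_p^{(k)}=\bPhi_p^{(k)}\bSig_p^{(k)}\bigl(\bPsi_p^{(k)}\bigr)^T$ we get $\mathrm{col}\,\bS_p^{(k)}\subseteq\mathrm{col}\,\bPhi_p^{(k)}=\mathrm{col}\,\bPhi(\lambda_k)$ and $\mathrm{col}\,\bigl(\bS_p^{(k)}\bigr)^T\subseteq\mathrm{col}\,\bPsi_p^{(k)}=\mathrm{col}\,\bPsi(\lambda_k)$. Because $\bPhi(\lambda_k)\in\StieComp{p}{n}$ has orthonormal columns, $\bPhi(\lambda_k)\bPhi(\lambda_k)^T$ is the orthogonal projector onto $\mathrm{col}\,\bPhi(\lambda_k)$ and therefore acts as the identity on $\mathrm{col}\,\bS_p^{(k)}$, which yields $\bPhi(\lambda_k)\bPhi(\lambda_k)^T\bS_p^{(k)}=\bS_p^{(k)}$; transposing and applying the same fact with $\bPsi(\lambda_k)$ gives $\bS_p^{(k)}\bPsi(\lambda_k)\bPsi(\lambda_k)^T=\bS_p^{(k)}$. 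Substituting both identities into the display above gives $\kappa(\lambda_k)=\bS_p^{(k)}$ for every $k=1,\dotsc,N$, as claimed.

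The only delicate point is the second step — that the target algorithm returns at a node a basis of the \emph{exact} subspace $\bom_k$ rather than merely a nearby one — but this is just the local inverse property $\Exp_{\bom}\circ\Log_{\bom}=\mathrm{id}$ on $\mathrm{U}_{\bom}$ combined with the exactness of Lagrange interpolation at interpolation nodes, both already available; everything else is a one-line matrix manipulation, so no genuine obstacle remains.
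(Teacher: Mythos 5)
Your proof is correct and follows essentially the same route as the paper: substitute $\bM(\lambda_k)=\bM_k$ to get $\kappa(\lambda_k)=\bPhi(\lambda_k)\bPhi(\lambda_k)^T\bS_p^{(k)}\bPsi(\lambda_k)\bPsi(\lambda_k)^T$, then use that $\bPhi(\lambda_k)\bPhi(\lambda_k)^T$ (resp.\ $\bPsi(\lambda_k)\bPsi(\lambda_k)^T$) is the orthogonal projector onto the subspace spanned by $\bPhi_p^{(k)}$ (resp.\ $\bPsi_p^{(k)}$). The only difference is that you re-derive the same-subspace fact from the exactness of Lagrange interpolation at the nodes and $\Exp_{\bom}\circ\Log_{\bom}=\mathrm{id}$ on $\mathrm{U}_{\bom}$, whereas the paper simply cites Remark~\ref{rem:Target_Not_Interpolated_Curve}, which contains exactly this statement.
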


\begin{proof}
	We need to check that $\kappa(\lambda_k)=\bS_p^{(k)}$ for each $k=1,\dotsc,N$. Now:
\begin{align*}
\kappa(\lambda_k)&=\bPhi(\lambda_{k})\bM(\lambda_k)\bPsi(\lambda_{k})^{T}=
\bPhi(\lambda_{k})\bM_k\bPsi(\lambda_{k})^{T} \\
&=\bPhi(\lambda_{k})\bPhi(\lambda_{k})^T\bS_{p}^{(k)}\bPsi(\lambda_{k})\bPsi(\lambda_{k})^{T} \\
&=\bPhi(\lambda_{k})\bPhi(\lambda_{k})^T\underbrace{\bPhi_{p}^{(k)} \bSig_{p}^{(k)}{(\bPsi_{p}^{(k)})}^{T}}_{\bS_{p}^{(k)}}\bPsi(\lambda_{k})\bPsi(\lambda_{k})^{T}
\end{align*}
where $\bPhi(\lambda_{k})\bPhi^T(\lambda_{k})$ corresponds to the projection matrix on the subspace $\bom_{k}:=\pi\left(\bPhi(\lambda_{k})\right)=\pi\left(\bPhi_{k}\right)$ (see Remark~\ref{rem:Target_Not_Interpolated_Curve}) so that
\begin{equation}
\bPhi(\lambda_{k})\bPhi^T(\lambda_{k})\bPhi_{p}^{(k)}=\bPhi_{p}^{(k)}
\end{equation}
and the same being true for the temporal part, we obtain the proof of the lemma. 
\end{proof}

The Space--Time interpolation algorithm is now given by:
\begin{algo}[Space--Time interpolation]\label{alg:Space_time_interpolation}

	\begin{itemize}
		\item[$\bullet$] \textbf{Inputs}:
		\begin{itemize}
			\item Generic matrices $\bS^{(1)},\dotsc,\bS^{(N)}$ in $\VecMatG{n}{m}$ ($m\leq n$), corresponding to parameter values $\lambda_{1}<\dotsc <\lambda_{N}$.
			\item A reference parameter value $\lambda_{i_0}$ with $i_0\in \{1,\dots,N\}$.
			\item A mode $p\leq m$. 
			\item A  parameter value $\widetilde{\lambda}$. 
		\end{itemize}
		\item[$\bullet$] \textbf{Output}: A matrix $\widetilde{\bS}\in \VecMat{n}{m}$. 
	\end{itemize}

\begin{enumerate}
\item Compute an oriented SVD on each matrix $\bS^{(k)}$ and write a POD of mode $p$ 
\begin{linenomath} \begin{equation*}
	\bS_p^{(k)}:=\bPhi_{p} \bSig_p^{(k)}(\bPsi_p^{(k)})^{T}\in \VecMat{n}{m}
\end{equation*} \end{linenomath}
with $\bPhi_p^{(k)}\in \StieComp{p}{n}$ and $\bPsi_p^{(k)}\in \StieComp{p}{m}$ uniquely defined.
\item Consider the target Algorithm~\ref{alg:Target_Alg} applied to 
the spatial parts $\bPhi_p^{(1)},\dotsc,\bPhi_{p}^{(N)}$, reference parameter value $\lambda_{i_0}$ and each of the $N+1$ parameter values $\lambda_{1},\dots,\lambda_{N},\wtlambda$, so from~\eqref{eq:Y_lambda} we can define matrices in $\StieComp{p}{n}$:
\begin{equation}\label{eq:Spatial_part}
	\bPhi(\lambda_{k}):=\bY(\lambda_{k}),\quad \bPhi(\wtlambda):=\bY(\wtlambda).
\end{equation}
\item Consider the target Algorithm~\ref{alg:Target_Alg} applied to the temporal
parts $\bPsi_p^{(1)},\dotsc,\bPsi_{p}^{(N)}$ reference parameter value $\lambda_{i_0}$ and each of the $N+1$ parameter values $\lambda_{1},\dots,\lambda_{N},\wtlambda$, so from~\eqref{eq:Y_lambda} we can define matrices in $\StieComp{p}{m}$:
\begin{equation}\label{eq:Time_part}
\bPsi(\lambda_{k}):=\bY(\lambda_{k}),\quad \bPsi(\wtlambda):=\bY(\wtlambda).
\end{equation}
\item For each $k=1,\dotsc,N$, define the square matrix of the mixed part
\begin{equation}\label{eq:Mixed_part}
\bM_{k}:=\bPhi(\lambda_{k})^T\bS_p^{(k)}\bPsi(\lambda_{k})\in \VecMat{p}{p}.
\end{equation}
\item Use a standard interpolation on square matrices $\bM_1,\dotsc\bM_N$, for instance:
\begin{equation}\label{eq:Coupled_part}
\bM(\wtlambda):=\sum_{i=1}^{N} \prod_{i \neq j} \frac{\wtlambda-\lambda_j}{\lambda_i-\lambda_j}\bM_i
\end{equation}
\item Using the spatial part $\bPhi(\wtlambda)\in \StieComp{p}{n}$ from~\eqref{eq:Spatial_part}, the temporal part $\bPsi(\wtlambda)\in \StieComp{p}{m}$ from~\eqref{eq:Time_part}, and the mixed part $\bM(\wtlambda)\in \VecMat{p}{p}$ from~\eqref{eq:Coupled_part}, the interpolated snapshot matrix corresponding to $\wtlambda$ is finally given by 
\begin{linenomath} \begin{equation*}
\widetilde{\bS}:=\bPhi(\wtlambda)\bM(\wtlambda)\bPsi(\wtlambda)^T\in \VecMat{n}{m}.
\end{equation*} \end{linenomath}
\end{enumerate}
\end{algo}

\section{Rigid-Viscoplastic FEM Formulation}
\label{sec:Rigid-Viscoplastic_FEM}

The main defining characteristic of the RVP formulation is that it neglects the elasticity effects. This \emph{idealization} is based on the fact that elastic components of strain remain small as compared with irreversible strains. This means that the additive decomposition of the total strain-rate tensor $\dot{\varepsilon}_{ij}=\dot{\varepsilon}^{e}_{ij}+\dot{\varepsilon}^{p}_{ij}$ simplifies to $\dot{\varepsilon}_{ij}=\dot{\varepsilon}^{p}_{ij}$, where $\dot{\varepsilon}^{e}_{ij}$ is the elastic component of the strain-rate tensor, $\dot{\varepsilon}^{p}_{ij}$ is the plastic component and $\dot{\varepsilon}_{ij}$ is the total strain-rate tensor.  Therefore, the RVP formulation turns out to be very similar to fluid flow problems\textcolor{blue}{,} and it is also called  \emph{flow formulation}~\cite{Hill1998}. Although it is not possible to calculate the residual stresses and the spring-back effect, the flow formulation presents several advantages. Unlike the elastoplastic FEM, the RVP formulation, even though more approximate, is more stable, simpler to be implemented in computer codes, and can use relatively larger time increments, thus improving the computational efficiency. A thorough overview of the foundation of the theory can be found in~\cite{Kobayashi1989,Chenot1992}.

\subsection{Governing Field Equations}

Classical rigid viscoplastic problems consider the plastic deformation of an isotropic body occupying a domain $\Omega \subset \mathbb{R}^{3}$. The domain $\Omega$ and its boundary $\partial \Omega$ represent the current configuration of a body according to the \emph{Updated Lagrangian} formulation. The governing equations that have to be satisfied are:

\noindent (a) Equilibrium condition:
\begin{linenomath} \begin{equation*}
\qquad\sigma_{ij,j}=0
\end{equation*} \end{linenomath}
\noindent (b) Compatibility conditions:
\begin{linenomath} \begin{equation*}
\qquad\dot{\varepsilon}_{ij}=\frac{1}{2}(v_{i,j}+v_{j,i})
\end{equation*} \end{linenomath}
\noindent (c) Yield criterion:
\begin{linenomath} \begin{equation*}
\bar{\sigma}:=\bigg(\frac{2}{3}\sigma'_{ij}\sigma'_{ij}\bigg)^{\frac{1}{2}} = \bar{\sigma}(\bar{{\varepsilon}},\dot{\bar{\varepsilon}},T)
\end{equation*} \end{linenomath}
\noindent (d) Constitutive equations:
\begin{equation}
\sigma'_{ij}=\frac{2}{3}\frac{\bar{\sigma}}{\dot{\bar{\varepsilon}}}\dot{\varepsilon}_{ij}, \qquad
\dot{\bar{\varepsilon}}=\bigg(\frac{2}{3}\dot{\varepsilon}_{ij}\dot{\varepsilon}_{ij}\bigg)^{\frac{1}{2}}
\label{eq:Constitutive_equations}
\end{equation}

\noindent (e) Incompressibility condition:	
\begin{linenomath} \begin{equation*}
\dot{\varepsilon}_{v}:=\dot{\varepsilon}_{kk}=0
\end{equation*} \end{linenomath}
\noindent (f) Boundary conditions:	
\begin{align*}
\textbf{v}=\hat{\textbf{v}}\qquad &\text{on}\qquad \partial \Omega_{v} \\
\textbf{F}=\hat{\textbf{F}}\qquad &\text{on}\qquad \partial \Omega_{F} \\
\text{friction and contact} \qquad &\text{on}\qquad \partial \Omega_{c}
\end{align*}
In the above equations $\pmb\sigma=(\sigma_{ij})$ is the stress tensor, $\dot{\pmb\varepsilon}=(\dot{\varepsilon}_{ij})$ is the strain rate tensor, $v_{i}$ are velocity components, $\bar{\sigma}$ is the effective stress, $\dot{\bar{\varepsilon}}$ is the second invariant of $\dot{\pmb\varepsilon}$ called effective strain rate, and $\pmb\sigma'=(\sigma'_{ij})$ is the deviatoric stress tensor defined by $\sigma'_{ij}=\sigma_{ij}-\delta_{ij}\sigma_{kk}/3$.

The hat symbol ~$\hat{}$~ denotes prescribed values. Generally, the boundary $\partial \Omega$ consists of three distinct parts: over $\partial \Omega_{v}$ velocity conditions are prescribed (essential boundary conditions), $\partial \Omega_{F}$ is the part where the traction conditions are imposed in the form of nodal point forces (natural boundary conditions), while the boundary conditions along $\partial \Omega_{c}$ are mixed, and neither the velocity nor the force can be described. Therefore, we have the disjoint union:

\begin{equation}
\partial \Omega=\partial \Omega_{v}\cup \partial \Omega_{F}\cup \partial \Omega_{c}
\end{equation}

\subsection{Variational form}

In a variational formulation, the functional $\Pi$ (energy rate) is defined by an integral form in accordance with the virtual work-rate principle

\begin{equation}
\Pi(v):=\int_{\Omega}\bar{\sigma}\dot{\bar{\varepsilon}}dV-\int_{\partial \Omega_{F}}F_{i}v_{i} dS
\label{eq:functional_equation}
\end{equation}

where the first term in~(\ref{eq:functional_equation}) represents the internal deformation work-rate, whereas the second term represents the work-rate done by the external forces. $F_{i}$ denotes prescribed surface tractions on the boundary surface $\partial\Omega_{F}$. Recalling the MarKov-Hill~\cite{Markov1948,Hill1948} variational principle, among all virtual (admissible) continuous and continuously differentiable velocity fields $v_{i}$ satisfying the conditions of compatibility and incompressibility, as well as the velocity boundary conditions, the real velocity field gives to the functional $\Pi$ a stationary value, i.e., the first-order variation vanishes. Moreover, in order to relax the incompressibility constraint condition $\dot{\varepsilon}_{v}=\dot{\varepsilon}_{kk}=0$ on an admissible velocity field, a classical penalized form is used

\begin{equation}
\delta \Pi:=\int_{\Omega}\bar{\sigma}\delta\dot{\bar{\varepsilon}}dV+\frac{1}{2}\int_{\Omega}K\dot{\varepsilon}_{v}\delta\dot{\varepsilon}_{v} dV-\int_{\partial \Omega_F} F_{i} \delta v_{i} dS = 0
\label{eq:functional_equation_penalized}
\end{equation}

where $K$ is a large positive constant which penalizes the dilatational strain-rate component. It can be shown that the mean stress is $\sigma_{m}=K\dot{\varepsilon}_{kk}$. 

\begin{remark}
A limitation of the Updated Lagrangian method for large deformation problems is the excessive element distortion. To this end, remeshing processes are necessary to simulate unconstrained plastic flows. A mesh generation process is activated in case of zero or negative determinant of the Jacobian matrix, or due to various element quality criteria. Then, a new mesh is calculated conforming to the current state of the geometry followed by an interpolation of the state variables between the old and the newly generated mesh.
Thus, the information of the remapping process has to adequately be transferred to the ROM basis obtained using the POD snapshot method. We remark that at this first attempt, we avoid remeshings of the workpiece during the course of the simulation. This topic will be addressed in a future investigation.
\end{remark}

\subsection{Discretization and iteration}

The discretization of the functional follows the standard procedure of the finite element method. Eq.~(\ref{eq:functional_equation_penalized}) is expressed in terms of nodal point velocities $v_{i}$ and their variations $\delta v_{i}$. Using the variational principle
\begin{equation}
\delta \Pi=\sum^{M}_{m=1}\frac{\partial\Pi^{(m)}}{\partial v_{i}} \delta v_{i}=0,\qquad i=1,2,...,2N_s,
\end{equation}
where $\delta v_{i}$ are arbitrary except that they must be zero to satisfy the corresponding essential boundary conditions, and $M$ denotes the number of elements. From the arbitrariness of $\delta v_{i}$, a set of algebraic equations (stiffness equations) are obtained

\begin{equation}
\frac{\partial\Pi}{\partial v_{i}}=\sum^{M}_{m=1}\frac{\partial\Pi^{(m)}}{\partial v_{i}}=0.
\label{eq:Nonlinear_algebric_equations}
\end{equation}

As the resulting algebraic equations are highly nonlinear, they linearized by the Taylor expansion near an assumed velocity field $\textbf{v} =\textbf{v}_{0}$ as

\begin{equation}
\frac{\partial\Pi}{\partial v_{i}} \Bigg|_{\textbf{v}=\textbf{v}_{0}}+ \frac{\partial^{2}\Pi}{\partial v_{i}\partial v_{j}}\Bigg|_{\textbf{v}=\textbf{v}_{0}}\Delta v_{j}=0
\label{eq:Linearization}
\end{equation}

where the first factor of the second term is also known as the Jacobian of the system (Hessian matrix), and $\Delta v_{j}$ is a first-order correction of the velocity component $v_{j}$. Solving~(\ref{eq:Linearization}) with respect to $\Delta v_{j}$, the assumed velocity field is updated by the form (written in vector notation)

\begin{equation}
\textbf{v}^{(i)}=\textbf{v}^{(i-1)} + \alpha (\Delta \textbf{v})^{(i)}
\end{equation}

where $0\leq\alpha\leq 1$ and $i$ is the  iteration step. The solution is obtained  by the Direct iteration method~\cite{Kobayashi1989,Oh1982} and/or by Newton-Raphson type methods. The iteration process is repeated until the following described convergence criteria are satisfied simultaneously
\begin{equation}
\frac{\parallel \Delta \textbf{v} \parallel_{L_2}}{\parallel\textbf{v}\parallel_{L_2}}\leq e_{1}, \qquad
\left\|\frac{\partial\Pi}{\partial \textbf{v}}\right\|_{L_2}\leq e_{2}
\label{eq:Error_norms}
\end{equation}

namely\textcolor{blue}{,} the velocity error norm and the norm of the residual equations, where $e_{1}$ and $e_{2}$ are sufficiently small specified tolerance numbers. 

\subsection{Heat Transfer Analysis}

In the present model, a thermodynamically sound derivation is adopted using the conservation of energy

\begin{equation}
-\rho c \frac{\partial T}{\partial t} + k \nabla^{2}T + \xi \bar{\sigma}\dot{\bar{\varepsilon}} = 0 
\label{heat_equation}
\end{equation}

where $\rho c$ is the volume-specific heat of the material, $\xi \bar{\sigma}\dot{\bar{\varepsilon}}$ represents the work heat rate per unit volume due to plastic deformation, $k$ is the thermal conductivity, $T$ is the temperature and $\xi$ is a coefficient that presents the fraction of the deformation energy dissipated into heat also known as the Taylor-Quinney coefficient.

In a weak form, and using the divergence theorem
\begin{equation}
- \int_{\Omega} \xi \bar{\sigma}\dot{\bar{\varepsilon}} \delta T dV + 
\int_{\Omega}  k \nabla T \delta (\nabla T) dV +
\int_{\Omega}  \rho c \frac{\vartheta T}{\vartheta t} \delta T dV - \int_{\partial \Omega} q_n  \delta T dS = 0
\label{heat_divergence}
\end{equation}

where 

\begin{equation}
q_n := k \frac{\partial T}{\partial n}
\end{equation}

is the heat flux across the boundary $\partial \Omega$ and $n$ denotes the unit normal vector to the boundary surface $\partial \Omega$.

In standard finite element books, e.g.~\cite{Zienkiewicz1974}, it can be seen that the heat balance equations such as~\eqref{heat_divergence}, upon finite element discretization are reduced to the form:

\begin{equation}
\bm{C} \dot{\bm{T}} + \bm{K} \bm{T} = \bm{Q}
\label{heat_matrix_form}
\end{equation}

where $\bm{C}$ is the heat capacity matrix, $\bm{K}$ denotes the heat conduction matrix, $\bm{Q}$ is the heat flux vector, $\bm{T}$ is the vector of nodal point temperatures, and $\dot{\bm{T}}$ is the rate of temperature increase vector of nodal points. 

The theory necessary to integrate~\eqref{heat_matrix_form} can be found in numerical analysis books~\cite{Ralston2001,Dahlquist2008}. It suffices to say that one-step time integration is used. The convergence of a scheme requires consistency and stability. Consistency is satisfied by a general time integration scheme

\begin{equation}
^{t+\Delta t}\bm{T} = ^{t}\bm{T} + \Delta t [(1- \theta) ^{t}\bm{\dot{T}} + \theta ^{t + \Delta t}\bm{\dot{T}}]
\label{eq:difference_schemes}
\end{equation}

where $\theta$ is a parameter varying between 0 and 1 ($\theta = 0$: Forward difference, $\theta = 1/2$: Crank-Nicholson, $\theta = 2/3$: Galerkin, $\theta = 1$: Backward difference).

\begin{remark}
\emph{Unconditional stability} is obtained for  $\theta \geq 0.5$. This is important, because it is desirable to take time steps as large as the deformation formulation allows, since this is the most expensive part of the process.
\end{remark}

\subsection{Computational Procedure for Thermo-Mechanical Analysis}

For solving \emph{coupled} thermomechanical problems, two different approaches can be used. In the traditional \emph{monolithic} approach, a single solver is in charge of the solution of the entire system of equations. In an alternative approach, the mechanical and thermal solvers deal respectively with the viscoplastic flow and the thermal field equations. Thus, in the so-called \emph{staggered solution} procedure used here, the state of the system is advanced by sequentially executing and exchange information between these two solvers~\cite{Felippa1980}. The equations for the mechanical analysis and the temperature calculation are \emph{strongly coupled}, thereby making necessary the simultaneous solution of the finite element counterparts~\cite{Kobayashi1989,Rebelo1980,Rebelo1980a}.

\section{Numerical Investigations}
\label{sec:Numerical_Investigations}

The purpose of this section is to evaluate the performance of the ST POD interpolation using the  velocity and temperature fields during the course of the simulation of the forming process. As a benchmark test case, a rectangular cross-section bar is compressed between two parallel flat dies under the condition of a constant shear friction factor $m$ at the die-workpiece interface. The initial workpiece has dimensions $h=20$ mm (height) and $w=20$ mm (width). Plane strain conditions are considered. Due to the symmetry of the problem, only one quarter of the cross-section is analyzed. The velocity of the upper and the lower die is set to $v=1$ mm/s. The initial temperature of the die and the workpiece is set to $T=25$ $^{\circ}$C. The bar is compressed until a 35\% reduction in height is achieved. The final simulation state is accomplished in 7-time steps with a constant time increment $\Delta t = 0.5$ s. One can observe the complexity of the nonuniform deformation  presented by the barreling of the free surface (Figure~\ref{fig:Deformation_patterns}). 
In our calculations, we employ a conventional rate-dependent power law to describe the material flow stress equation
\begin{equation}
\bar{\sigma}(\dot{\bar{\varepsilon}}) =1000\dot{\bar{\varepsilon}}^{0.1}\qquad \text{(MPa)}
\end{equation}	

The solution convergence is assumed when the velocity error norm and the force error norm (\ref{eq:Error_norms}) becomes less than $10^{-6}$. The type of element used is the linear isoparametric rectangular element with four-point integration. However, one point integration is used for the dilatation term, the second integral of the functional in~(\ref{eq:functional_equation_penalized}). This is known as the reduced integration scheme which imposes the volume constancy averaged over the linear rectangular element. The computational grid composed of 100 elements interconnected at $N_s=121$ nodes with 2 degrees of freedom, resulting in a global stiffness matrix of size 242$\times$242. For the rigid-viscoplastic analysis, the limiting strain rate $\dot{\bar{\varepsilon}}_{0}$ is chosen to be 0.01 and the penalty constant (or bulk modulus) $K$ is set to $10^{5}$.

Among the various models of friction, the one proposed in~\cite{Chen1978} is adapted to model the sliding contact at the tool-workpiece interface. This model allows the variation of the tangential traction with the relative velocity at the tool-workpiece interface
\begin{linenomath} \begin{equation*}
\textbf{t}_{f}= -mk \frac{\textbf{v}_{s}}{|\textbf{v}_{s}|} \simeq -mk \Bigg\{\frac{2}{\pi}\arctan\Bigg(\frac{|\textbf{v}_{s}|}{v_{0}}\Bigg)\Bigg\} \frac{\textbf{v}_{s}}{|\textbf{v}_{s}|}
\end{equation*} \end{linenomath}	
where $\textbf{v}_{s}$ is the relative velocity in the tangential direction between the tool and the workpiece, and $v_{0}$ is a positive constant several orders of magnitude smaller than $\textbf{v}_{s}$; $m$ is the friction factor $(0 < m < 1)$  and $k$ is the material shear yield stress $k=\bar{\sigma}/\sqrt{3}$. For the compression tests considered here, the relative tangential velocity at the tool-workpiece interface at the beginning of deformation is zero. The present analysis assumes that the friction factor remains constant throughout compression. Investigations on frictional shear stress measurements over the interface between a cylindrical workpiece and a die during plastic compression are reported in~\cite{Rooyen1960}. The basic characteristics of algorithms used in the RVP FEM analysis are summarized in Table~\ref{table:Numerical_algorithms}.

\small{
\begin{table}[H]
\begin{tabular}{p{4.0cm} p{8.0cm} }
\multicolumn{2}{p{13.2cm}}{\textbf{Basic characteristics of algorithms in RVP FEM}} \\
Type of problem & Two dimensional, plane strain, rigid viscoplastic material flow, isotropic, homogeneous \\
\hline
Thermomechanical problem solution & Loose coupling (staggered) - Backward Euler difference ($\theta = 1$)\\
\hline
Type of elements & 4-node quadrilateral isoparametric elements, bilinear shape functions \\
\hline
Flow stress equation & Power law: $\bar{\sigma}(\dot{\bar{\varepsilon}}) =c\dot{\bar{\varepsilon}}^{p}, \qquad c,p$ constants\\
\hline
Iteration method & Direct, BFGS with line search\\
\hline
Remeshing & N/A \\
\hline
Boundary conditions & Sliding friction on $S_{c}$\\
\end{tabular}
\caption{Numerical algorithms.}
\label{table:Numerical_algorithms}
\end{table}
}

\begin{remark}
Note that during the course of the simulation we avoid remeshing of the workpiece. As discussed in~\cite{Ryckelynck2009}, remeshing techniques can be taken into account provided that mesh transfer operations are applied to the reduced-basis.  
\end{remark}

\subsection{Mechanical field}

The first case for numerical illustration of the method considers the velocity field during the simulation of the forming process using the shear friction factor $m$ as the investigated parameter. From now on, let the parametric points corresponding to the shear friction factor $m$ denoted with  $\lambda$ for convenience with the previous sections. For the numerical study, the following training points are selected $\lambda \in \Lambda_t = \{0.1, 0.5, 0.9 \}$. The choice made here, is to use a minimum number of sampling points equi-distributed over the parametric range. The target point is set to $\wtlambda=0.3$. 
See the FEM solutions for the training  and target points at the final state of the computation in Figure~\ref{fig:Deformation_patterns}.

For each parametric simulation, a sequence of snapshots uniformly distributed over time using an increment of $\Delta t =0.5$ s is extracted for all nodes of the workpiece. The space-time snapshot matrices $\bS^{(i)} \in \VecMat{2N_s}{N_t}$ with $2N_s=242$ and $N_t=7$, corresponding to parameter values $\lambda_i$, are associated with the nodal velocity field in $x$ and $y$ directions.

For the parametric Space-Time interpolation, the  snapshot matrix  $\widetilde{\bS}$ of mode $p$ corresponding to the target point $\wtlambda$ is computed via the target Algorithm~\ref{alg:Space_time_interpolation}.  The target Algorithm~\ref{alg:Target_Alg} is applied to the spatial  $\bPhi_{p}^{(1)},\dotsc,\bPhi_{p}^{(N)}$ and temporal parts $\bPsi_{p}^{(1)},\dotsc,\bPsi_{p}^{(N)}$, with reference parameter value $\lambda_{i_0}=0.5$. In order to assess the interpolation acuracy, the snapshot matrix $\widetilde{\bS}$  is compared against the high-fidelity FEM solution by introducing the following a posteriori errors. Using the interpolated and the HF-FEM snapshot matrices $\widetilde{\bS}$ and $\bS^{\text{FEM}}$, respectively, the relative $L_2$-error measure is defined as
\begin{equation}
e_{L_2}(\widetilde{\bs}_i):= 
\frac{\Vert \widetilde{\bs}_i - \bs^{\text{FEM}}_i \Vert_{L_2}}{\Vert  \bs^{\text{FEM}}_i \Vert_{L_2}}, \quad
i=1,\dots,p \leq N_t.
\label{eq:L2_error_norm_HF}
\end{equation} 

Additionally, the relative Frobenius error norm of $\widetilde{\bS}$ and $\bS^{\text{FEM}}$ is defined as 
\begin{equation}
e_{F}(\widetilde{\bS}):= \Vert \mathbf{\widetilde{S}} - \mathbf{S}^{\text{FEM}} \Vert_{F} / \Vert \mathbf{S}^{\text{FEM}} \Vert_{F}.
\label{eq:Frobenious_error_norm}
\end{equation}

The eigenvalue spectrum of snapshot matrices $\bS^{(i)}$ corresponding to training points $\lambda_i \in \Lambda_t$ is exhibited in a semi-log scale in Figure~\ref{fig:Sing_values_magnitude}. We can observe that the distance between the first and the last eigenvalue is from 5 up to 6 orders of magnitude. Moreover, the percentage of energy $\mathcal{E}(k)= \sum_{i=1}^{k} \sigma^{2}_{i} / \sum_{i=1}^{N_t} \sigma^{2}_{i}$ captured from the POD modes is shown in Figure~\ref{fig:Energy_SVD}. It is evident that most of the $99.9 \%$ of the total energy is contained by the first two POD modes. 

The relative $L_2$-error norm $e_{L_2}(\widetilde{\bs}_i)$ (see~\eqref{eq:L2_error_norm_HF}) between the interpolated and the HF-FEM solution for various POD modes is displayed in Figure~\ref{fig:L2_norm_POD_modes}.
In general, the relative error for all POD modes lie within a range of 0.0175 up to 0.038. It can be observed that the interpolated ST POD solution delivers good accuracy and is reliable enough to predict the velocity field for the investigated target point.

\begin{remark}
In the case of using $p=7$ POD modes for the temporal basis interpolation, the Grassmannian manifold $\mathcal{G}(p,p)$ \emph{reduces to one point}, so it is not relevant to use the target Algorithm~\ref{alg:Target_Alg}: any new parameter value will give rise to the same matrix $\bPsi_{i_0}$ in the associated compact Stiefel manifold, corresponding to the reference point.
\end{remark}

Additionally, the position vector error $e_{L_2}(\mathbf{\widetilde{x}}(t)) = \Vert \mathbf{\widetilde{x}}(t)-\mathbf{x}^{\text{FEM}}(t) \Vert_{L_2}$ at the nodal points is computed for $p=$2,3,5 and 7 POD modes, where $\mathbf{\widetilde{x}}(t)$ and $\mathbf{x}^{\text{FEM}}(t)$ denotes the position vector of the ST POD and the high-fidelity FEM solutions, respectively, at the time increments during the deformation. Figure~\ref{fig:Snapshot_L2_norm_POD_modes_state_7} presents the local error $e_{L_2}(\mathbf{\widetilde{x}}(t))$ superimposed at the final loading state $t=0.35$ s obtained from the high-fidelity FEM solution. Different patterns of the spatial error distribution can be observed concerning the number of POD modes $p$. It is interesting to observe that in both cases, the maximum error is located near the upper-right location of the deforming workpiece. 

The evolution of the deformation process can be also represented using the time-displacement histories of some selected nodes of the workpiece (Figure~\ref{fig:Nodal_time_displacement_histories}). The ST POD predictions are compared against the high-fidelity FEM counterpart solution using $p=2$ POD modes. Again, it can be observed that the interpolated ST POD solution is accurate and reliable to predict the evolution of the displacement field for the investigated target point during the forming process. 

For the preceding numerical investigations, the ST POD efficiency is demonstrated using a single target point, i.e., $\widetilde{\lambda}=0.3$. To further assess the interpolation performance, a new target point is now considered,  $\widetilde{\lambda}=0.8$. Interpolation is performed using the same set of training points $\lambda \in \Lambda_t = \{0.1, 0.5, 0.9 \}$, with reference parameter value $\lambda_{i_0}=0.5$. The relative $L_2$-error norm $e_{L_2}(\widetilde{\bs}_i)$ for various POD modes $p$ corresponding to target point $\widetilde{\lambda}=0.8$ is shown in  Figure~\ref{fig:L2_norm_POD_modes_new_interp_target_m_08}. Again, one can observe that the relative error lies within a narrow range of the values, i.e., 0.014 up to 0.026.


\subsection{Temperature field}

To further investigate the performance of the proposed ST POD interpolation, the temperature field obtained from the coupled thermomechanical simulation of the forming process is considered. Again, for the temperature field, we consider the shear friction factor $m$ as the investigated system parameter. The training points selected for the mechanical field analysis are also used in this study, i.e., $\lambda \in \Lambda_t = \{0.1, 0.5, 0.9 \}$. The target point is set to $\wtlambda=0.3$. For each parametric problem, snapshots are uniformly distributed over time using an increment step size  $\Delta t =0.5$ s. The final deformation state is reached at $t=0.35$ s. The space-time snapshot matrices  $\bS
^{(i)}\in \VecMat{N_s}{N_t} $ of size $121 \times 7$, corresponding to $\lambda_i$, are associated with nodal temperatures. We will now compare the Space-Time interpolation (see Algorithm~\ref{alg:Space_time_interpolation}) against the high-fidelity FEM solution. Again, for the target Algorithm~\ref{alg:Target_Alg} applied to the spatial  $\bPhi_{p}^{(1)},\dotsc,\bPhi_{p}^{(N)}$ and temporal parts $\bPsi_{p}^{(1)},\dotsc,\bPsi_{p}^{(N)}$, the reference parameter value $\lambda_{i_0}=0.5$ is used.

Figure~\ref{fig:Temperature_profiles} presents the temperature profiles at the final compression state obtained using different values of the shear friction factor $m$ (represented by parameter $\lambda$). The temperature rises due to plastic work conversion to heat assuming a constant value for the Taylor-Quinney coefficient $\xi = 0.9$. In all cases, the maximum temperature is located at the center of the workpiece with values ranging from $T=89.5$ $^{\circ}$C up to $T=98$ $^{\circ}$C.   

The eigenvalue spectrum of snapshot matrices $\bS^{(i)}$ corresponding to training points $\lambda_i \in \Lambda_t$ is shown in a semi-log scale in Figure~\ref{fig:Sing_values_magnitude_temperature}. We can observe that the distance between the first and the last eigenvalue of the curves is of the order of 5 up to 6 orders of magnitude. Moreover, the system energy $\mathcal{E}(k)= \sum_{i=1}^{k} \sigma^{2}_{i} / \sum_{i=1}^{N_t} \sigma^{2}_{i}$ captured from the POD modes is shown in Figure~\ref{fig:Energy_SVD_temperature}. Most of the $99.9 \%$ of the total energy is contained by the first two POD modes. 

The relative $L_2$-error norm $e_{L_2}(\widetilde{\bs}_i)$ \eqref{eq:L2_error_norm_HF} between the interpolated and the HF-FEM snapshot matrices $\widetilde{\bS}$ and $\bS^{\text{FEM}}$, respectively, for various modes $p$ is shown in Figure~\ref{fig:L2_norm_POD_modes_temperature}. Additionally, the Frobenius relative error norm  \eqref{eq:Frobenious_error_norm} for the POD modes is presented in Figure~\ref{fig:Frobenius_error_norm_temperature}. In general, the obtained results are found to have less than 1$\%$ relative error for POD modes $p>1$ and therefore are acceptable as fast near real-time numerical predictions. 

Finally, Figure~\ref{fig:Nodal_time_temperature_histories} shows the ST POD time-temperature histories for some selected nodes of the workpiece using $p=7$ modes. The predictions are compared against the high-fidelity counterpart solution,
and it is difficult to distinguish differences
among these plots. It is revealed that the interpolated ST POD solution delivers good accuracy for all selected nodes.

\subsection{Computational complexity}
The computational cost of the ST POD interpolation scales with the computational complexity of SVD and the matrix operations in the target ST Algorithm~\ref{alg:Space_time_interpolation}. It is evident, that the cost of ST POD interpolation will be lower compared to the standard POD Galerkin nonlinear approaches and even  lower than the full order FEM solution. The coupled thermomechanical FEM simulation for the target point takes 35.123 seconds in wall-clock time. On the other hand, the ST interpolation for the mechanical problem using a ROM POD basis of mode $p=4$ results in 0.147 seconds in wall-clock time. The ST interpolation for the thermal problem using a ROM POD basis of mode $p=4$ results in 0.153 seconds in wall-clock time. Therefore, the total ST interpolation takes 0.3 seconds in wall-clock time corresponding to a time speed-up of 116.96. All experiments in this section were implemented in Matlab and run on a 4th Generation Intel(R) Core(TM) i7-4600U CPU @ 2.10GHz, 8GB RAM, 250 GB SSD, Debian 9 x64.

\section{Conclusions}
\label{sec:Conclusions}

A novel non-intrusive Space-Time POD basis interpolation scheme on compact Stiefel manifolds is developed and applied to parametric high nonlinear metal forming problems. Apart from the separate interpolation of POD spatial and temporal basis on associated Grassmannian manifolds, an interpolation function is defined on a set of parametric snapshot matrices. This function results from curves, which are defined on compact Stiefel manifolds both for space and the temporal part, and also the use of some mixed part encoded by a square matrix. This latter matrix provides a link between the interpolated space and temporal basis for the construction of the target ROM snapshot matrix. To prove the efficiency of the method it has been used a coupled thermomechanical rigid-viscoplastic FEM formulation which is integrated into the manufacturing industry in a variety of applications. The performed numerical investigations have considered the reconstruction of the ROM snapshot matrices both of the velocity and the temperature fields. Moreover, the error norms of the Space-Time POD interpolated ROM models concerning the  associated high-fidelity FEM counterpart solutions are validating the accuracy of the proposed interpolation scheme. In conclusion, the overall results demonstrate the potential use of the proposed ST POD interpolation scheme for near real-time parametric simulations using off-line computed ROM POD databases, supporting thus manufacturing industries to accelerate design-to-production timespans, and thereby reducing costs while ensuring the design of superior processes.

\vfill
\pagebreak

\begin{figure}[H]
\resizebox{0.95\textwidth}{!}{%
\includegraphics{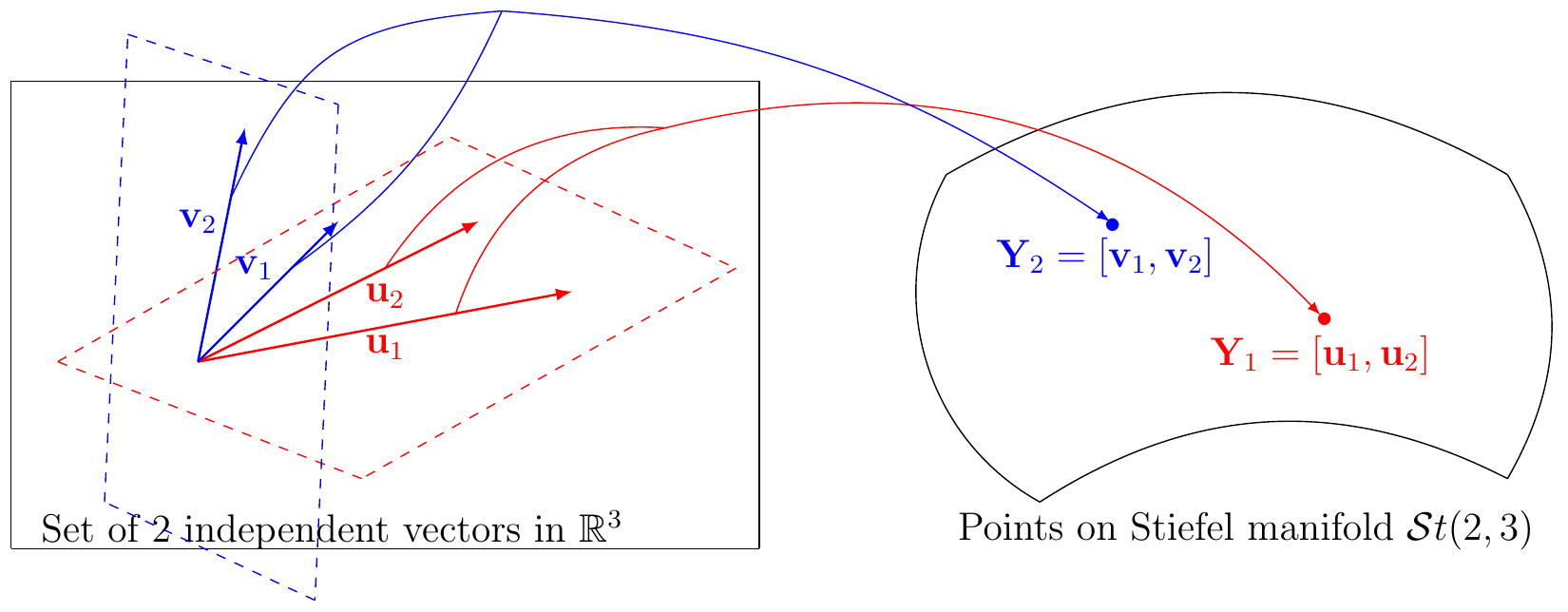}
}
\caption{Points on Stiefel manifold. The linearly independent vectors in $\mathbb{R}^3$ spanning the red and blue planes correspond to points in $\StieComp{2}{3}$.  
}
\label{fig:Two_Points_Stiefel}
\end{figure}

\begin{figure}[H]
\resizebox{0.97\textwidth}{!}{%
\includegraphics{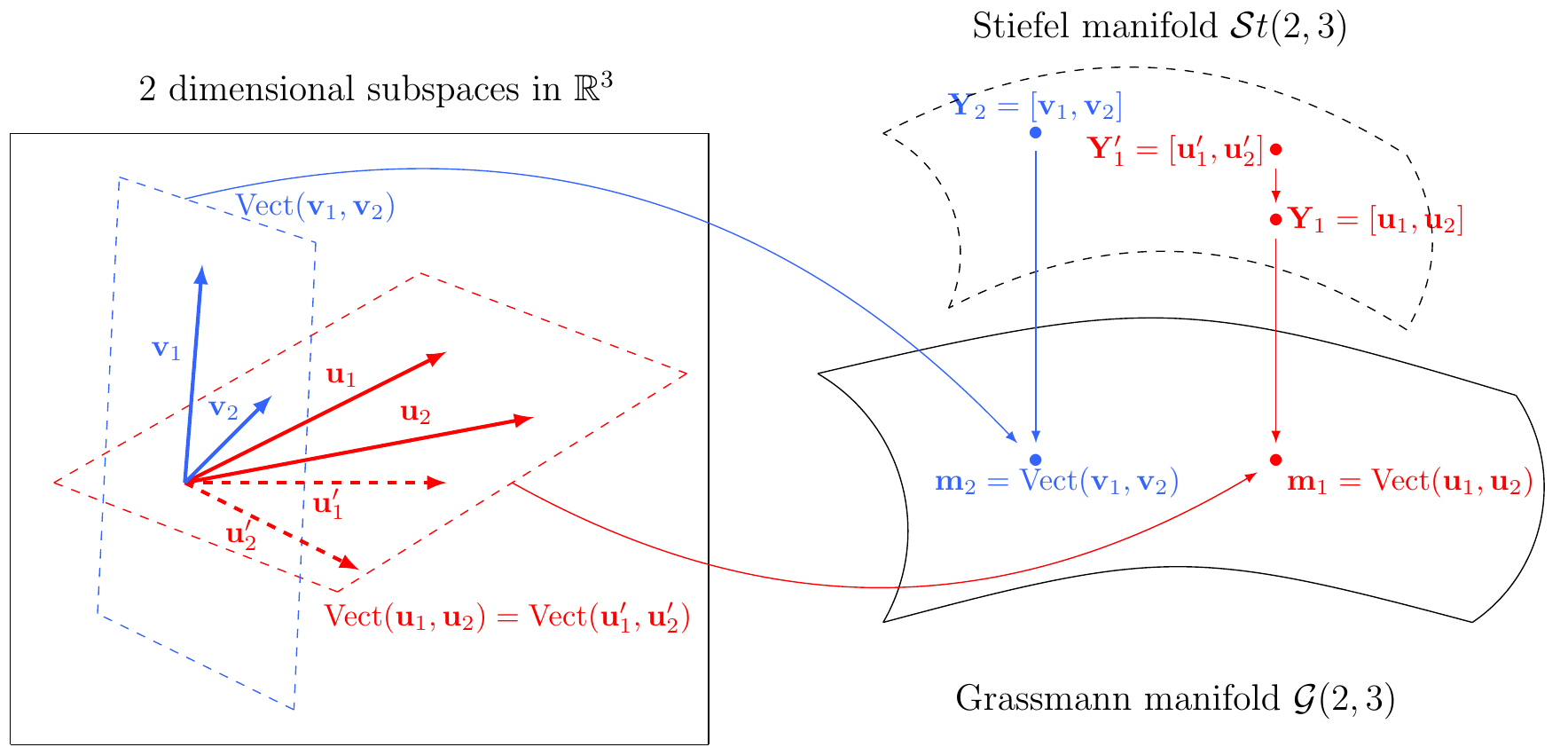}
}
\caption{Points on Stiefel $\mathcal{S}t(2,3)$ and Grassmann manifold $\mathcal{G}(2,3)$.}
\label{fig:Two_Points_Grassmann}  
\end{figure}

\begin{figure}[H]
	\resizebox{0.60\textwidth}{!}{%
		\includegraphics{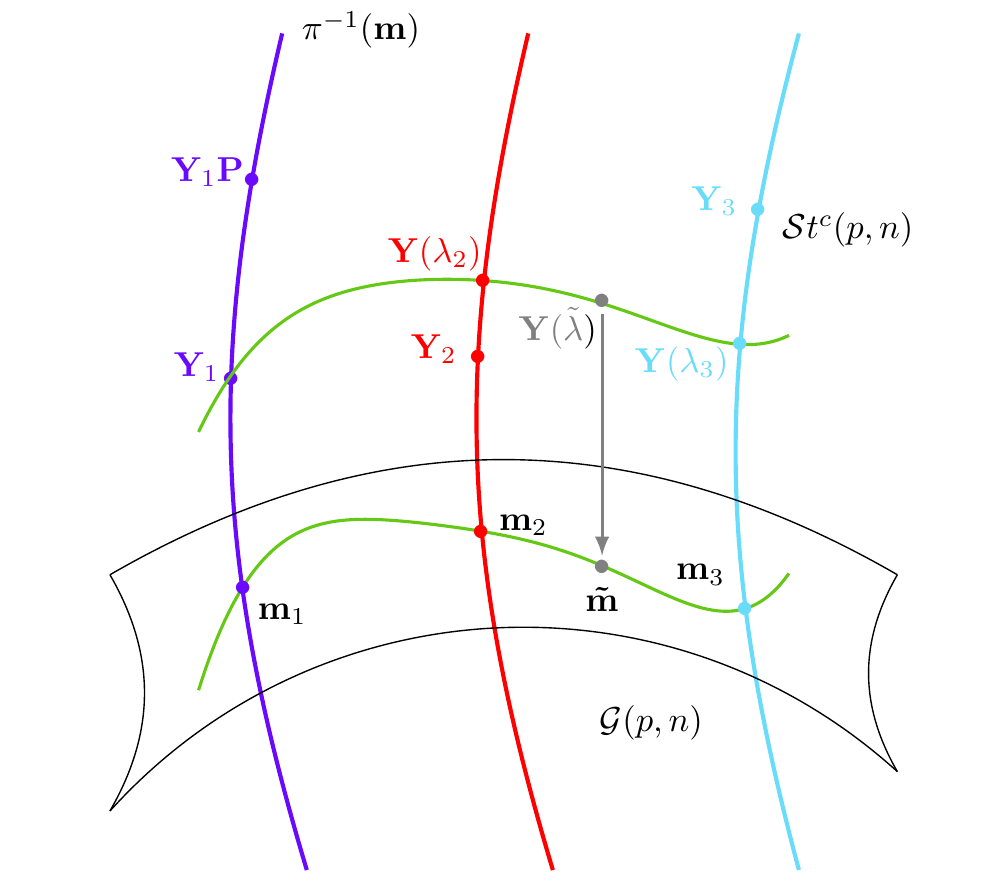}
	}
	\caption{There is a natural projection $\pi: \StieComp{p}{n}\longrightarrow \mathcal{G}(p,n)$ from the compact Stiefel manifold $\StieComp{p}{n}$ to the Grassmannian $\mathcal{G}(p,n)$ of $p$-dimensional subspaces in $\mathbb{R}^n $ which sends a $p$-frame to the subspace spanned by that frame. The fiber over a given point $\bom$ on $\mathcal{G}(p,n)$ is the set of all orthonormal $p$-frames 
		spanning the subspace $\bom$. Computations on $\StieComp{p}{n}$ using the target Algorithm~\ref{alg:Target_Alg} for  $\lambda:=\lambda_{k}$, lead to some matrix $\bY(\lambda_{k})$ generally different from $\bY_{k}$ (except for the reference point), and thus do not produce an interpolation on the points $\bY_{1},\dotsc,\bY_{N}$. }
	\label{fig:Grasmann_Quotient}
\end{figure}

\begin{figure}[H]
\resizebox{0.70\textwidth}{!}{%
\includegraphics{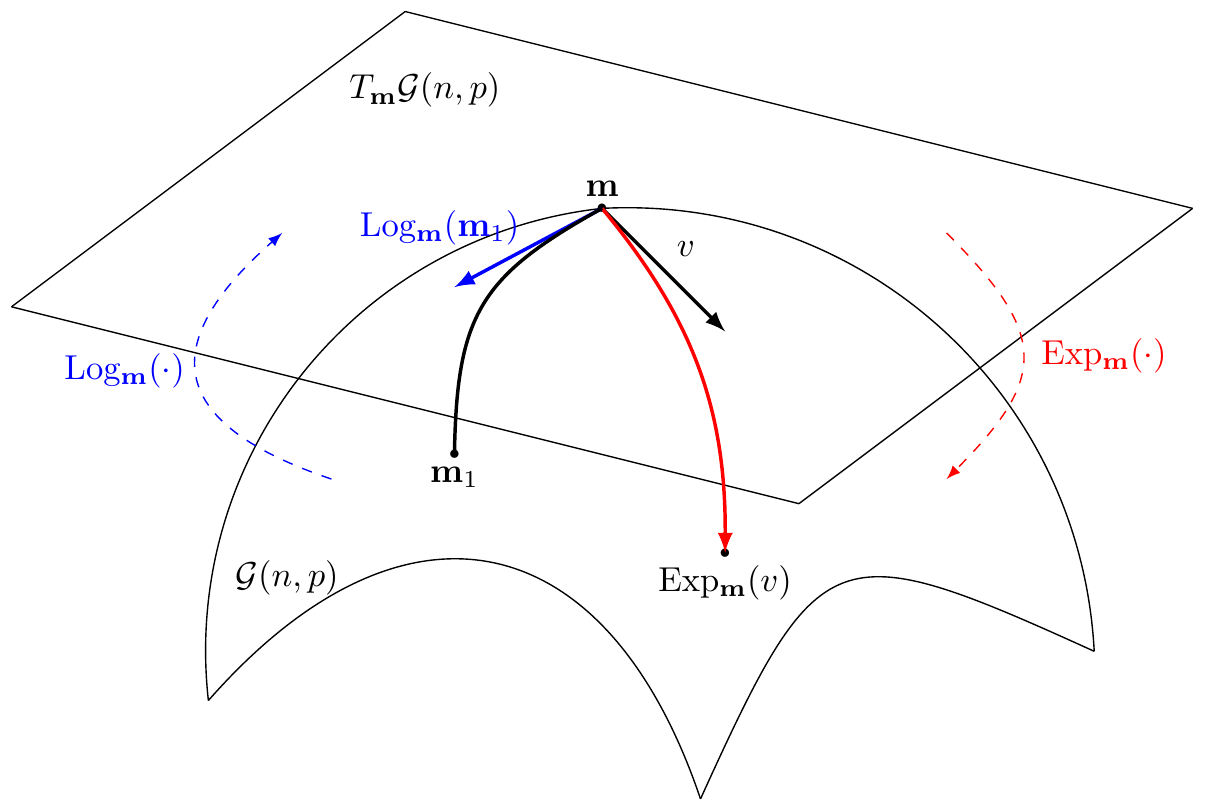}
}
\caption{The exponential  $\Exp_{\bom}$ and the logarithm $\Log_{\bom}$ map on the Grassmann manifold $\mathcal{G}(p,n)$.}
\label{fig:Log_Exp_map}  
\end{figure}

\begin{figure}[H]
\resizebox{0.97\textwidth}{!}{%
\includegraphics{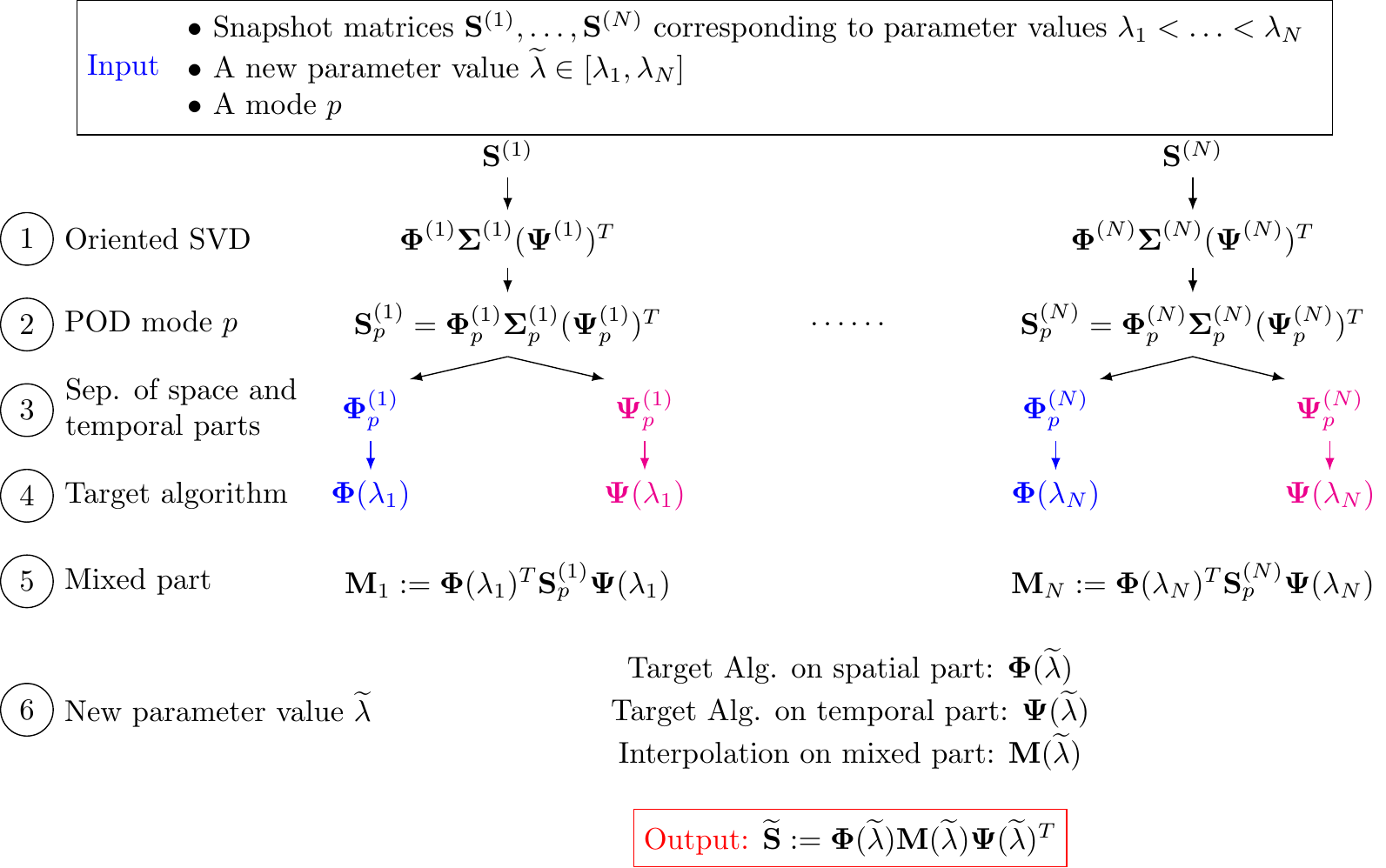}
}
\caption{The Space-Time Algorithm.}
\label{fig:Space_Time_Algo}  
\end{figure}

\begin{figure}[H]
	\begin{subfigure}{.48\textwidth}
		\centering
		\includegraphics[width=\textwidth]{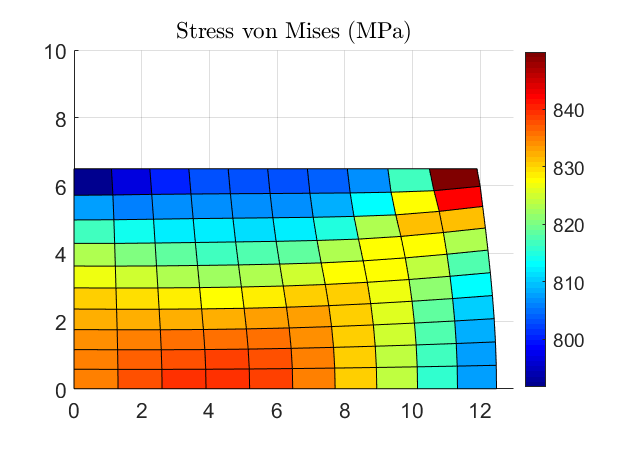}
		\vspace{-2\baselineskip}
		\caption{For $\lambda  = 0.1$}
		\label{fig:m_01}
	\end{subfigure}
	\begin{subfigure}{.48\textwidth}
		\centering
		\includegraphics[width=\textwidth]{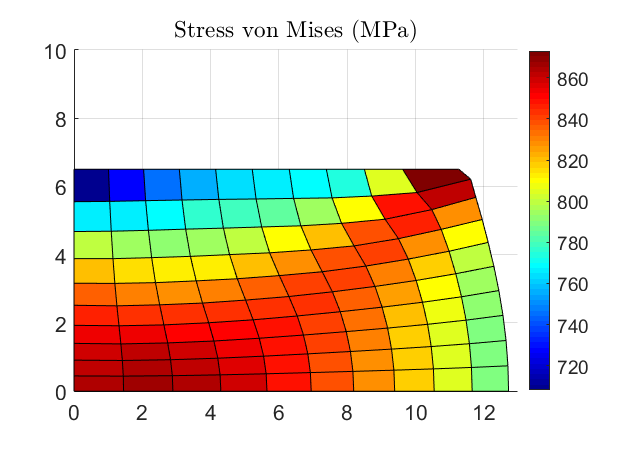}
		\vspace{-2\baselineskip}
		\caption{For $\lambda = 0.3$}
		\label{fig:m_03}
	\end{subfigure}
	\begin{subfigure}{.48\textwidth}
		\centering
		\includegraphics[width=\textwidth]{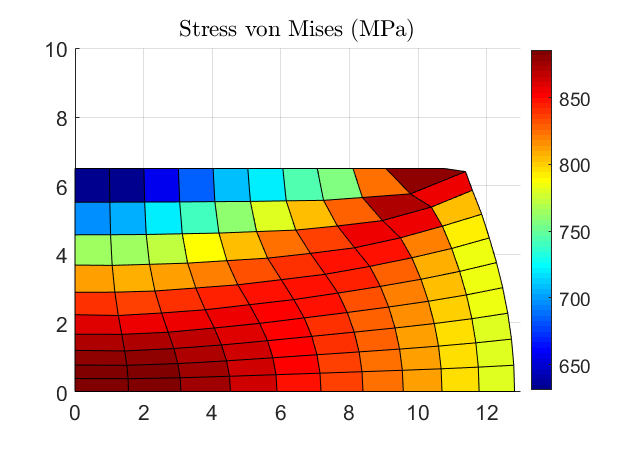}
        \vspace{-2\baselineskip}
		\caption{For $\lambda  = 0.5$}
		\label{fig:m_05}
	\end{subfigure}
	\begin{subfigure}{.48\textwidth}
		\centering
		\includegraphics[width=\textwidth]{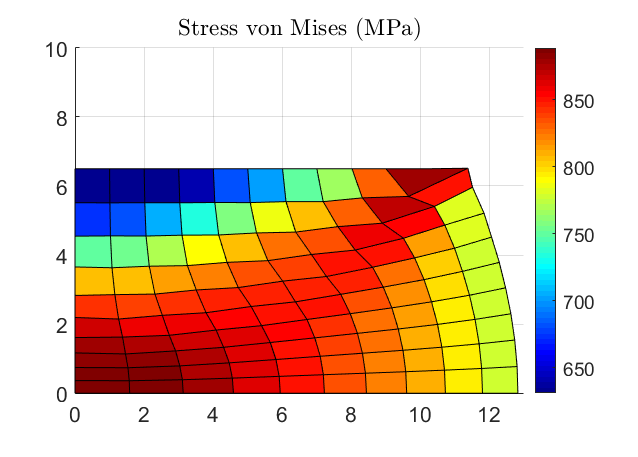}
		\vspace{-2\baselineskip}
		\caption{For $\lambda  = 0.9$}
		\label{fig:m_09}
	\end{subfigure}
	\caption{Deformation patterns of the benchmark metal forming example using different values for the shear friction factor $m$ represented by the parameter $\lambda$.}
	\label{fig:Deformation_patterns}
\end{figure}

\begin{figure}[H]
\resizebox{0.60\textwidth}{!}{%
\includegraphics{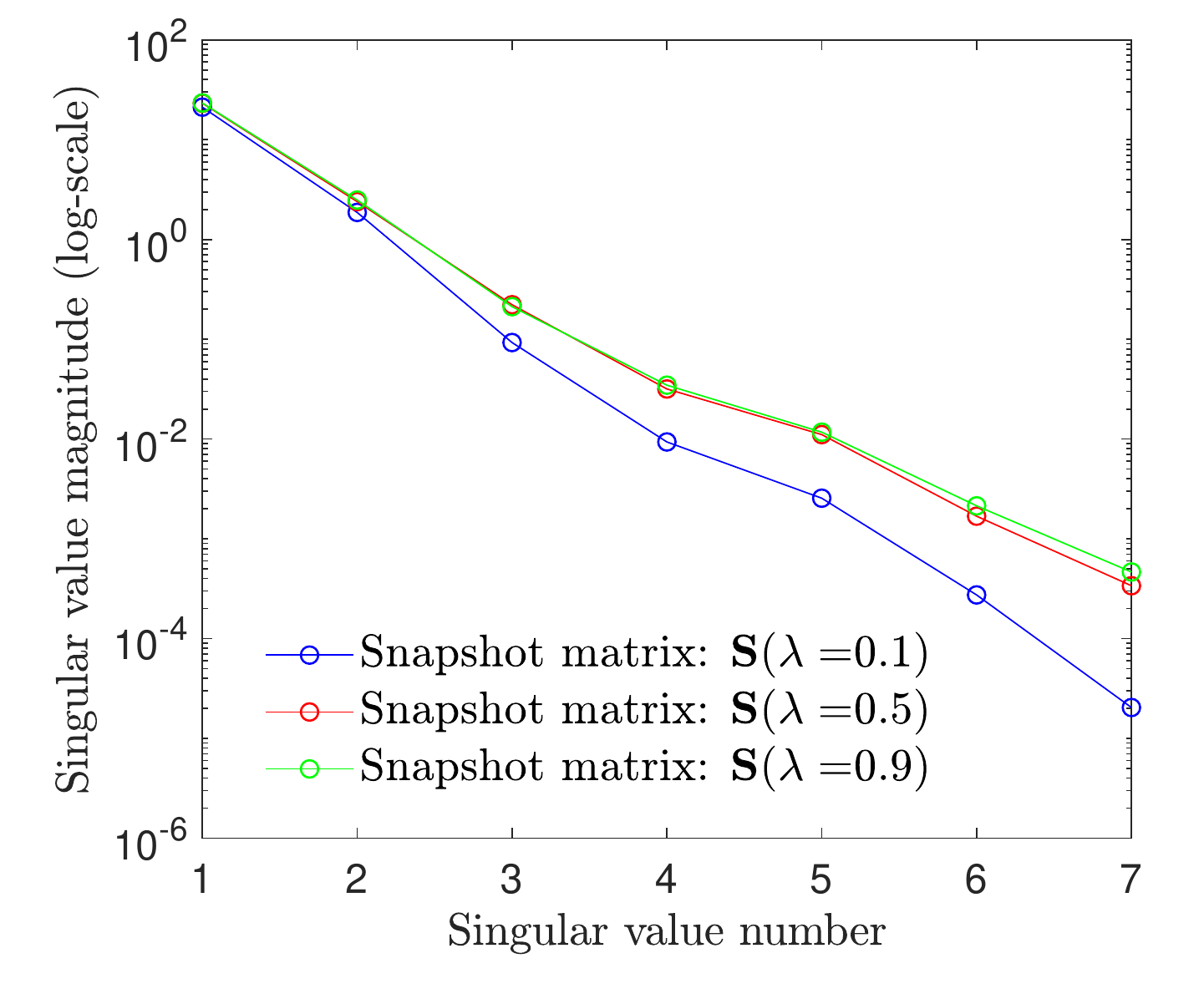}
}
\caption{The eigenvalue spectrum of snapshot matrices $\bS^{(i)}$ corresponding to training points $\lambda \in \Lambda_t = \{0.1, 0.5, 0.9 \}$.}
\label{fig:Sing_values_magnitude}  
\end{figure}

\begin{figure}[H]
\resizebox{0.60\textwidth}{!}{%
\includegraphics{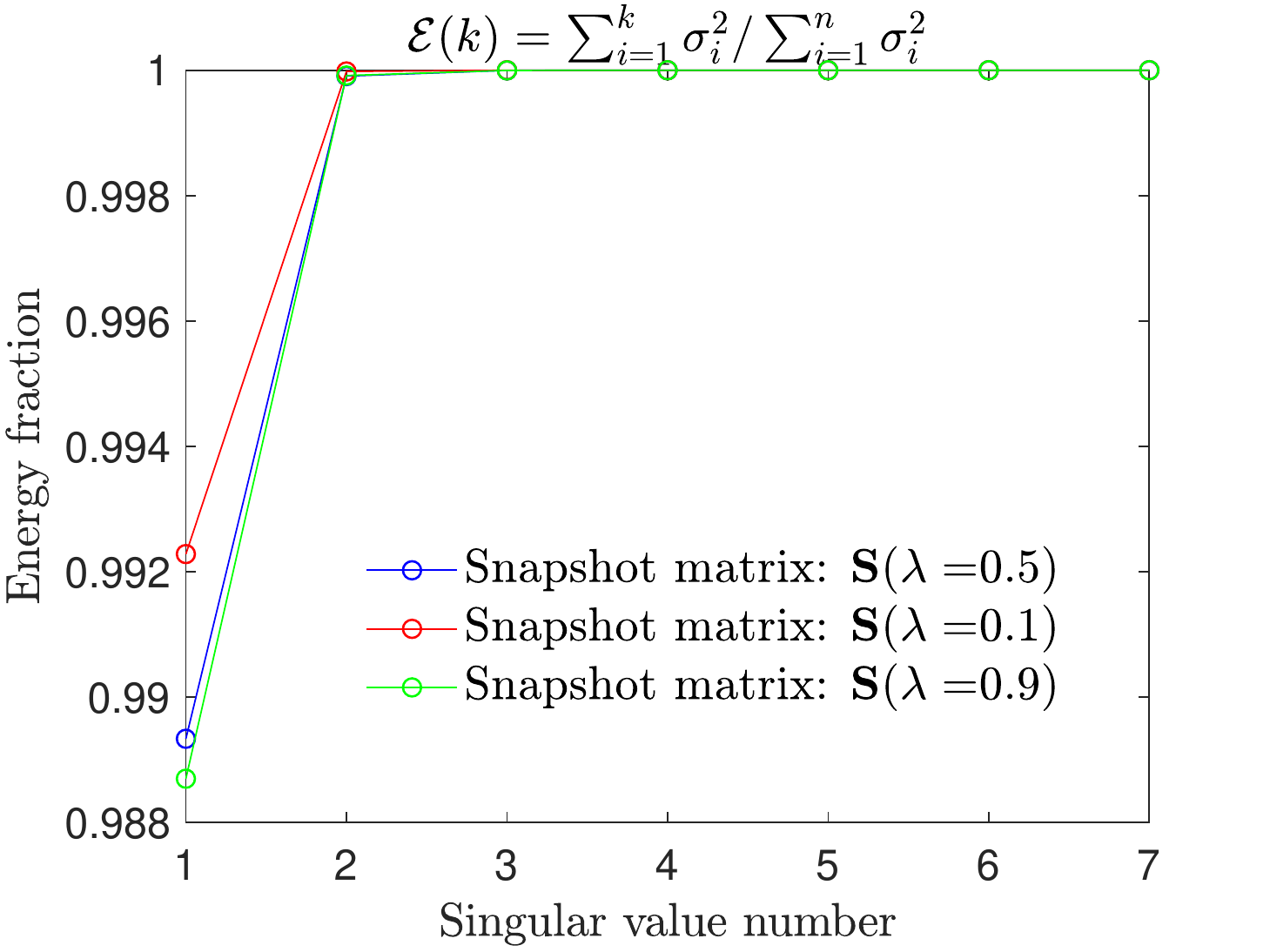}
}
\caption{Energy captured by the singular values of snapshot matrices $\bS^{(i)}$ corresponding to training points $\lambda \in \Lambda_t = \{0.1, 0.5, 0.9 \}$.}
\label{fig:Energy_SVD}  
\end{figure}

\begin{figure}[H]
\resizebox{0.60\textwidth}{!}{%
\includegraphics{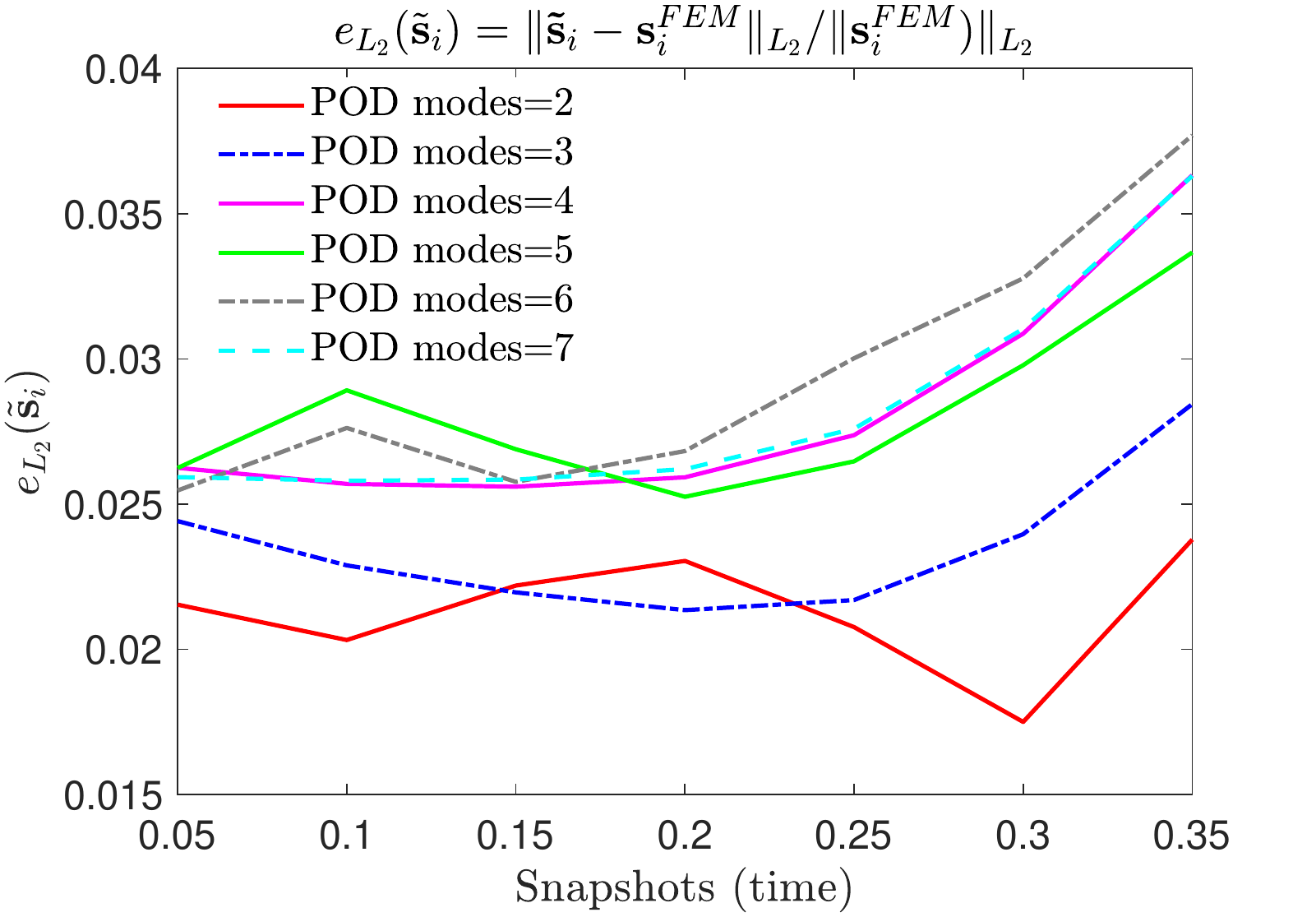}
}
\caption{Performance of ST POD interpolation using the relative $L_2$-error norm  $e_{L_2}(\widetilde{\bs}_i)$ for various  modes $p$; 
 training points $\lambda \in \Lambda_t = \{0.1, 0.5, 0.9 \}$; reference parameter value $\lambda_{i_0}=0.5$;  target point $\widetilde{\lambda}=0.3$.}
\label{fig:L2_norm_POD_modes}
\end{figure}

\begin{figure}[H]
\begin{minipage}[b]{0.48\linewidth}
\centering
\includegraphics[width=\textwidth]{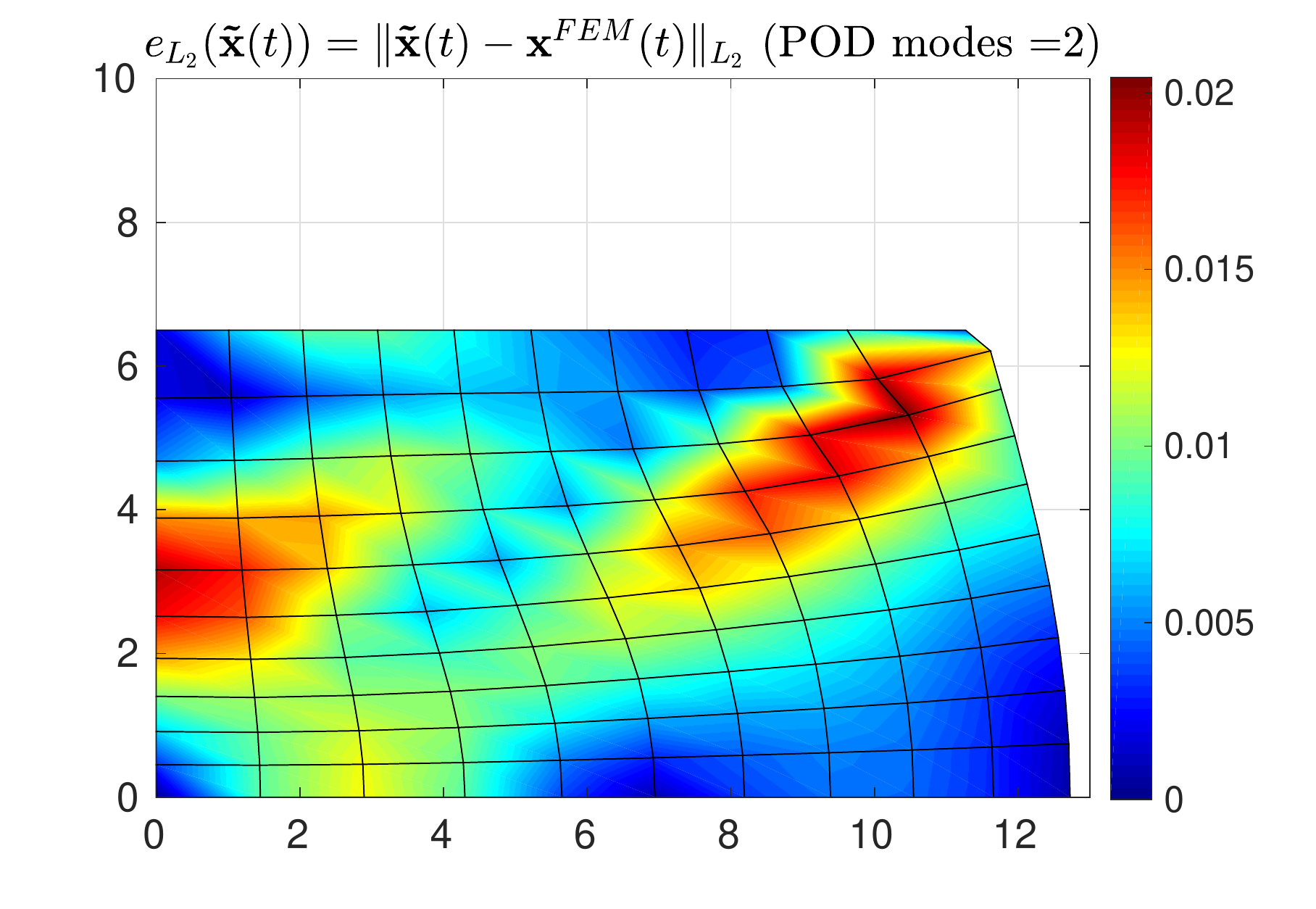}
\end{minipage}
\begin{minipage}[b]{0.48\linewidth}
\centering
\includegraphics[width=\textwidth]{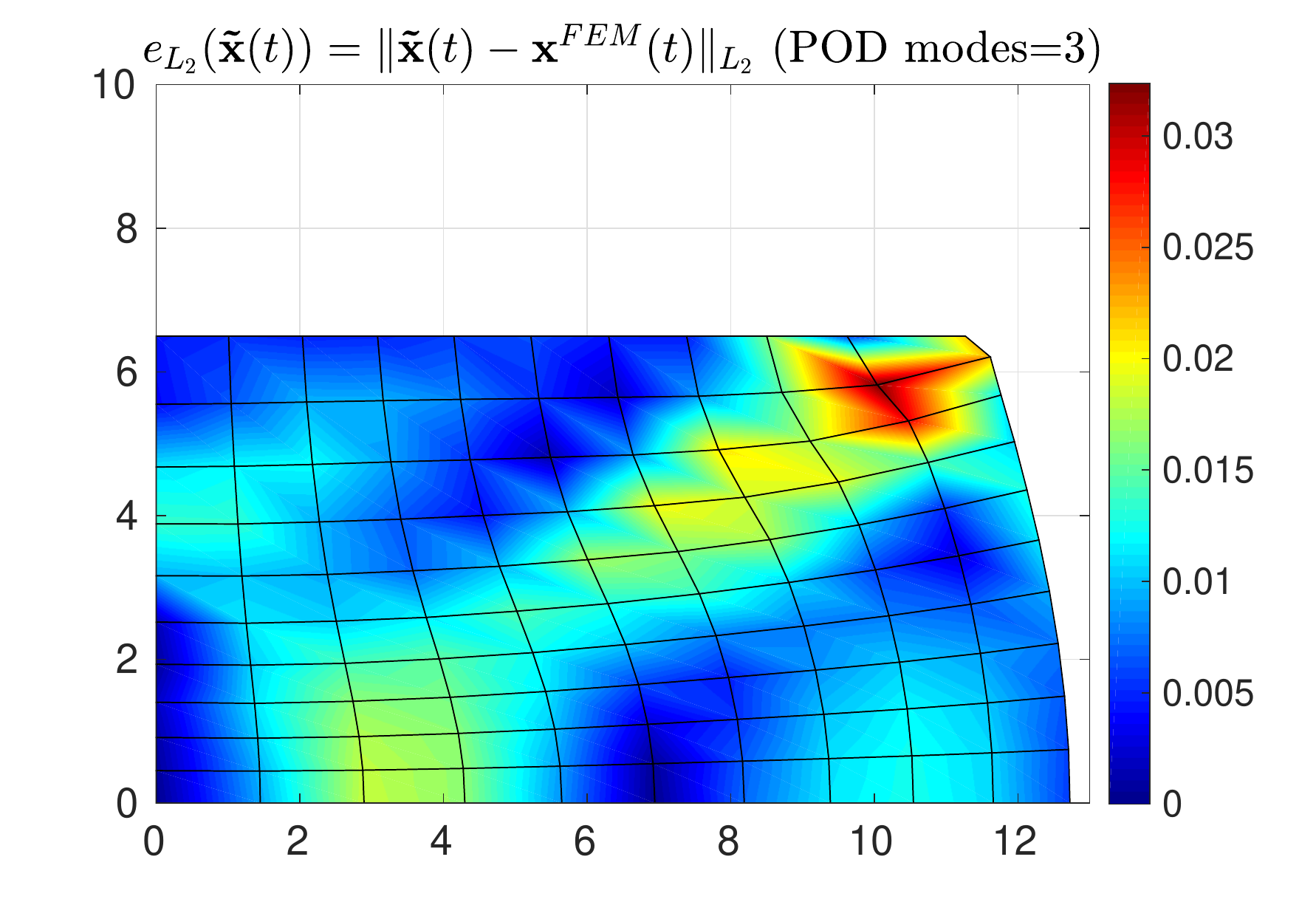}
\end{minipage}

\begin{minipage}[b]{0.48\linewidth}
\centering
\includegraphics[width=\textwidth]{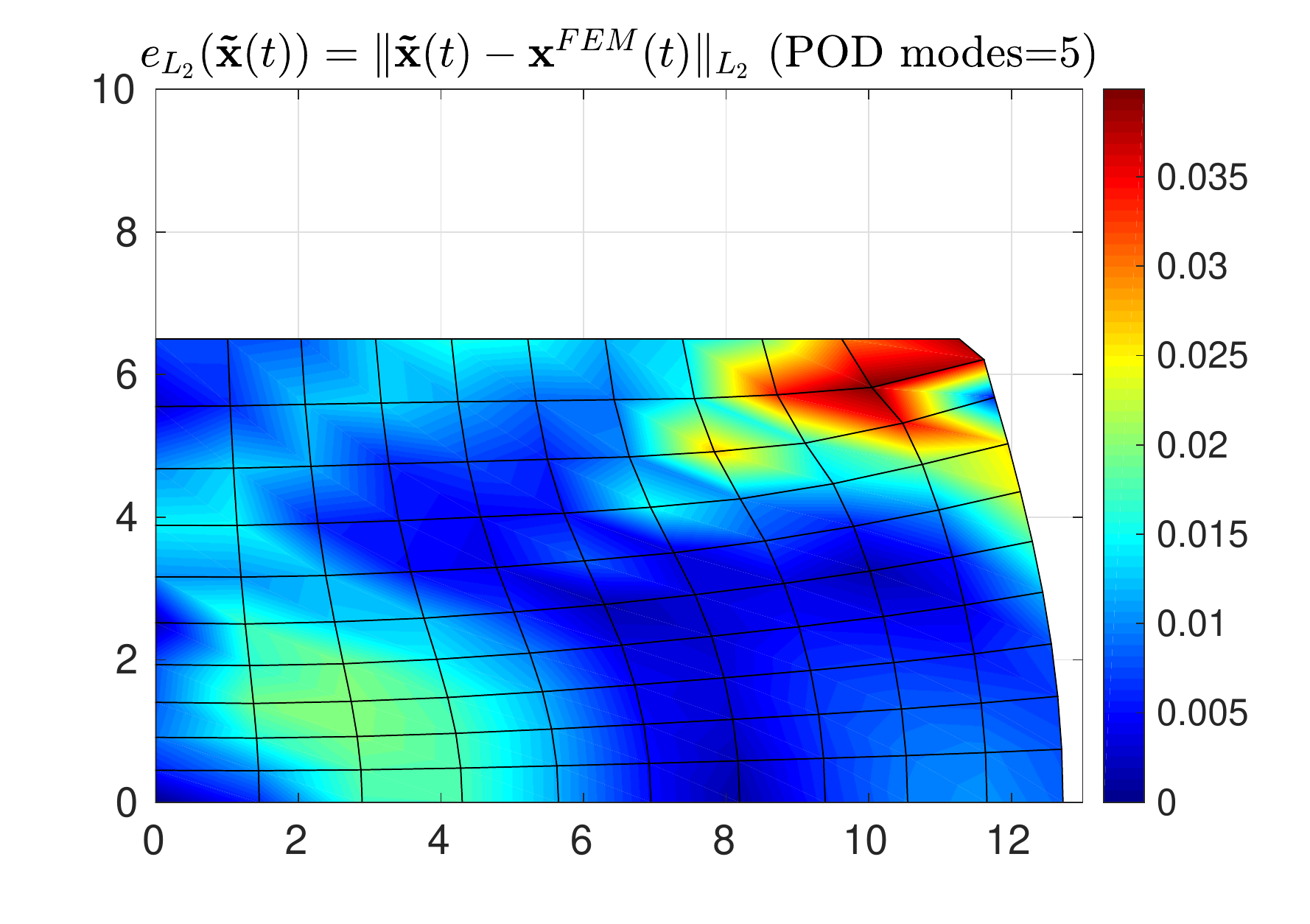}
\end{minipage}
\begin{minipage}[b]{0.48\linewidth}
\centering
\includegraphics[width=\textwidth]{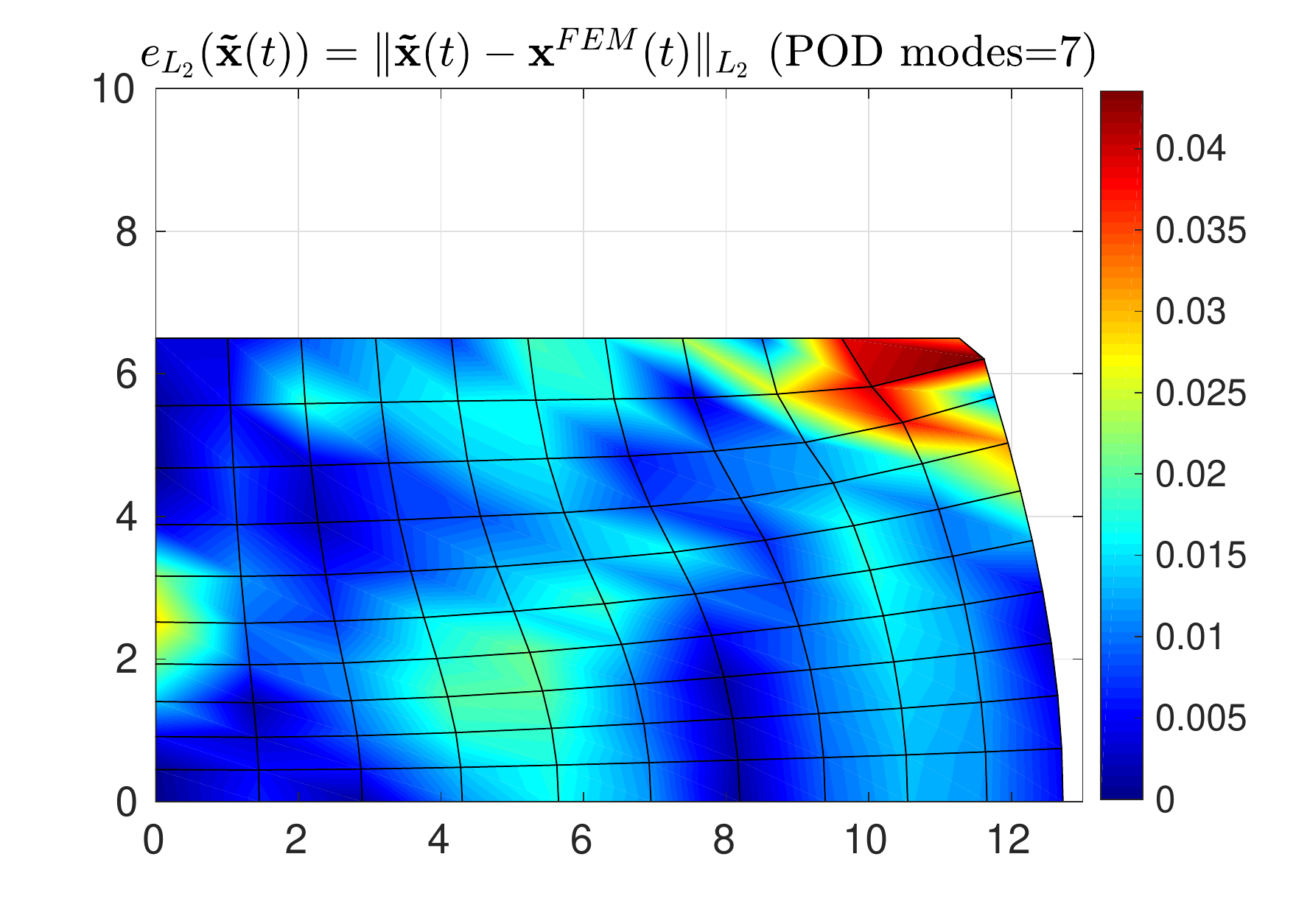}
\end{minipage}
\caption{The position vector error $e_{L_2}(\mathbf{\widetilde{x}}(t)) = \Vert \mathbf{\widetilde{x}}(t)-\mathbf{x}^{\text{FEM}}(t) \Vert_{L_2}$ of the nodal points at the final deformation state $t=0.35$ s superimposed on the high-fidelity FEM solution; POD modes $p= \{2,3,5,7\}$; training points $\lambda \in \Lambda_t = \{0.1, 0.5, 0.9 \}$; reference parameter value $\lambda_{i_0}=0.5$;  target point $\widetilde{\lambda}=0.3$.}
\label{fig:Snapshot_L2_norm_POD_modes_state_7}
\end{figure}

\begin{figure}[H]
\resizebox{0.60\textwidth}{!}{%
\includegraphics{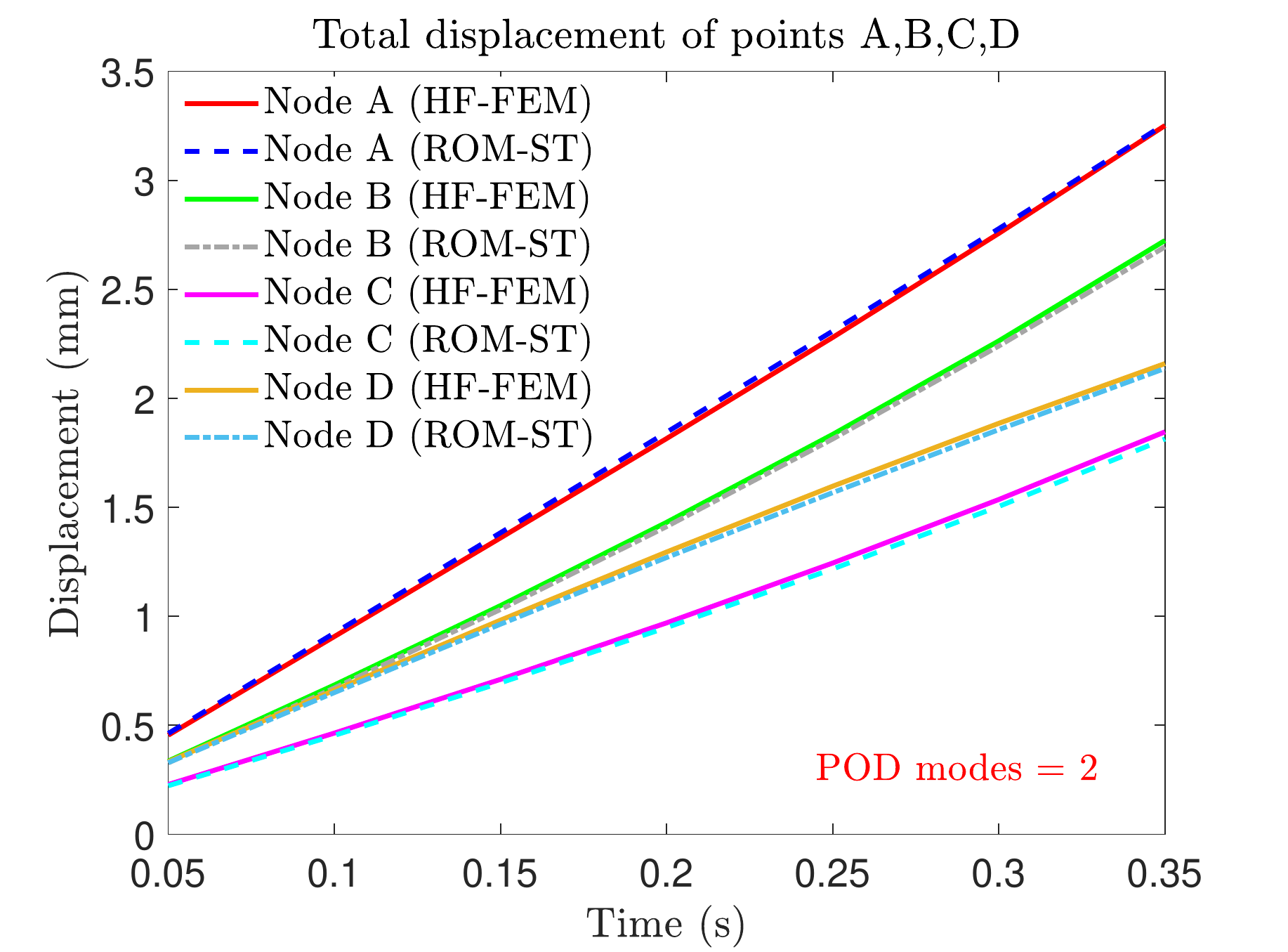}
}
\caption{Comparison of the total displacement of selected nodes against the high-fidelity FEM solution;  training points $\lambda \in \Lambda_t = \{0.1, 0.5, 0.9 \}$;  reference parameter value $\lambda_{i_0}=0.5$;  target point $\widetilde{\lambda}=0.3$; POD modes $p=2$.}
\label{fig:Nodal_time_displacement_histories}
\end{figure}

\begin{figure}[H]
\resizebox{0.60\textwidth}{!}{%
\includegraphics{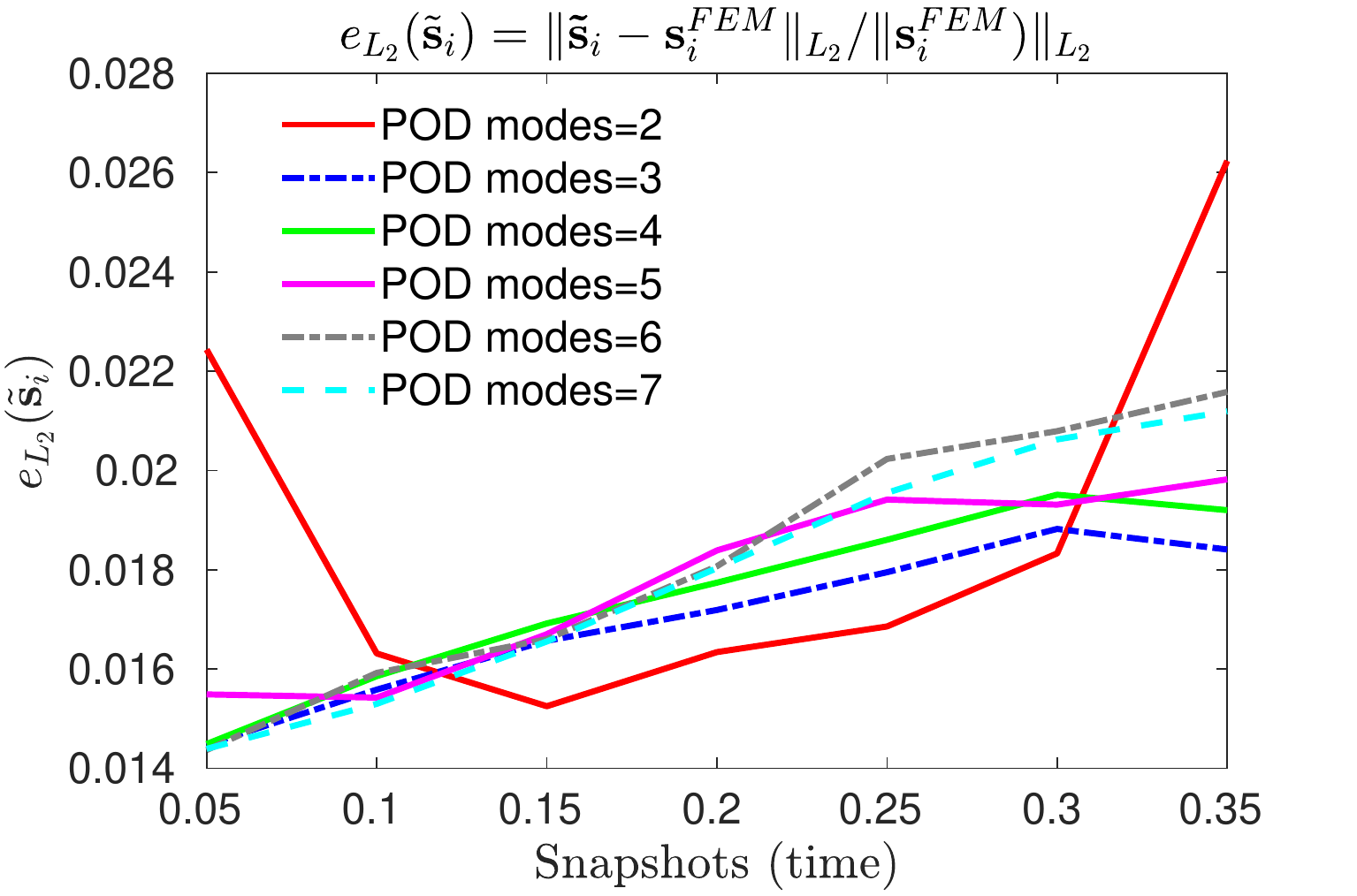}
}
\caption{Performance of ST POD interpolation using the relative $L_2$-error norm  $e_{L_2}(\widetilde{\bs}_i)$ for various POD modes $p$;  
training points $\lambda \in \Lambda_t = \{0.1, 0.5, 0.9 \}$; reference parameter value $\lambda_{i_0}=0.5$;  target point $\widetilde{\lambda}=0.8$.}
\label{fig:L2_norm_POD_modes_new_interp_target_m_08}
\end{figure}

\begin{figure}[H]
\begin{subfigure}{.48\textwidth}
\centering
\includegraphics[width=\textwidth]{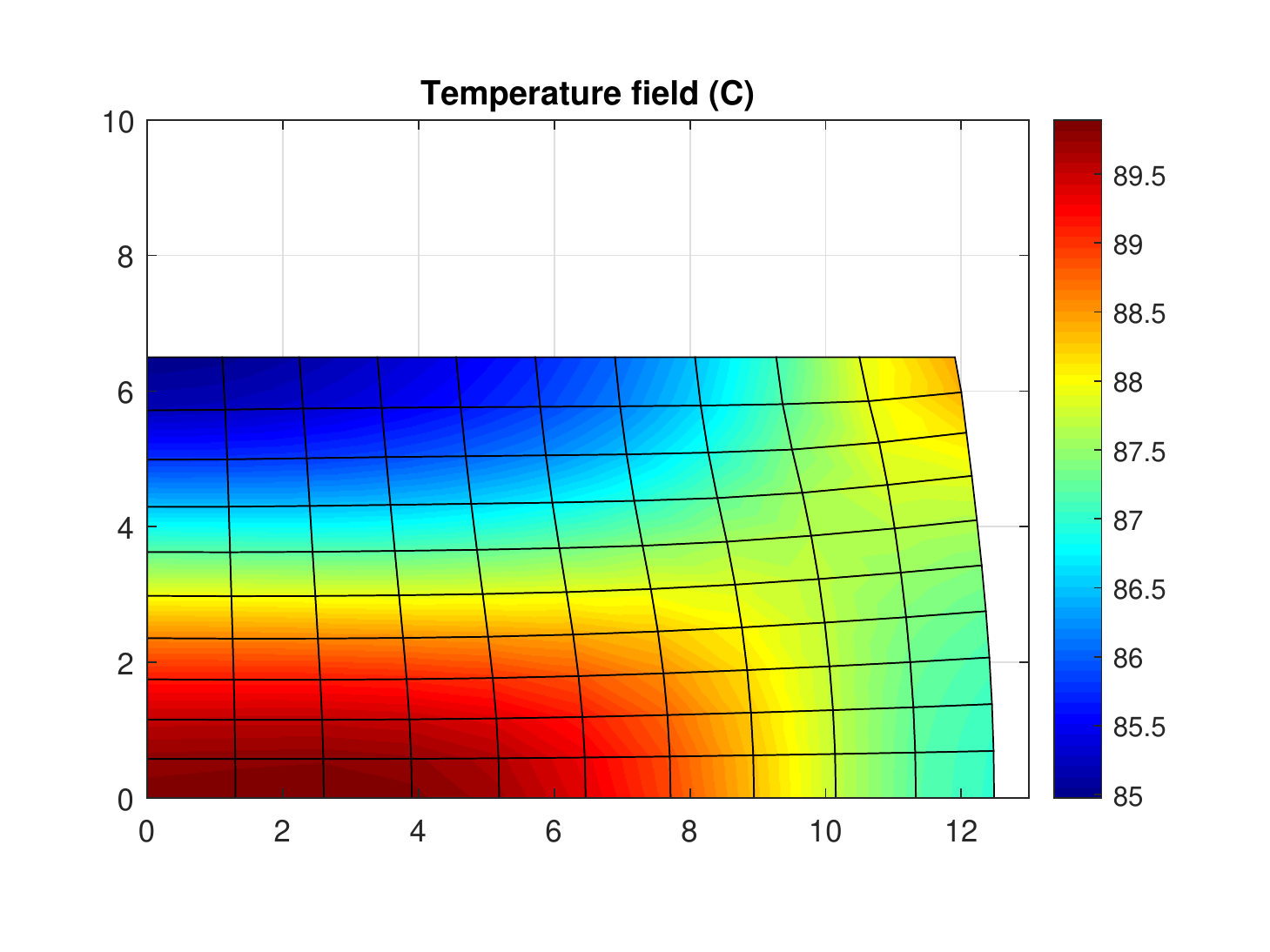}
\vspace{-2\baselineskip}
\caption{For $m = 0.1$}
\label{fig:friction_01}
\end{subfigure}
\begin{subfigure}{.48\textwidth}
\centering
\includegraphics[width=\textwidth]{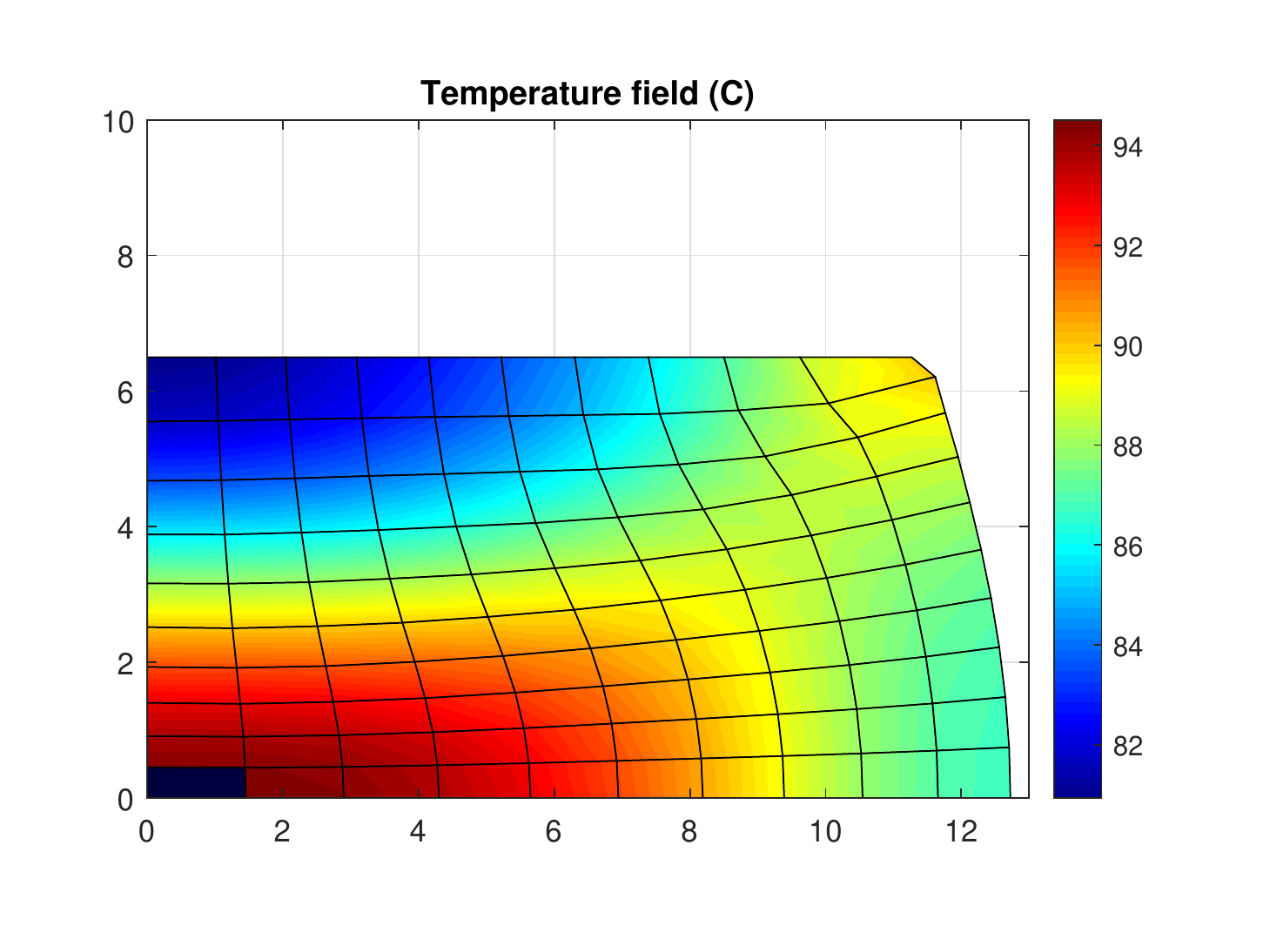}
\vspace{-2\baselineskip}
\caption{For $m = 0.3$}
\label{fig:friction_03}
\end{subfigure}
\begin{subfigure}{.48\textwidth}
\centering
\includegraphics[width=\textwidth]{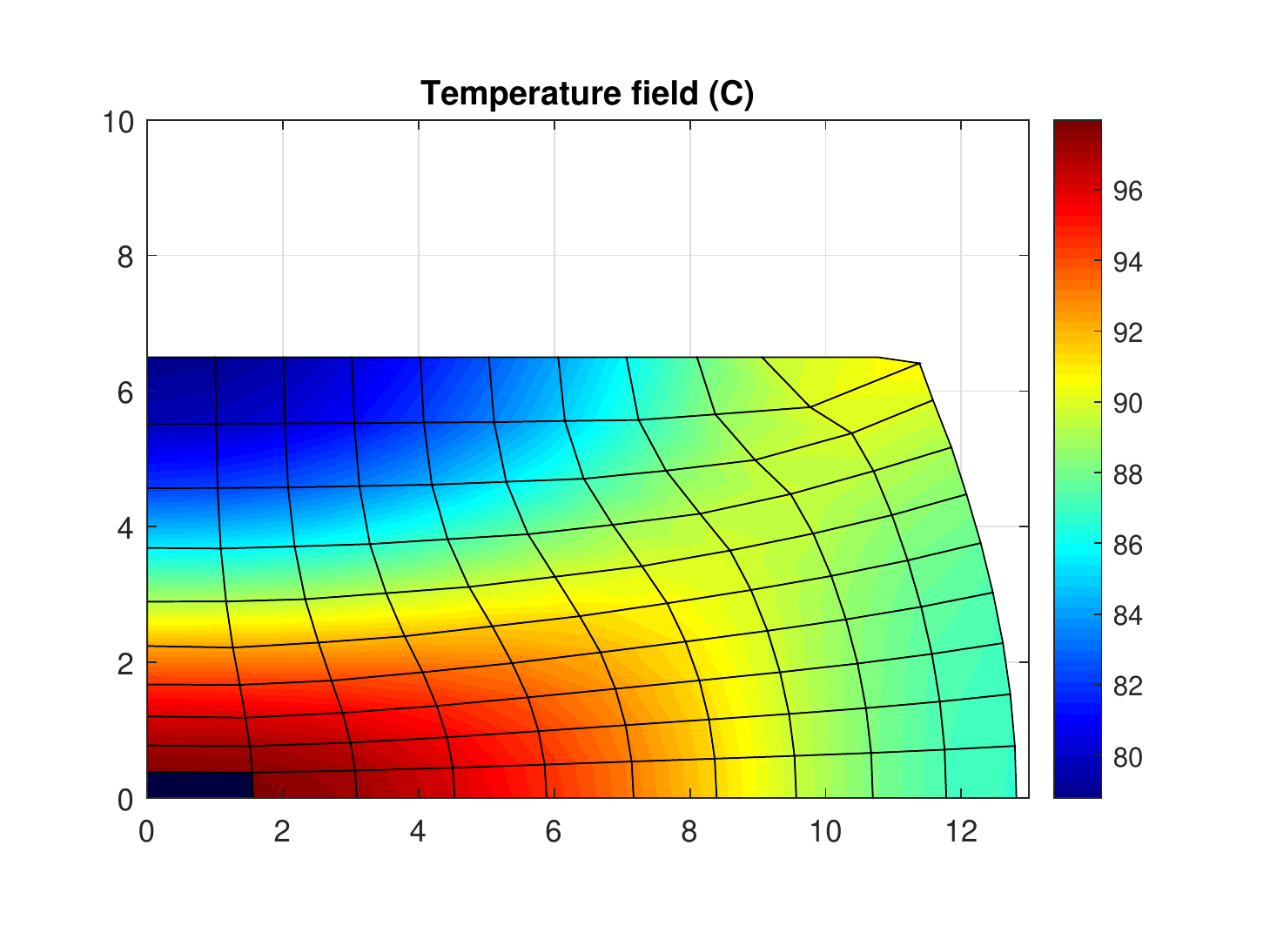}
\vspace{-2\baselineskip}
\caption{For $m = 0.5$}
\label{fig:friction_05}
\end{subfigure}
\begin{subfigure}{.48\textwidth}
\centering
\includegraphics[width=\textwidth]{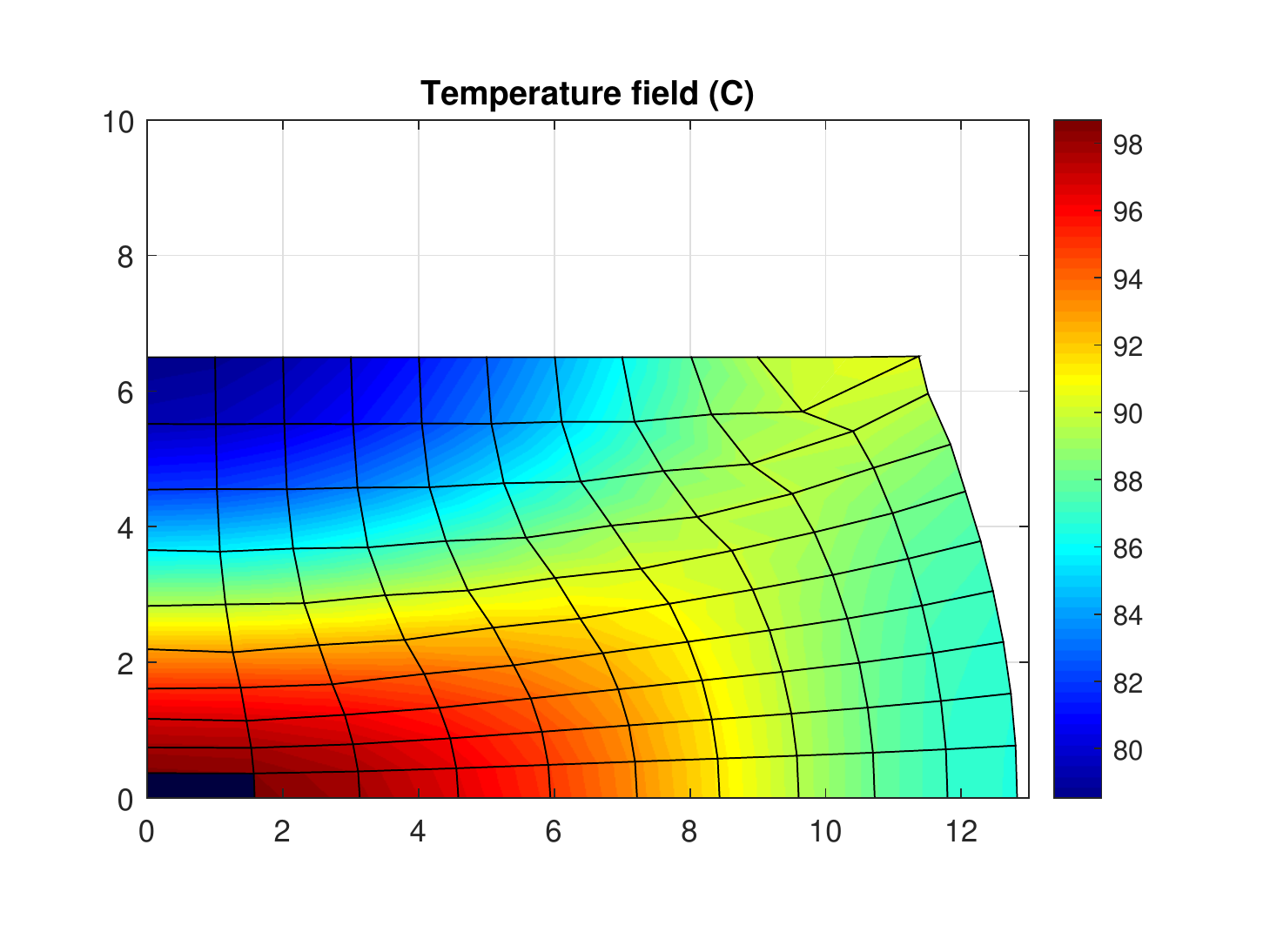}
\vspace{-2\baselineskip}
\caption{For $m = 0.9$}
\label{fig:friction_09}
\end{subfigure}
\caption{Temperature profiles at the final compression state $t=0.35$ s obtained using different values of the shear friction factor $m$ represented by parameter $\lambda$.}
\label{fig:Temperature_profiles}
\end{figure}

\begin{figure}[H]
\resizebox{0.60\textwidth}{!}{%
\includegraphics{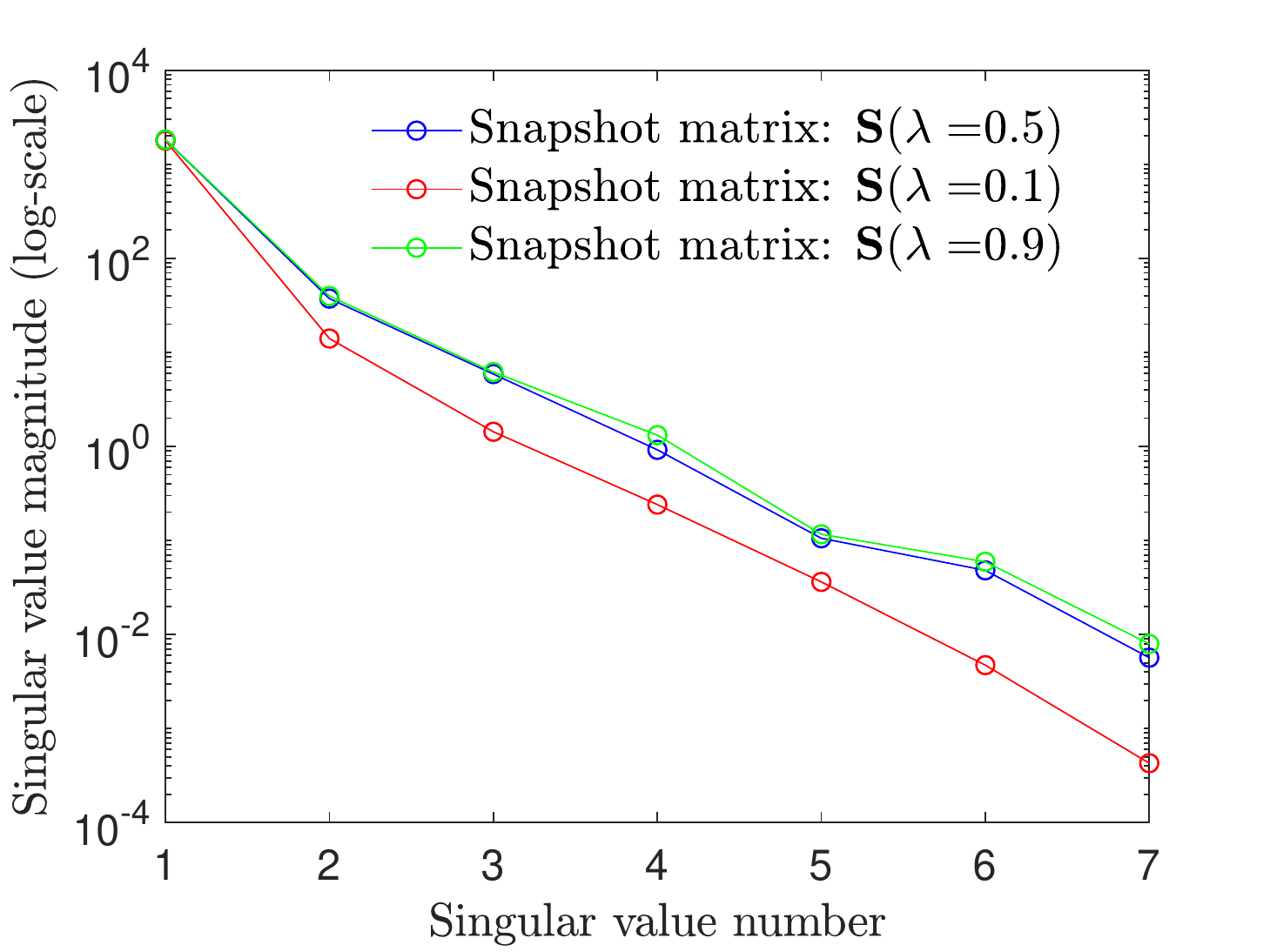}
}
\caption{The eigenvalue spectrum of snapshot matrices $\bS^{(i)}$ corresponding to training points $\lambda \in \Lambda_t = \{0.1, 0.5, 0.9 \}$.}
\label{fig:Sing_values_magnitude_temperature}    
\end{figure}

\begin{figure}[H]
\resizebox{0.60\textwidth}{!}{%
\includegraphics{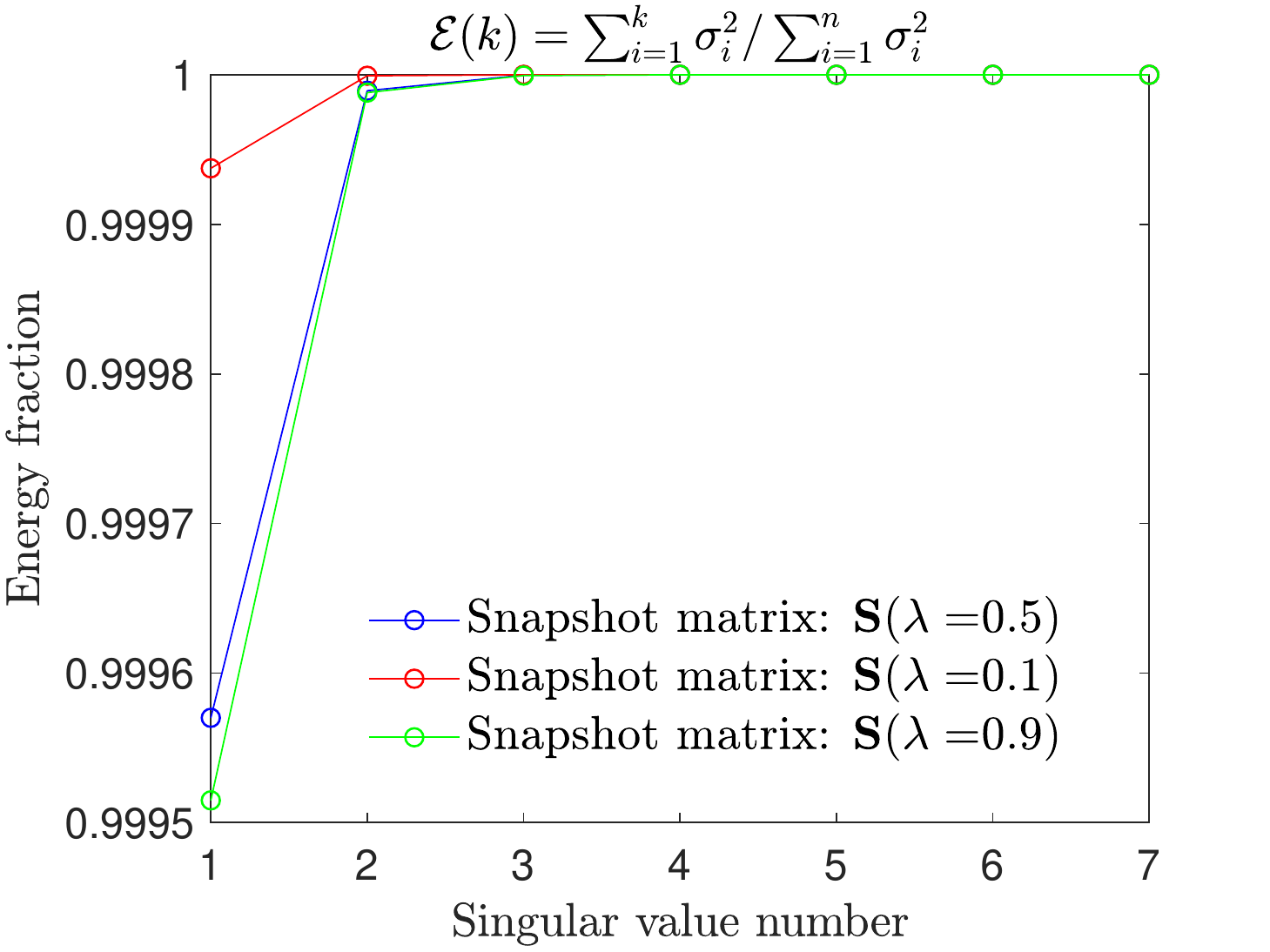}
}
\caption{Energy captured by the singular values of snapshot matrices $\bS^{(i)}$ corresponding to training points $\lambda \in \Lambda_t = \{0.1, 0.5, 0.9 \}$.}
\label{fig:Energy_SVD_temperature}  
\end{figure}

\begin{figure}[H]
\resizebox{0.60\textwidth}{!}{%
\includegraphics{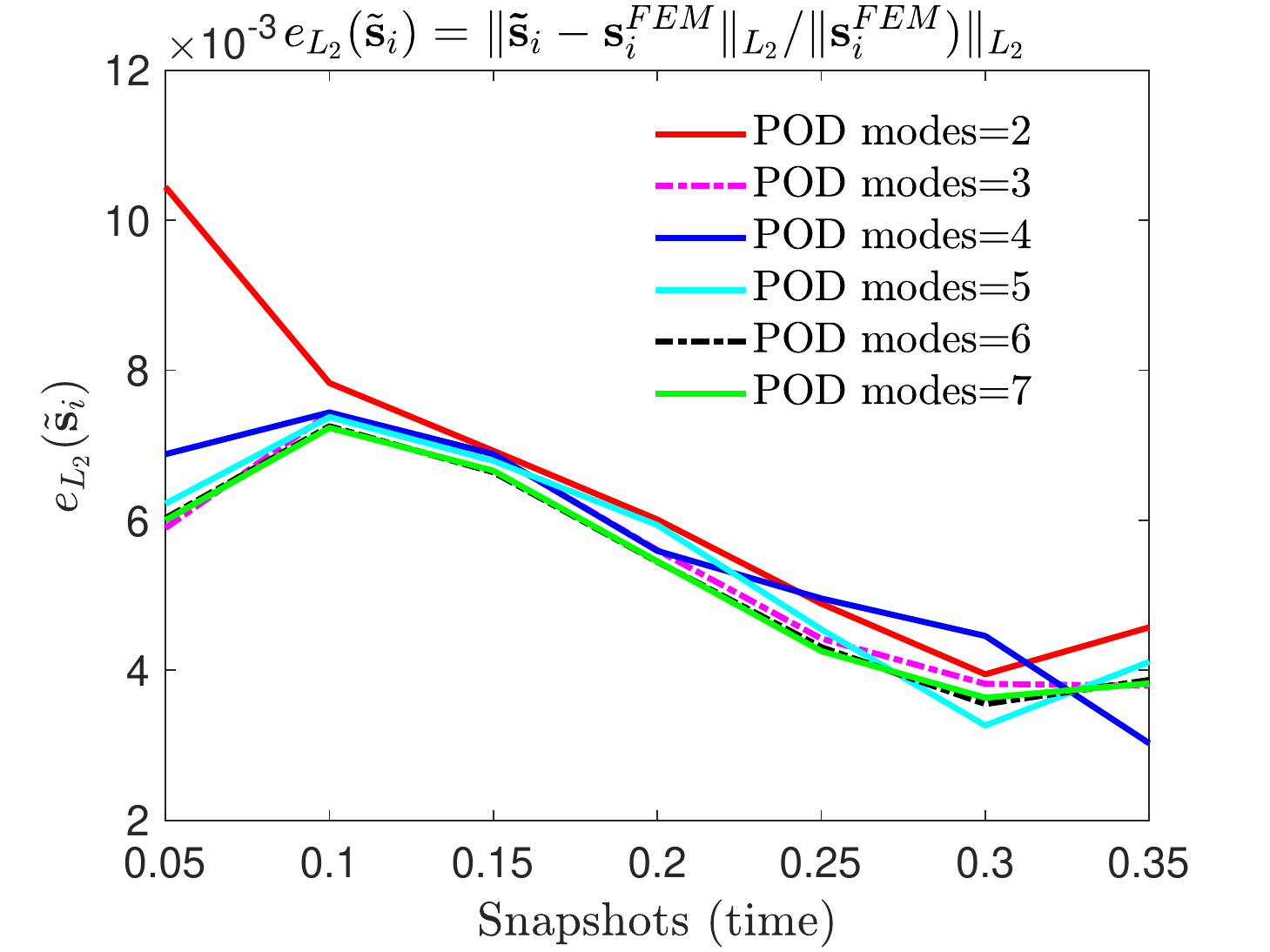}
}
\caption{Performance of ST POD interpolation using the relative $L_2$-error norm  $e_{L_2}(\widetilde{\bs}_i)$ for various POD modes $p$; training points $\lambda \in \Lambda_t = \{0.1, 0.5, 0.9 \}$; reference parameter value $\lambda_{i_0}=0.5$;  target point $\widetilde{\lambda}=0.3$.}
\label{fig:L2_norm_POD_modes_temperature}
\end{figure}

\begin{figure}[H]
\resizebox{0.60\textwidth}{!}{%
\includegraphics{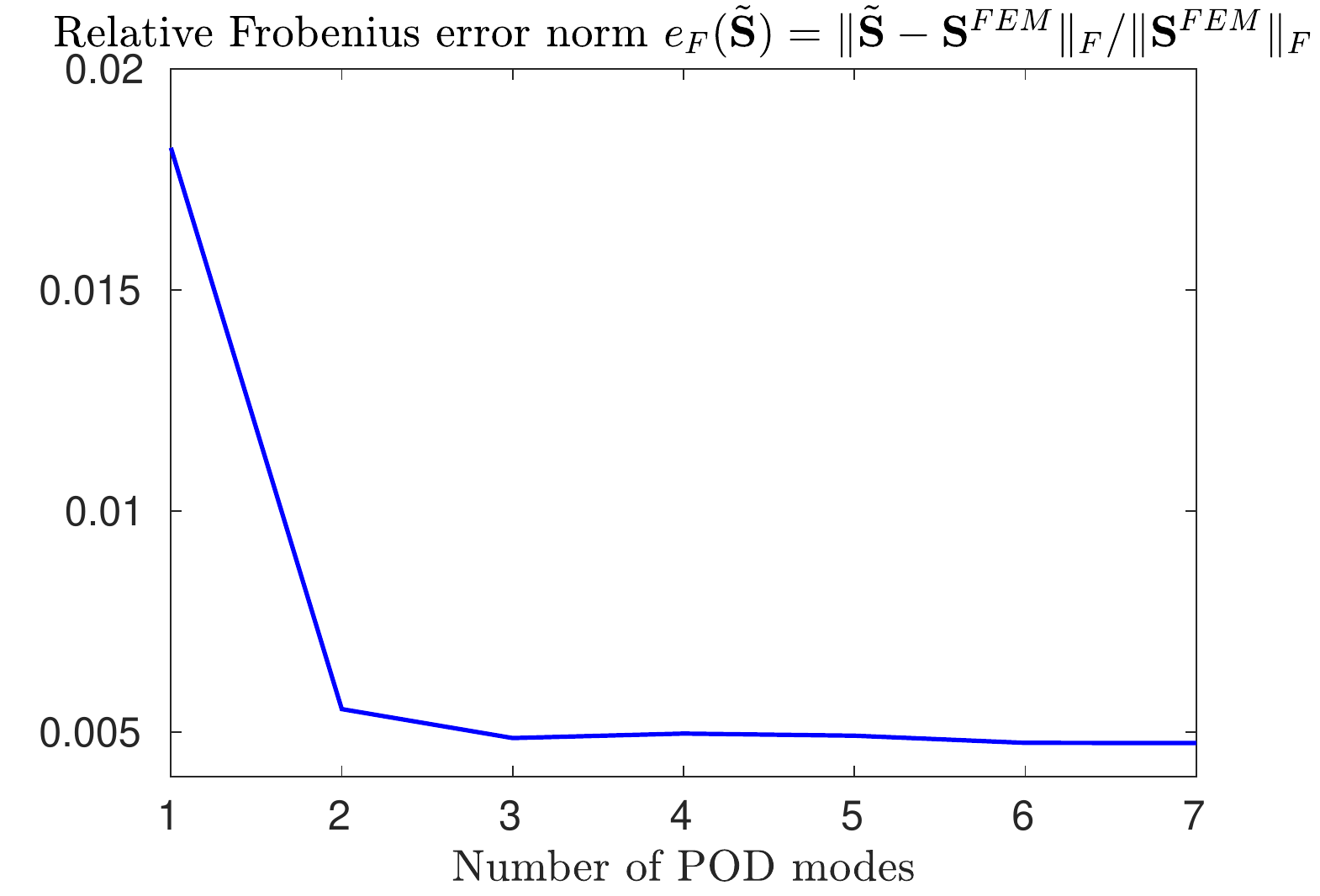}
}
\caption{Performance of the POD interpolation using the relative Frobenius error norm $e_{F}(\widetilde{\bS})$ against the number of POD modes $p$; training points $\lambda \in \Lambda_t = \{0.1, 0.5, 0.9 \}$; reference parameter value $\lambda_{i_0}=0.5$;  target point $\widetilde{\lambda}=0.3$.}
\label{fig:Frobenius_error_norm_temperature}
\end{figure}

\begin{figure}[H]
\resizebox{0.7\textwidth}{!}{%
\includegraphics{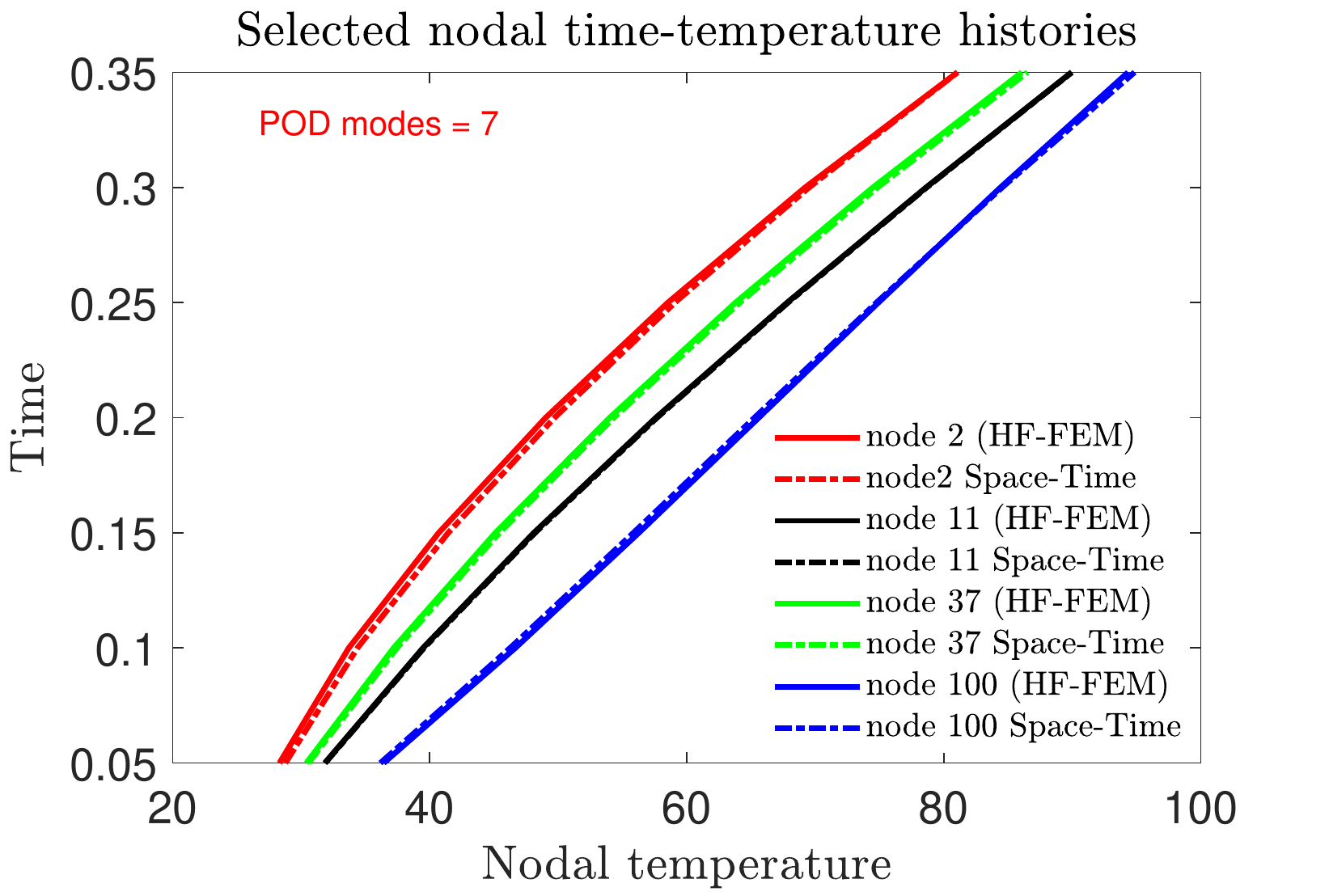}
}
\caption{Temperature evolution of selected nodal points validated against the high-fidelity FEM solution; ST POD and HF-FEM solutions virtually coincide;  training points $\lambda \in \Lambda_t = \{0.1, 0.5, 0.9 \}$; reference parameter value $\lambda_{i_0}=0.5$; target point $\widetilde{\lambda}=0.3$; POD modes $p=7$.}
\label{fig:Nodal_time_temperature_histories}
\end{figure}

\pagebreak

%

\end{document}